\newenvironment{AB}{\color{red}}{\color{black}}
\newenvironment{pp}{\color{blue}}{\color{black}}
\theoremstyle{plain}
\newtheorem{theorem}{Theorem}[section]
\newtheorem*{theorem*}{Theorem}
\newtheorem{proposition}[theorem]{Proposition}
\newtheorem{corollary}[theorem]{Corollary}
\newtheorem{lemma}[theorem]{Lemma}
\theoremstyle{definition}
\newtheorem{definition}[theorem]{Definition}
\newtheorem{notation}[theorem]{Notation}
\newtheorem{remark}[theorem]{Remark}
\newtheorem{example}[theorem]{Example}
\newtheorem{claim}{Claim}[theorem]
\title{Identifiability of rank-3 tensors}
\date{}
\author{Edoardo Ballico \thanks{edoardo.ballico@unitn.it}}
\author{Alessandra Bernardi \thanks{alessandra.bernardi@unitn.it}}
\author{Pierpaola Santarsiero \thanks{p.santarsiero-1@unitn.it}}
\affil{Dipartimento di Matematica, Univ. Trento, Italy}
\begin{document}
\maketitle

\begin{abstract}
Rank-2 and rank-3 tensors are almost all identifiable with only few exceptions. We classify them all together with the dimensions and the structures of all the sets evincing the rank. 
\end{abstract}
\textbf{MSC2020:} 14N07, 15A69.
\section*{Introduction}

Identifiability of tensors is one of the most active research areas both in pure mathematics and in applications. The core of the problem is being able to understand if a given tensor $T\in \mathbb{C}^{n_1+1} \otimes \cdots \otimes \mathbb{C}^{n_k+1}$ can be decomposed in a unique way as a sum of pure tensors:
$$T=\sum_{i=1}^r v_{1,i}\otimes \cdots \otimes v_{k,i},$$
with $v_{j,i}\in \mathbb{C}^{n_j+1}$, for $j=1, \ldots , k$. Of course the minimum $r$ realizing the above expression is a crucial value and it is called the \emph{rank} of $T$.

From the applied point of view, identifiability in tensor decomposition arises naturally in numerous areas, we quote as examples Phylogenetics, Quantum Physics, Complexity Theory and Signal Processing  (cf. eg. \cite{ar, orus, quantumgeo, hjn, bv, bercar, bcs, Land2, rlz, DDL1, DDL2, DDL3, DDL4}).

From the pure mathematical point of view, being able to understand if a tensor is identifiable is a very elegant problem that goes back to Kruskal \cite{Kruskal} and finds more modern contributions with the language of Algebraic Geometry and Multilinear Algebra in eg. \cite{cc1, cc2, C.O., cov, cov2, id1, id2, gm, mms, cm, acm, acv, bbc, bbcc, bccgo}. Except for very few contributions \cite{Kruskal, DDL1, js} which work  for certain specific classes of given tensors, all the others regard the identifiability of generic tensors of certain rank. From the computational point of view, as far as we know, the unique algorithm dealing with the identifiability of any given tensor is a numerical one developed in Bertini \cite{bertini} in \cite{bdhm}.

Dealing with tensors of given rank $r$ brings the problem into the setting of 
$r$-th secant varieties of Segre varieties (cf. Definition \ref{secant}) namely the closure (either Zariski or Euclidean closures can be used for this definition if working over $\mathbb{C}$) of the set of tensors of rank smaller or equal than $r$. 
Knowing if a generic tensor of certain rank is identifiable gives an indication regarding the behaviour of specific tensors of the same rank. Namely, the dimension of the set $\mathcal{S}(Y,T)$ of rank-1 tensors computing the rank of a specific tensor $T$ (cf. Definition \ref{SYq}) can only be bigger or equal than the dimension of $\mathcal{S}(Y,q)$ where $q$ is a generic tensor of rank equal to the rank of $T$ (this will be explained in Remark \ref{allP14} for the specific case of rank-3 tensors $T\in (\mathbb{C}^2)^{\otimes 4}$, but it is a well known general fact for which we refer \cite[Cap II, Ex 3.22, part (b)]{Hart}). Since the cases in which generic tensors of fixed rank are not-identifiable are rare (cf. eg. \cite{bccgo, cm, gm, lf, id1, CGGrank,  CGGtecnica, CGG, C.O.}), the knowledge of generic tensors' behaviour doesn't help in all the applied problems where the ken  of a specific tensor modeling certain precise samples is required. 

In the present manuscript we present a systematic study of the identifiability of a given tensor starting with those of ranks 2 and 3. We give a complete classification of these first cases: we describe the structures  and the dimensions of all the sets evincing the rank. In terms of generic tensors of rank either 2 or 3, everything was already well known form \cite{AOP, BalBer3, CGGrank, CGG, C.O., cov, Kruskal, CMR}. What it was missing was the complete classification for all the tensors of those ranks.

In Proposition \ref{sigma20} we show that rank-2 tensors $T$ are always identifiable except if $T$ is a $2\times 2$ matrix.
Our \textbf{main Theorem \ref{main_theorem}} states that a rank-3 tensor $T$ is identifiable except if
\begin{enumerate}
    \item $T$ is a $3\times 3$ matrix and $\dim(\mathcal{S}(Y,T))=6$;
    
    \item there exist $v_1,v_2,v_3\in \mathbb{C}^2$ s.t. $T\in \mathbb{C}^2\otimes v_2 \otimes v_3+ v_1 \otimes \mathbb{C}^2 \otimes v_3+ v_1 \otimes v_2\otimes \mathbb{C}^2$ and $\dim(\mathcal{S}(Y,T))\geq 2$;  
    
    \item  $T\in (\mathbb{C}^2)^{\otimes 4}$ and $\dim(\mathcal{S}(Y,T))\geq 1$;
    
    \item  $T\in\mathbb{C}^3\otimes \mathbb{C}^2\otimes \mathbb{C}^2$ and it is constructed as in Example \ref{caso3}. 
    In this case $\dim(\mathcal{S}(Y,T))=3$;
    
    \item $T\in\mathbb{C}^3\otimes \mathbb{C}^2\otimes \mathbb{C}^2$ and it is constructed as in Example \ref{caso4}. 
    In this case $\mathcal{S}(Y,T)$ contains two different 4-dimensional families;
    
    \item $T \in \mathbb{C}^{m_1}\otimes \mathbb{C}^{m_2}\otimes (\mathbb{C}^2)^{k-2} $, where $k\geq 3$ and $m_1,m_2\in \{2,3 \} $. In this case $\dim(\mathcal{S}(Y,T))\geq2$ and $ T$ is constructed as in Proposition \ref{x1.1}. If  $m_1+m_2+k \geq 6$ then $\dim(\mathcal{S}(Y,T))=2$.
 
\end{enumerate}

The paper is organized as follows. After the preliminary Section \ref{Prelim} where we introduce the notation and the main ingredients needed for the set up, we can immediately show the identifiability of rank-2 tensors in Section \ref{Sec:id:2}. In Section \ref{sec.ex} we explain in details the examples where the non-identifiability of a rank-3 tensor arises. In Sections \ref{section:5points} and \ref{section:6points} we show that the examples of the previous section are the only possible exceptions to non-identifiability of a rank-3 tensor. Section \ref{sec.last} is actually devoted to collect all the information needed (but actually already proved at that stage) to conclude the proof of our main Theorem \ref{main_theorem}.

\section{Preliminaries and Notation}\label{Prelim}

We will always work over an algebraically closed field $\mathbb{K}$ of characteristic $0$.

\begin{definition}
Let $X \subset \mathbb{P}^N $ be a non-degenerate projective variety, the $X $-\emph{rank} $r_{X}(q)$  of a point $ q \in \langle X \rangle$ is the minimal cardinality of a finite set $S\subset X $ such that $q \in \langle S \rangle $. 
\end{definition}

\begin{notation}
Let $A\subset \mathbb{P}^N$ be any subset. With an abuse of notation we denote by $\langle A \rangle$ the projective space spanned by $A$.

\end{notation}

Let $V_1, \ldots , V_k$ be vectors spaces of 
dimension $n_1+1, \ldots, n_k+1$ respectively, the \emph{Segre variety} is the image of the following embedding: $$\nu: \mathbb{P}(V_1)\times \cdots \times \mathbb{P}(V_k) \to  \mathbb{P}(V_1 \otimes \cdots \otimes V_k)$$
$$([v_1], \ldots  , [v_k])\mapsto [v_1 \otimes \cdots \otimes v_k]$$
%with $N= \prod_{i=1}^k (n_i+1) -1 $.

\begin{notation}\label{pi}We denote by $ Y$ the multiprojective space $$Y:=\mathbb{P}^{n_1}\times \cdots \times \mathbb{P}^{n_k}$$ and by $X$ the image of $ Y$ via Segre embedding, i.e. $X=\nu(Y) $.
\\
We denote the projection on the $i $-th factor as $$\pi_i: Y \longrightarrow \mathbb{P}^{n_i} .$$
The space  corresponding to forget the $i $-th factor in the multiprojective space $Y $ is denoted by $Y_i$: $$Y_i:= \mathbb{P}^{n_1}\times \cdots \times \widehat{\mathbb{P}^{n_i}}\times \cdots \times \mathbb{P}^{n_k}.$$ 
With $\nu_i : Y_i \longrightarrow \mathbb{P}^{N'} $ we denote the corresponding Segre embedding, in particular $X_i:=\nu(Y_i)$.
\\
The projection on all the factors of $Y$ but the $i $-th one is denoted with $\eta_i$:
$$\eta_i : Y \longrightarrow Y_i .$$ Obviously all fibers of $\eta _i$ are isomorphic to $\mathbb{P}^{n_i}$.
\end{notation}

\begin{definition}\label{SYq}
For any $q \in \mathbb{P}^N $, $\mathcal{S}(Y,q) $  denotes   the set of all subsets $ A \subset Y$ such that $ \sharp(A) =r_X(q) $ and $q \in \langle \nu(A) \rangle $ and we will say that if $A \in \mathcal{S}(Y,q)$, then $A$ \emph{evinces} the rank of $q$. Moreover we say that $q\in \langle X \rangle$ is \emph{identifiable} if $\sharp\mathcal{S}(Y,q) =1$. 
\end{definition}

\begin{notation}\label{epsiloneco}
Sometimes we will also use the following multi-index notations: for $1 \leq i \leq k $, $\varepsilon_i=(0,\dots,0,1,0,\dots,0) $, where the only $1 $ is in the $i $-th place and $\hat{\varepsilon_i} $ which is a $k$-uple with all one's but the $i $-th place, which is filled by $0 $, i.e. $\widehat{\varepsilon_i}=(1, \dots, 1,0,1, \dots ,1) $. 
\end{notation}

\begin{definition} \label{secant}
The $r$-th secant variety  of $X $ is $\sigma_r(X):= \overline{\bigcup_{p_, \ldots, p_r\in X} \langle p_1 , \ldots , p_r}\rangle $
where the closure is the  the Zariski closure.  The set of points of $X$-rank equal to $r$ is sometime denoted as $\sigma_r^0(X)$. 
If  $\dim \sigma_r(X)< \min \{rn+r-1, \dim \langle X \rangle\}$, the variety $X$ is said to be \emph{$r$-defective}, otherwise $X$ is $r$-regular. If $X$ is $r$-defective, the difference $\delta_r= \min \{rn+r-1, \dim \langle X \rangle\}- \dim \sigma_r(X)$ is called the \emph{$r$-th secant defect of $X$}.
\end{definition}

We will often use the so called Concision/Autarky property (cf. \cite[Prop. 3.1.3.1]{Lands} \cite[Lemma 2.4]{bb3}) that we recall here.

\begin{lemma}[Concision/Autarky]\label{concision}
For any $q \in \mathbb{P}(V_1 \otimes \cdots \otimes V_k) $, there is a unique minimal multiprojective space $Y'\simeq \mathbb{P}^{n'_1}\times \cdots \times \mathbb{P}^{n'_k}  \subseteq Y\simeq \mathbb{P}^{n_1}\times \cdots \times \mathbb{P}^{n_k} $ with $n'_i \leq n_i$, $i=1 , \ldots, k$ such that $\mathcal{S}(Y,q)=\mathcal{S}(Y',q) $. 

\end{lemma}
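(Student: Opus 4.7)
The plan is to build the minimal $Y' \subseteq Y$ explicitly from the flattening data of $q$, and then verify both inclusions of $\mathcal{S}(Y,q) = \mathcal{S}(Y',q)$ separately. For each index $i$, I would consider the contraction map
$$
\phi_i \colon (V_1 \otimes \cdots \otimes \widehat{V_i} \otimes \cdots \otimes V_k)^{\ast} \longrightarrow V_i
$$
obtained by pairing $q$ against a linear form on the other factors, and set $W_i := \mathrm{Im}(\phi_i) \subseteq V_i$. A short argument shows that $W_i$ is the unique smallest subspace of $V_i$ with the property that $q$ lies in $V_1 \otimes \cdots \otimes W_i \otimes \cdots \otimes V_k$: if another $W \subseteq V_i$ has this property, pick a complement $U$ of $W$ in $V_i$, use the induced direct-sum decomposition of the ambient tensor product, and observe that $\mathrm{Im}(\phi_i)$ must land inside $W$. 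Applying this across all $i$ and iterating yields $q \in W_1 \otimes \cdots \otimes W_k$, so $Y' := \mathbb{P}(W_1) \times \cdots \times \mathbb{P}(W_k)$ is a well-defined multiprojective subvariety of $Y$.

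The inclusion $\mathcal{S}(Y',q) \subseteq \mathcal{S}(Y,q)$ is immediate once one checks that $r_X(q) = r_{\nu(Y')}(q)$. The heart of the lemma is the autarky direction: every $A \in \mathcal{S}(Y,q)$ must already lie inside $Y'$. Writing $A = \{[a^{(j)}]\}_{j=1}^{r}$ with $a^{(j)} = ([v_1^{(j)}], \dots, [v_k^{(j)}])$ and $q = \sum_{j} v_1^{(j)} \otimes \cdots \otimes v_k^{(j)}$, choose complements $V_i = W_i \oplus U_i$ and split $v_i^{(j)} = w_i^{(j)} + u_i^{(j)}$. Expanding $q$ multilinearly produces a decomposition indexed by subsets $S \subseteq \{1, \dots, k\}$ (telling which slots use the $U$-part); since $q \in W_1 \otimes \cdots \otimes W_k$, every block with $S \neq \emptyset$ must vanish, and the $S = \emptyset$ block recovers the identity $q = \sum_j w_1^{(j)} \otimes \cdots \otimes w_k^{(j)}$. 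Minimality of $r$ then forces each $w_i^{(j)} \neq 0$, since otherwise one could delete a term and exhibit $q$ as a sum of fewer than $r$ rank-$1$ tensors.

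What remains is to conclude, from the vanishing of the singleton blocks
$$
\sum_{j} w_1^{(j)} \otimes \cdots \otimes u_i^{(j)} \otimes \cdots \otimes w_k^{(j)} = 0 \qquad (i = 1, \dots, k),
$$
that every $u_i^{(j)}$ is in fact zero, so that $v_i^{(j)} \in W_i$ for all $i, j$ and hence $A \subseteq Y'$. In the matrix case $k = 2$ this is immediate: the $w_2^{(j)}$ form a basis of $W_2$, so $\sum_j u_1^{(j)} \otimes w_2^{(j)} = 0$ gives $u_1^{(j)} = 0$, and symmetrically for the other factor. For $k \geq 3$ the linear independence of the $w$'s is no longer available, and this is the point I expect to be the main obstacle: one must use simultaneously the vanishing of all non-empty blocks together with global rank-minimality of $r$ to rule out any compensating pattern of $u$-parts. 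Once autarky is established, uniqueness of the minimal $Y'$ follows from the minimality characterization of each $W_i$: any $Y'' = \mathbb{P}(W_1') \times \cdots \times \mathbb{P}(W_k') \subseteq Y$ with $\mathcal{S}(Y,q) = \mathcal{S}(Y'',q)$ satisfies $q \in W_1' \otimes \cdots \otimes W_k'$, so $W_i \subseteq W_i'$ for each $i$, and therefore $Y' \subseteq Y''$.
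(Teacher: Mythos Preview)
The paper does not supply its own proof of this lemma: it is quoted as a known fact with references to \cite[Prop.~3.1.3.1]{Lands} and \cite[Lemma~2.4]{bb3}. So there is no in-paper argument to compare against, and your proposal must stand on its own.

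Your construction of the $W_i$ via the flattenings $\phi_i$ and the minimality/uniqueness argument for $Y'$ are correct and standard. The genuine gap is exactly where you flag it: in the autarky step you expand \emph{all} factors simultaneously, arrive at the singleton-block relations
\[
\sum_{j} w_1^{(j)}\otimes\cdots\otimes u_i^{(j)}\otimes\cdots\otimes w_k^{(j)}=0,
\]
and then concede that for $k\ge 3$ you do not know how to force $u_i^{(j)}=0$ from these. That is not a proof; it is an acknowledged obstacle left open.

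The obstacle disappears if you treat one factor at a time rather than all at once. Fix $i$ and apply only the splitting $V_i=W_i\oplus U_i$, keeping the other factors untouched. Since $q\in V_1\otimes\cdots\otimes W_i\otimes\cdots\otimes V_k$, projecting to the $U_i$-component gives
\[
\sum_{j} u_i^{(j)}\ \otimes\!\!\bigotimes_{l\ne i} v_l^{(j)}\;=\;0,
\]
with the \emph{original} vectors $v_l^{(j)}$ in the remaining slots. Now rank-minimality forces the tensors $\bigl\{\bigotimes_{l\ne i} v_l^{(j)}\bigr\}_{j=1}^{r}$ to be linearly independent: a nontrivial relation $\sum_j\lambda_j\bigotimes_{l\ne i} v_l^{(j)}=0$ with, say, $\lambda_r\ne 0$ lets you substitute for the $r$-th term and rewrite
\[
q=\sum_{j<r}\Bigl(v_i^{(j)}-\tfrac{\lambda_j}{\lambda_r}\,v_i^{(r)}\Bigr)\otimes\bigotimes_{l\ne i} v_l^{(j)},
\]
a decomposition of length $\le r-1$, contradicting $r_X(q)=r$. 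With this independence in hand, the displayed relation immediately gives $u_i^{(j)}=0$ for every $j$, hence $v_i^{(j)}\in W_i$. Running $i$ over $1,\dots,k$ yields $A\subseteq Y'$, and the rest of your outline (equality of ranks, uniqueness of $Y'$) then goes through as written.
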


\begin{definition}{(Concise Segre)} Given a point $q \in \mathbb{P}^N  $, we will call \emph{concise Segre} 
the variety $X_q:=\nu(Y')$ where $Y' \subseteq Y$ is the minimal multiprojective space $Y'\subseteq Y $ such that $q \in \langle\nu(Y') \rangle $ as in Concision/Autarky Lemma \ref{concision}.
\end{definition}

\begin{remark}\label{remarkconcision}
The minimal $ Y'$ defining the concise Segre of a point $q$ can be obtained as follows. Fix any $A \in \mathcal{S}(Y,q) $, set $A_i:= \pi_i(A) \subset \mathbb{P}^{n_i}$,  $i=1,\dots , k $, where the $\pi_i$'s are the projections on the $i$-th factor of Notation \ref{pi}. Each $\langle A_i \rangle \subseteq \mathbb{P}^{n_i} $ is a well-defined projective subspace of dimension at most $\min \{ n_i, r_X(q)-1 \} $. By Concision/Autarky we have $Y' = \prod_{i=1}^k \langle A_i \rangle$. 
 In particular $q $ does not depend on the $i$-th factor of $Y$ if and only if for one $A \in \mathcal{S}(Y,q) $ the set $\pi_i(A) $ is a single point.
\end{remark}

\begin{remark}\label{retta}
Let $ q \in \mathbb{P}^N$ and consider $A \in \mathcal{S}(Y,q) $. We claim that there is no line $L \subset X $ such that $\sharp(L \cap \nu(A) )\geq 2$. Obviously if $\sharp(L \cap \nu(A) )>2$ we would have at least $3 $ points that evince the rank of $q $ on a line, which is a contradiction with the linearly independence property that sets in $ \mathcal{S}(Y,q)$ have. So assume that there exists a line $L\subset X$ such that $\sharp(L \cap \nu(A) )=2$; let $u,v \in A $ be the preimages of those points, i.e. $u\neq v $ and $\langle \nu(u), \nu(v)  \rangle = L$. Then $r_X(q) > 2 $ because if $r_X(q)=2 $ then we would have $q \in L \subset X $, so the rank of $q $ will be $ 1$. Let $E= A\setminus \{ u,v \} $. Then we will have that $q \in \langle \nu (E) \cup L \rangle $, so we can find a point $o \in L $ such that $q \in \langle \nu (E) \cup \{ o \} \rangle $, which would imply $ r_X(q)< \sharp A $.
\end{remark}

\subsection{A very useful lemma}

Let $X$ be a non degenerate irreducible projective variety embedded in $\mathbb{P}^N $ via an ample line bundle $ \mathcal{L}$. Let $ Z\subset X$ be a zero-dimensional scheme and let $ D\subset \mathbb{P}^N$ be a fixed hyperplane, i.e. $D\in \vert \mathcal{L}\vert $.
Denote with $ Res_D(Z)$  the \emph{residual scheme of $ Z$} with respect to $ D$, i.e. the zero-dimensional scheme whose defining ideal sheaf is $ \mathcal{I}_{Z}:\mathcal{I}_D$.  The ideal sheaf $\mathcal{I}_{D\cap Z,D}\otimes \mathcal{L}$ represents the scheme theoretic intersection of $ D$ and $ Z$, also called \emph{the trace of $ Z$} with respect to $ D$.
The residual exact sequence of $ Z$ with respect to $ D$ in $ X$ is the following:
$$ 0 \rightarrow \mathcal{I}_{Res_D(Z)}\otimes \mathcal{L}(-D) \rightarrow \mathcal{I}_Z \otimes \mathcal{L} \rightarrow \mathcal{I}_{D\cap Z,D}\otimes \mathcal{L} \rightarrow 0.$$

An extremely useful tool that will turn out to be crucial in many proofs of this paper is  \cite[Lemma 5.1]{BalBer2}. We recall here the analogous statement given in  \cite[Lemma 2.4]{BalBerChristGes}   in the setting of zero-dimensional schemes.
\begin{lemma}[Ballico--Bernardi--Christandl--Gesmundo]\label{lemma: 5.1 plus}
 Let $X \subseteq \mathbb{P}^n$ be an irreducible variety embedded by the complete linear system associated with $\mathcal L = \mathcal O_X(1)$. Let $p\in \mathbb{P}^n$ and let $A,B$ be zero-dimensional schemes in $X$ such that $p \in \langle A \rangle$, $p \in \langle B \rangle$ and there are no $A' \subsetneq A$ and $B' \subsetneq B$ with $p \in \langle A'\rangle$ or $p \in \langle B' \rangle$. Suppose $h^1(\mathcal{I}_{B}(1)) = 0$. Let $C \subseteq \mathbb{P}^n$ be an effective Cartier divisor such that $\mathrm{Res}_C(A) \cap \mathrm{Res}_C(B) = \emptyset$. If $h^1 \big(X, \mathcal{I}_{\mathrm{Res_C(A \cup B)}} (1) (-C)\big) = 0$ then $A \cup B \subseteq C$.
\end{lemma}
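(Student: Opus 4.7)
I would argue by contradiction. Suppose $A \cup B \not\subseteq C$, so that $E := \mathrm{Res}_C(A \cup B)$ is a nonempty subscheme of $X$; by the disjointness hypothesis $\mathrm{Res}_C(A) \cap \mathrm{Res}_C(B) = \emptyset$, it decomposes as the scheme-theoretic disjoint union $E = \mathrm{Res}_C(A) \sqcup \mathrm{Res}_C(B)$. The case $A = B$ is immediate, since then the disjointness forces $\mathrm{Res}_C(A) = \emptyset$, i.e.\ $A = B \subseteq C$, so one may assume $A \neq B$.

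The central ingredient is the twisted residual exact sequence
$$0 \to \mathcal{I}_{E}(1)(-C) \to \mathcal{I}_{A \cup B}(1) \to \mathcal{I}_{(A \cup B) \cap C,\, C}(1) \to 0,$$
whose associated long cohomology sequence, combined with the hypothesis $h^1(\mathcal{I}_E(1)(-C)) = 0$, yields an injection
$H^1(\mathcal{I}_{A \cup B}(1)) \hookrightarrow H^1(\mathcal{I}_{(A \cup B) \cap C,\, C}(1))$.
The plan is then to derive a contradiction by showing that the left-hand side is nonzero while the right-hand side vanishes. The left-hand nonvanishing $h^1(\mathcal{I}_{A \cup B}(1)) \geq 1$ comes from the joint minimality of $A$ and $B$ for $p$: via the identity $h^1(\mathcal{I}_Z(1)) = \deg Z - 1 - \dim\langle Z\rangle$, the fact that $p \in \langle A \rangle \cap \langle B \rangle$ is attained in two distinct minimal ways forces a nontrivial linear dependence among the conditions imposed by $A \cup B$ on $|\mathcal{O}(1)|$. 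In the reduced case the argument is the immediate linear-algebra observation that $p = \sum \lambda_i a_i = \sum \mu_j b_j$ (with all coefficients nonzero by minimality) gives a genuine relation among $A \cup B$; in the general scheme-theoretic setting one invokes the Mayer--Vietoris sequence $0 \to \mathcal{I}_{A \cup B} \to \mathcal{I}_A \oplus \mathcal{I}_B \to \mathcal{I}_{A \cap B} \to 0$ combined with the minimality constraints on $A$ and $B$.

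The right-hand vanishing comes from the hypothesis $h^1(\mathcal{I}_B(1)) = 0$: the analogous residual sequence for $B$ alone controls $H^1$ of the trace $B \cap C$ on $C$, and this control is propagated to the trace of $A \cup B$ on $C$ using the disjointness of the residuals. \textbf{The main obstacle} is precisely this last propagation step: while $\mathrm{Res}_C(A) \sqcup \mathrm{Res}_C(B)$ is a disjoint union on $X$, the trace $(A \cup B) \cap C = (A \cap C) \cup (B \cap C)$ is generally not disjoint on $C$, and reconciling the cohomology in the possibly non-reduced scheme-theoretic setting is where the delicacy lies. The disjointness hypothesis on residuals is exactly what decouples the ``outside-$C$ portions'' of $A$ and $B$, allowing the $h^1$-vanishing for $B$ to transfer to $A \cup B$ once restricted to $C$, and thereby closing the contradiction.
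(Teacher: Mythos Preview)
The paper does not prove this lemma; it is quoted verbatim from \cite{BalBerChristGes} (building on \cite[Lemma~5.1]{BalBer2}) and used as a black box. So there is no in-paper proof to compare against, and I will evaluate your argument on its own.

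Steps 1 and 2 are fine. The residual sequence does give the injection you state, and minimality of $A$ and $B$ together with $A\neq B$ does force $h^1(\mathcal{I}_{A\cup B}(1))>0$ (indeed, by Grassmann, $\dim\langle A\cup B\rangle\le \deg(A\cup B)-2$ once $p$ lies in $\langle A\rangle\cap\langle B\rangle$ but outside $\langle A\cap B\rangle$).

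The gap is Step 3. You want $h^1\big(C,\mathcal{I}_{(A\cup B)\cap C,\,C}(1)\big)=0$, but this is not what the hypotheses give you, and your sketch does not establish it. Two concrete obstructions: first, the residual sequence for $B$ alone yields $H^1(\mathcal{I}_B(1))\to H^1(C,\mathcal{I}_{B\cap C,C}(1))\to H^2(\mathcal{I}_{\mathrm{Res}_C B}(1)(-C))$, and you have no control over the $H^2$ term, so even the vanishing for $B\cap C$ is unclear; second, your ``propagation'' from $B\cap C$ to $(A\cup B)\cap C$ is not justified --- disjointness of the residuals says nothing about disjointness of the traces. More tellingly, your dichotomy cannot work as stated: when $A\cup B\subseteq C$ (the conclusion you are aiming for), the trace is all of $A\cup B$, the residual is empty so your injection still holds, and hence the right-hand $h^1$ is \emph{positive}. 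So ``RHS $=0$'' cannot be the mechanism that separates the two cases.

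What is actually used in the literature is finer than the bare inequality $h^1(\mathcal{I}_{A\cup B}(1))>0$: the relation $\sum_{a\in A}\lambda_a\,a-\sum_{b\in B}\mu_b\,b=0$ coming from the two minimal expressions of $p$ has \emph{nonzero coefficient on every point of $(A\cup B)\setminus(A\cap B)$}. Since $\mathrm{Res}_C(A)\cap\mathrm{Res}_C(B)=\emptyset$ forces $A\cap B\subseteq C$, every point $e$ of $E=\mathrm{Res}_C(A\cup B)$ appears with nonzero coefficient, hence $e\in\langle (E\setminus\{e\})\cup((A\cup B)\cap C)\rangle\subseteq \langle (E\setminus\{e\})\cup C\rangle$. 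But $h^1(\mathcal{I}_E(1)(-C))=0$ says precisely that for each $e\in E$ there is a hyperplane through $C$ and through $E\setminus\{e\}$ missing $e$; the previous line makes this impossible if $E\neq\emptyset$. That is the missing idea.
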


We rephrase it in terms of sets of points of multiprojective spaces embedded via $ \vert \mathcal{O}(1,\dots,1) \vert $. \\
Let $ k\geq 2$, let $Y=\mathbb{P}^{n_1}\times \cdots \times \mathbb{P}^{n_k} $ such that $X:=\nu(Y)\subset \mathbb{P}^N $, where $N=\prod(n_i+1)-1 $. Let $ q \in \mathbb{P}^N$ be a point of $ X$--rank $r$ and let $ A,B\in \mathcal{S}(Y,q) $ be sets of points evincing the rank of $ q$ and write $S:=A\cup B $. In this setting, the irreducible variety $ X$ considered in Lemma \ref{lemma: 5.1 plus} is the Segre variety. The residual scheme $Res_C(S) $ is therefore $S\setminus (S\cap C)$. The assumption $h^1(\mathcal{I}_B(1))=0 $ of \cite[Lemma 2.4]{BalBerChristGes}, in the setting of Segre varieties becomes $h^1(\mathcal{I}_B(1,\dots,1))=0 $, which means that the points of $ \nu(B)$ are linearly independent and this assumption is satisfied since both $ A$ and $ B$ are sets evincing the rank of $q$.

With all this said we can state the specific version of \cite[Lemma 2.4]{BalBerChristGes} and \cite[Lemma 5.1]{BalBer2} which is needed in the present paper.

\begin{notation}
With an abuse of notation, when will will make cohomology computation, if the variety for which we compute the cohomology of the ideal sheaf is $Y$ we will omit it. We will specify the variety only when it is not $Y$.
\end{notation}

 \begin{lemma}\label{lemmariformulato}Let $ k\geq 2$ and consider $Y=\mathbb{P}^{n_1}\times \cdots \times \mathbb{P}^{n_k} $, where all $n_i\geq 1 $. Let  $q\in \mathbb{P}^N$, $A,B\in \mathcal{S}(Y,q) $ be two different subsets evincing the rank of $ q$ and write $ S=A\cup B$. Let $D \in \vert \mathcal{O}_Y(\varepsilon) \vert  $ be an effective Cartier divisor such that $A\cap B \subset D $, where $ \varepsilon=\sum_{i \in I} \varepsilon_i$ for some $ I\subset \{ 1,\dots, k\}$ as introduced in Notation \ref{epsiloneco}. If $h^1(\mathcal{I}_{S\setminus S\cap D}(\hat{\varepsilon}))=0 $ then $S\subset D$.
\end{lemma}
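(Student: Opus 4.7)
The plan is to recognize the statement as essentially a verbatim specialization of the Ballico–Bernardi–Christandl–Gesmundo Lemma \ref{lemma: 5.1 plus} to the multiprojective/Segre setting, so that the proof amounts to checking that every hypothesis and conclusion translates correctly. I take the ambient irreducible variety to be $X=\nu(Y)\subset\mathbb{P}^N$, embedded by the complete linear system associated with $\mathcal{L}=\mathcal{O}_Y(1,\dots,1)$, and I take the Cartier divisor in Lemma \ref{lemma: 5.1 plus} to be $C=D$. Since $D\in|\mathcal{O}_Y(\varepsilon)|$, one has the crucial identification $\mathcal{L}(-D)=\mathcal{O}_Y(1,\dots,1)\otimes\mathcal{O}_Y(-\varepsilon)=\mathcal{O}_Y(\hat{\varepsilon})$, which will convert the cohomological hypothesis of the cited lemma into the one stated here.

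Next I verify the minimality and linear independence conditions. Since $A,B\in\mathcal{S}(Y,q)$ evince the rank of $q$, by definition no proper subset of $A$ (resp.\ of $B$) can span $q$ through $\nu$, which gives the minimality assumption. Moreover, because $\#A=\#B=r_X(q)$ and $q\in\langle\nu(A)\rangle\cap\langle\nu(B)\rangle$, the sets $\nu(A)$ and $\nu(B)$ consist of linearly independent points in $\mathbb{P}^N$; in particular $h^1(\mathcal{I}_B(1,\dots,1))=0$, as discussed in the paragraph preceding the statement.

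I then translate the residual scheme condition. For reduced zero-dimensional schemes, $\mathrm{Res}_D(A)=A\setminus(A\cap D)$ and similarly for $B$ and for $S=A\cup B$. The assumption $A\cap B\subset D$ yields $\mathrm{Res}_D(A)\cap\mathrm{Res}_D(B)=\emptyset$: indeed any $p$ in the intersection would lie in $A\cap B$ and hence in $D$, contradicting $p\notin D$. Also $\mathrm{Res}_D(A\cup B)=S\setminus(S\cap D)$, so the standing hypothesis $h^1(\mathcal{I}_{S\setminus S\cap D}(\hat{\varepsilon}))=0$ is exactly the condition $h^1(\mathcal{I}_{\mathrm{Res}_C(A\cup B)}(1)(-C))=0$ demanded by Lemma \ref{lemma: 5.1 plus}.

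With all hypotheses verified, Lemma \ref{lemma: 5.1 plus} applies and gives $A\cup B\subset D$, i.e.\ $S\subset D$. The only real point of care—and the one I would single out as the main obstacle—is keeping the twist bookkeeping straight, namely the identity $\mathcal{O}_Y(1,\dots,1)(-\varepsilon)=\mathcal{O}_Y(\hat{\varepsilon})$ together with the reduction of the scheme-theoretic residual $\mathrm{Res}_D(\,\cdot\,)$ to the set-theoretic complement for reduced $0$-dimensional data; once these are in place the conclusion is immediate.
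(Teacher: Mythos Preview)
Your proposal is correct and mirrors the paper's own treatment: the paper does not give a separate formal proof of this lemma but, in the discussion immediately preceding its statement, explains that it is a direct specialization of Lemma \ref{lemma: 5.1 plus} to the Segre setting, noting precisely the same translations you make (residual becomes set-theoretic complement, $h^1(\mathcal{I}_B(1,\dots,1))=0$ follows from $B$ evincing the rank, and $A\cap B\subset D$ guarantees disjoint residuals). Your additional remark making explicit the twist identity $\mathcal{O}_Y(1,\dots,1)(-\varepsilon)=\mathcal{O}_Y(\hat{\varepsilon})$ is a useful clarification that the paper leaves implicit.
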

The above lemma gives a sufficient condition so that the whole $ S=A\cup B$ is contained in a given divisor $ D$ of the variety $ X$.
If $ A,B$ are two disjoint distinct sets evincing the rank of a tensor $q$ of $ X$--rank $ 3$ the assumption that $A\cap B \subset D $ is always satisfied.

%We give an example of how to use Lemma \ref{lemmariformulato} in its contapositive form.
%\begin{example}
%Let $ Y=\mathbb{P}^2\times \mathbb{P}^2 \times \mathbb{P}^1$ be such that $X:=\nu(Y) $ is the concise Segre of a rank $ 3$ tensor $ q$. Let $A,B \in \mathcal{S}(Y,q) $ be two different subsets evincing the rank of $ q$ such that $\sharp A \cap B = 1 $ and call $ p$ \begin{pp}the point of intersection.\end{pp} Let $A':=A\setminus \{ p\} $, $B':=B\setminus \{ p\} $ \begin{pp} and denote by $ S:=A\cup B$\end{pp}. Let $M \in \vert \mathcal{O}_Y(1,0,0)\vert  $ \begin{pp}be the divisor\end{pp} containing $A' $, so in particular $M= \mathbb{P}^1 \times \mathbb{P}^2 \times \mathbb{P}^1 $ is still a multiprojective space. By Autarky we know that $S\not\subseteq M $.\\ Assume also that $p \in M $. By Lemma \ref{lemmariformulato} we get that $h^1(\mathcal{I}_{S\setminus S\cap M}(0,1,1))>0 $, where $ S\setminus S\cap M\subseteq B':=\{b_1,b_2\} $. So we get that $\pi_i(b_1)=\pi_i(b_2) $, for $i=2,3 $ which is a contradiction since by Autarky $\langle \pi_2(B) \rangle=\mathbb{P}^2 $ and $\pi_2(B) $ contains linearly independent points.
%\end{example}

\section{Identifiability on the 2-nd secant variety}\label{Sec:id:2}

In this section we study and completely determine the identifiability of points on the second secant variety of a Segre variety.

By Remark \ref{remarkconcision}, the concise Segre of a border rank-2 tensor $q$ is $X_q=\nu\left(\mathbb{P}^1_i\right)^{\times k}$.
Therefore for  the rest of this section we will  focus our attention to Segre varieties of products of $\mathbb{P}^1$'s.

\begin{remark}\label{2fattorisigma2} If the concise Segre $X_q$ of a tensor $q\in \sigma_2(X)$ is a $\nu( \mathbb{P}^{1}\times \mathbb{P}^{1}) $, then  $\sigma_2(X_q)$ parameterizes the $2 \times 2 $ matrices for which it  is trivial to see that they can be written as sum of two rank-$1 $ matrices in an infinite number of ways.
\end{remark}

For the rest of this section we will therefore focus on Segre varieties of $(\mathbb{P}^1)^{\times k}$ with $k\geq 3$. 

\begin{definition}
The variety $\tau(X)$ is the tangent developable of a projective variety $X$, i.e. $\tau(X)$ is  defined by the union of all tangent spaces to $X$.
\end{definition}

Recall that a tensor $q\in \tau(X) \setminus X $ has rank equal to $2$ if and only if the concise Segre $X_q$ of $q$ is a two-factors Segre, moreover it is not-identifiable for any number of factors (cf. eg. \cite[Remark 3]{BalBer3}).

\begin{proposition}\label{sigma20} Let $q \in \sigma_2^0(X) $. Then $| \mathcal{S}(Y,q)|> 1 $ if and only if the concise Segre $X_q$ of $q$ is  $X_q=\nu(\mathbb{P}^1 \times \mathbb{P}^1) $. 
\end{proposition}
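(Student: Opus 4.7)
The implication $(\Leftarrow)$ is immediate from Remark~\ref{2fattorisigma2}. For $(\Rightarrow)$, by Remark~\ref{remarkconcision} I may assume $X_q=\nu((\mathbb{P}^1)^k)$ with $k\geq 3$; suppose for contradiction that $A\neq B$ in $\mathcal{S}(Y,q)$, and set $S=A\cup B$. First I would rule out $A\cap B\neq\emptyset$: if $a\in A\cap B$ with $A=\{a,a'\}$ and $B=\{a,b\}$, the identities $q=\alpha\nu(a)+\beta\nu(a')=\alpha'\nu(a)+\beta'\nu(b)$ place $\nu(b)$ on the line $L=\langle\nu(a),\nu(a')\rangle$, so $L$ meets $X$ in three distinct points; because $X$ is cut out by the $2\times 2$-minor quadrics, $L\subseteq X$, contradicting Remark~\ref{retta} applied to $A$. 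Thus $|S|=4$ and, by conciseness, $\pi_j(a_1)\neq\pi_j(a_2)$ and $\pi_j(b_1)\neq\pi_j(b_2)$ for every factor $j$.

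The core of the argument is an iterated use of Lemma~\ref{lemmariformulato}. Taking first $\varepsilon=\varepsilon_1+\varepsilon_2$ and $D=\pi_1^{-1}(\pi_1(a_1))\cup\pi_2^{-1}(\pi_2(a_2))$ gives $A\subseteq D$; the residual $S\setminus S\cap D\subseteq\{b_1,b_2\}$ satisfies $h^1(\mathcal{I}_{S\setminus S\cap D}(\hat{\varepsilon}))=0$, since sections of $\mathcal{O}(0,0,1,\dots,1)$ are pulled back from the product of the last $k-2$ factors, and, in the two-point subcase, conciseness of $B$ on the factors $\geq 3$ makes $b_1$ and $b_2$ project to distinct points there. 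The lemma yields $S\subseteq D$, and conciseness of $B$ in factors $1$ and $2$ forbids both $b_i$'s from satisfying the same coordinate condition, so after relabeling $\pi_1(b_1)=\pi_1(a_1)$ and $\pi_2(b_2)=\pi_2(a_2)$. I would then iterate: for each $j\geq 3$, applying Lemma~\ref{lemmariformulato} with $\varepsilon=\varepsilon_1+\varepsilon_j$ and $D_j=\pi_1^{-1}(\pi_1(a_1))\cup\pi_j^{-1}(\pi_j(a_2))$ already places three of the four points of $S$ inside $D_j$, and the at-most-one-point residual satisfies the cohomological hypothesis by global generation of $\mathcal{O}(\hat{\varepsilon})$, forcing $b_2\in D_j$ and hence $\pi_j(b_2)=\pi_j(a_2)$. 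A symmetric application with $\varepsilon=\varepsilon_2+\varepsilon_j$ yields $\pi_j(b_1)=\pi_j(a_1)$ for every $j\geq 3$.

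Thus $b_2$ coincides with $a_2$ on every factor except the first and $b_1$ with $a_1$ on every factor except the second. Writing $a_1=(u_1,v_1,w_3,\dots,w_k)$, $a_2=(u_2,v_2,w'_3,\dots,w'_k)$, $b_1=(u_1,v'_1,w_3,\dots,w_k)$, $b_2=(u'_2,v_2,w'_3,\dots,w'_k)$, the identity $\alpha\nu(a_1)+\beta\nu(a_2)=\alpha'\nu(b_1)+\beta'\nu(b_2)$ rearranges to
\[
u_1\otimes(\alpha v_1-\alpha' v'_1)\otimes(w_3\otimes\cdots\otimes w_k)=(\beta'u'_2-\beta u_2)\otimes v_2\otimes(w'_3\otimes\cdots\otimes w'_k).
\]
Both sides have rank $\leq 1$; conciseness forces $w_j\not\propto w'_j$ for every $j\geq 3$, so their last-factor components are non-proportional and the two sides can only agree if they both vanish. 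The vanishing of the left-hand side then forces $v_1\propto v'_1$, contradicting $v_1\neq v'_1$. The main obstacle is the bookkeeping in the iteration: one must verify the cohomological hypothesis of Lemma~\ref{lemmariformulato} in every subcase (via global generation or via distinctness of projections onto a subproduct of factors) and, simultaneously, extract enough rigidity of $S$ to make the final rank-$1$ identity collapse.
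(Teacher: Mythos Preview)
Your proof is correct and takes a genuinely different route from the paper's. The paper disposes of $k=3$ by citing the literature (Kruskal, and the ``one apparent double point'' classification), then for $k\geq 4$ argues by induction on $k$: if some factor-wise pair $\{a_i,a_i'\}$ differs from $\{b_i,b_i'\}$, projecting away one factor produces two distinct decompositions of a rank-$2$ point in $k-1$ factors, contradicting the inductive hypothesis; once all pairs coincide as sets, a single application of Lemma~\ref{lemmariformulato} with a divisor in $|\mathcal{O}_Y(\varepsilon_k)|$ finishes. Your argument instead works uniformly for all $k\geq 3$ with no appeal to the $k=3$ literature: you trade the induction for repeated applications of Lemma~\ref{lemmariformulato} with degree-$2$ divisors that pin $B$ down against $A$ factor by factor, and then close with a direct rank-$1$ tensor identity rather than another divisor step. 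The gain is self-containment and a uniform treatment of $k=3$; the cost is the bookkeeping you flag at the end. One point you should make explicit: in the iteration step, having $b_2\in D_j$ only gives $\pi_1(b_2)=\pi_1(a_1)$ \emph{or} $\pi_j(b_2)=\pi_j(a_2)$, and you need the already-established $\pi_1(b_1)=\pi_1(a_1)$ together with $\pi_1(b_1)\neq\pi_1(b_2)$ (conciseness of $B$) to kill the first alternative. Likewise, in the final step you are implicitly using $\alpha\neq 0$ (since $A$ is irredundant) to turn $\alpha v_1=\alpha' v_1'$ into the desired contradiction with $v_1\not\propto v_1'$.
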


\begin{proof}
We only need to check the case of $k \geq 4$ since $k=2,3$ are classically known. The case of matrix is obviously not-identifiable (cf. Remark \ref{2fattorisigma2}), while the identifiabily in the case $k=3$ is classically attributed to Segre and it is also among the so called Kruskal range (cf. \cite{Kruskal}, \cite[Thm. 4.6]{cov}, \cite[Thm. 1.2]{C.O.}), see also \cite[line $7$ of page $ 484$]{CMR}. We assume therefore that $k\geq 4 $.

Since $X$ is cut out by quadrics, then if a line $L \subset \mathbb{P}^N$ is such that $\deg (L \cap X)>2 $ then $L \subset X $ and the points of $L $ have $X$-rank $1$. 
Let $A,B \in\mathcal{S}(Y,q)$, either $\langle A \rangle =\langle B \rangle$ or $\langle A \rangle \cap\langle B \rangle=\{q\}$. In fact, in the first  case $A=B$ since $r_X(q)=2$ and therefore $\langle A\rangle$ is not contained in $X$, moreover $X$ is cut out by quadrics. In the second case $A\neq B$. %Also remark that given two distinct lines, they intersect in at most \begin{pp}one point\end{pp} so that if $A,B \in \mathcal{S}(Y,p) $ then either $A=B $ or $A \cap B= \emptyset $ \begin{pp} infatti
%siccome (nella segre) l'intersezione delle rette generate da $A $ e $B$ è al più $q$, allora i corrispettivi insiemi o sono disgiunti (nel caso $q$ è il punto di intersezione delle $2$ rette) o devono per forza coincidere se si suppone che l'intersezione tra $A$ e $B$ è non vuota (perchè in tal caso avremmo che le rette generate dai due insiemi sarebbero la stessa). Puo aver senso?
%\end{pp}; and that the proposition is true for a small number of factors. 
We can therefore assume that
$A,B \in \mathcal{S}(Y,q) $ are two disjoint sets:
%such that $A \neq B $; in particular $A \cap B = \emptyset $,
$A=\{a, a' \} $, $B=\{ b, b' \} $, where $a=(a_1, \dots, a_k) $, $a'=(a_1',\dots, a_k') $ and $b=(b_1,\dots,b_k) $, $b'=(b_1',\dots,b_k') $. Since $a \neq a' $, we may assume that at least one of their coordinates is different. 
%:  there exists an $i\in \{ 1,\dots ,k\} $ such that $a_i \neq a_i' $. 
Actually we can  assume that all the $a_i\neq a_i'$, otherwise,  by the concision property, one could consider one factor less. The same considerations hold for $B $. %, thus $\forall i$ $a_i \neq a_i'$ and  $b_i\neq b_i'$. 

Now suppose %by contradiction
that there exists an index $i\in \{1 ,\dots,k \} $ such that $\{a_i,a_i' \}  \neq  \{ b_i, b_i'\} $ and  let such an index be $i=1 $: % without any loss of generality so that 
$\{a_1,a_1' \} \neq \{ b_1, b_1'\} $. %\begin{pp}
%(scusi era diverso non uguale!)
%%\end{pp}
\\
Now we proceed by induction on $k$.
Let $\eta_k$, $\nu_k $, and $X_k $ be as in Notation \ref{pi}.
Let $\tilde{q}=(q_1,\dots,q_{k-1}) $ be the projection $ \eta_k(q)
$, then $\eta_k(A)\neq \eta_k(B) $ and $ \emptyset \neq \langle \nu_k(\eta_k(A)) \rangle  \cap \langle \nu_k(\eta_k(B)) \rangle \supset \{ \tilde{q} \} $ because $\{ q\}\subset \langle \nu(A) \rangle \cap \langle \nu(B) \rangle $. So $r_{X_k}(\tilde{q})= 2$ and $| \mathcal{S}(Y_k,\tilde{q})|\geq 2 $, which is a contradiction because $X_k $ is a concise Segre of $k-1 $ factor (where $ k>3$) and a point of it cannot have more than a decomposition. Thus for all $i=, 1\ldots , k $ we have that $\{ a_i,a'_i\}=\{b_i,b'_i \} $.

Without loss of generality assume that $a_1=b_1 $ and $a'_1=b'_1 $, moreover up to permutation there exists an index $e \in \{1, \dots , k-1 \} $ such that $b_i=a_i $ and consequently $b'_i=a'_i $ for $1 \leq i \leq  e $ and $b_i=a'_i $ and $b'_i=a_i $ for $e+1 \leq i \leq k $. 
%\begin{AB}
%(Ma non davevamo detto sopra che sono tutti uguali? perch\'e qui si dice solo da un certo indice in poi?)
%\end{AB}
%\begin{pp} questa roba qua sopra è solo per dire che li riordiniamo gli elementi dei vari insiemi in modo tale che fino a un certo punto abbiamo  che $a_i=b_i $ (e quindi necessariamente $ a'_i=b'_i$) e poi da li in poi abbiamo $a_i=b'_i $ (e quindi $a'_i=b_i $) \end{pp}
Eventually by exchanging the role of the first $e $ elements with the others, we have that $k-e \geq 2 $ because by assumption $k \geq 4 $. Let $H \in |\mathcal{O}_Y(0,\dots,0,1) | $ be the only element containing $ a'$, $H= \mathbb{P}^1 \times \cdots \times \mathbb{P}^1 \times \{ a'_k\} \cong (\mathbb{P}^1)^{\times k-1}$; then $\operatorname{Res}_H(A\cup B)=\{ a',b'\} $ and since $k-e\geq 2 $ we have that $\eta_k(a')\neq \eta_k(b') $, i.e. $h^1(\mathcal{I}_{\operatorname{Res}_H(A \cup B)}(1,\dots,1,0))=0 $. By  Lemma \ref{lemmariformulato},  %\cite[Lemma 5.2]{BalBer2} or \cite[Lemmas 2.4  and 2.5]{BalBerChristGes},  
we get $a'=b' $ which contradicts the fact that $A \cap B=\emptyset $.
\end{proof}

\begin{corollary}\label{spaziosolS2} Let $q$ be any rank-2 tensor.
 %and denote by $ \mathcal{S}_q$ the set of solutions that evince the rank of $ q$, i.e. $\mathcal{S}_q:=\{ A \subset X \vert \;  \sharp(A)= r_X(q) \textit{ and }q\in \langle A\rangle \} $. 
If $q $ is not-identifiable, then there is a bijection between $ \mathcal{S}(Y,q)$ and $\mathbb{P}^2 \setminus L $, where $L\subset \mathbb{P}^2 $ is a projective line, $q \in \tau (X) $ and $ L$ parametrizes the set of all degree $2 $ connected subschemes $V $ of $Y$ such that $q \in \langle \nu (V) \rangle  $. 
\end{corollary}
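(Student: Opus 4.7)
The plan is to invoke Proposition \ref{sigma20}, which characterises the non-identifiable rank-$2$ tensors as those whose concise Segre is $X_q=\nu(\mathbb{P}^1\times\mathbb{P}^1)\subset\mathbb{P}^3$. By the Concision/Autarky Lemma \ref{concision} I may thus reduce to the case $Y=\mathbb{P}^1\times\mathbb{P}^1$ with $X=\nu(Y)\subset\mathbb{P}^3$ the smooth Segre quadric, and $q\in\mathbb{P}^3\setminus X$.

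First I would check that $q\in\tau(X)$. The polar plane $H$ of $q$ with respect to the quadratic form defining $X$ meets $X$ in a smooth conic $C$, and for every $p\in C$ the tangent plane $T_pX$ contains $q$, so the line $\overline{pq}$ is tangent to $X$ at $p$; hence $q\in\tau(X)$.

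The core of the proof is to identify the parameter space $\mathcal{V}_q:=\{V\subset Y:\deg V=2,\ q\in\langle\nu(V)\rangle\}$ with a $\mathbb{P}^2$. Because $X$ is cut out by a single quadric and $q\notin X$, every line $\ell\subset\mathbb{P}^3$ through $q$ meets $X$ in a subscheme of length exactly $2$ (a length $>2$ intersection would force $\ell\subset X$, impossible as $q\notin X$). The assignments $\ell\mapsto\nu^{-1}(\ell\cap X)$ and $V\mapsto\langle\nu(V)\rangle$ are then mutually inverse bijections between the $\mathbb{P}^2$ of lines through $q$ in $\mathbb{P}^3$ and $\mathcal{V}_q$. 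Under this identification, the connected (non-reduced) $V$ correspond exactly to the tangent lines from $q$ to $X$, i.e.\ to the lines $\overline{pq}$ with $p\in C$, which form a rational curve $L\cong C\cong\mathbb{P}^1$; this is the claimed projective line. The complement $\mathbb{P}^2\setminus L$ then parametrizes the proper secant lines of $X$ through $q$, each of which yields a unique unordered pair of distinct rank-$1$ tensors summing to $q$, giving the asserted bijection with $\mathcal{S}(Y,q)$.

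The main conceptual step, more than a computation, is the identification $\mathcal{V}_q\cong\mathbb{P}^2$: this is where the quadric nature of the Segre variety is essential, so that the ``obvious'' $\mathbb{P}^2$ of lines through $q$ is automatically in bijection with the length-$2$ subschemes whose span contains $q$. Once this identification is in place, the splitting into the reduced part $\mathcal{S}(Y,q)$ and the connected part $L$ is immediate from whether $\ell\cap X$ is two distinct points or a single double point.
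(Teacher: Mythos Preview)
Your approach is essentially the same as the paper's: reduce via Proposition~\ref{sigma20} and Autarky to the smooth quadric $X\subset\mathbb{P}^3$, use the polar plane of $q$ to locate the tangent points, and parametrize the length-$2$ subschemes through $q$ by the $\mathbb{P}^2$ of lines in $\mathbb{P}^3$ passing through $q$. The paper does exactly this, via the isomorphism $\varphi:H_q\to\Pi_q$, $p\mapsto\langle p,q\rangle$, from the polar plane to that $\mathbb{P}^2$.

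One genuine issue, however, which is present in the paper's own proof as well: the tangent lines from an external point $q$ to a smooth quadric surface in $\mathbb{P}^3$ are parametrized by the smooth conic $X\cap H_q$, and under the identification with the $\mathbb{P}^2$ of lines through $q$ they form a smooth \emph{conic}, not a line. Your sentence ``which form a rational curve $L\cong C\cong\mathbb{P}^1$; this is the claimed projective line'' conflates being abstractly isomorphic to $\mathbb{P}^1$ with being a degree-$1$ linear subspace of $\mathbb{P}^2$. What you (and the paper) actually establish is a bijection $\mathcal{S}(Y,q)\cong\mathbb{P}^2\setminus C$ with $C$ a smooth conic parametrizing the connected degree-$2$ subschemes; the statement's ``projective line $L$'' should accordingly be read as ``smooth conic''.
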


\begin{proof}
It suffices to work with a Segre variety of 2 factors only because by Proposition \ref{sigma20} it is the unique not-identifiable case in rank-2. Thus $X\subset \mathbb{P}^3 $ is a quadric surface. Denote by $ H_q \subset \mathbb{P}^3$ the polar plane of $X $ with respect to $q $. Since $q \notin X $ we have that $q \notin H_q $ and the intersection $X \cap H_q =\{ p \in X \; |\; T_pX \ni q \}$ is a smooth conic. 
Remark also that by definition a point $o \in X $ is such that $q \in T_oX $ if and only if $o \in X \cap H_q \subset  \tau(X) $. \\
Fix $o \in H_q $, then
\begin{itemize}
\item if $o \notin X $, the line given by $ \langle o,q \rangle $ is not tangent to $X $ and when considering the intersection $\langle o,q\rangle \cap X $, it is given by two points $p_1, p_2 \notin \{o,q\} $ such that $\{ p_1, p_2\} \in \mathcal{S}(Y,q) $; 
\item if $o \in X $, i.e. $o \in X \cap H_q $, then the line $ \langle o,q \rangle $ is tangent to $X $.
\end{itemize}
Consider $\Pi_q= \{ \textnormal{lines } L \subset \mathbb{P}^3 \textnormal{ passing through } q\} \cong \mathbb{P}^2 $ and consider the following isomorphism $\varphi: H_q \longrightarrow \Pi_q $ defined by $p \mapsto \langle p,q \rangle $. %$\varphi $ is an isomorphism, so 
Clearly $\varphi (X \cap H_q) $ is  a smooth conic $ \mathcal{C}$ of $\Pi_q $. Moreover one can notice that $\Pi_q \setminus \varphi (X \cap H_q) \cong \mathbb{P}^2 \setminus \mathcal{C} $ are just the points of the first case.
\end{proof}

%%%%%%%%%%%%%%%%%%%%%%%%%%%%%%%%%%%%%%%%%%%%%%
\section{Examples of not-identifiable rank-3 tensors}\label{sec.ex}
%%%%%%%%%%%%%%%%%%%%%%%%%%%%%%%%%%%%%%%%%%%%%%%
The purpose of this section is to explain in details the phenomena behind the not-identifiable rank-3 tensors. In the main Theorem  \ref{main_theorem} they will turn out to be the unique cases of not-identifiability for a rank-3 tensor.

From now on we always consider $q \in \mathbb{P}^N $ such that $r_X(q)=3 $, 
therefore, by Remark \ref{remarkconcision}, 
%given $q \in \mathbb{P}^N $ we reduce to study 
we may assume that $q$ is an order-$k$ tensor with at most 3 entries in each mode, i.e. the concise Segre of $q$ is $X_q= \nu(\mathbb{P}^{n_1} \times \cdots \times  \mathbb{P}^{n_k} )$, with $n_1, \dots ,n_k \in \{1,2 \}$. 

First of all let us remark that the matrix case is highly not-identifiable even for the rank-3 case.

\begin{remark}\label{caso1} 
In the case of two factors (i.e. $k=2$), a rank-3 tensor $q$ is a $3\times 3 $ matrix  of full rank.
%Let $k=2 $, consider a rank $3 $ point $q$; its concise Segre is $X_q=\nu(Y') $, where $Y'=\mathbb{P}^2 \times \mathbb{P}^2$. Thus $ q$ can be seen as a $3\times 3 $ matrix $M $ of rank $ 3$.
%Since $\operatorname{GL}_3(\mathbb{C})\times \operatorname{GL}_3(\mathbb{C}) $ acts invariantly on $\nu(Y') $ as two linear invertible $3\times 3 $ matrices one acting on the left and the other on the right by usual matrix multiplication, we reduce to consider a unique rank $3 $ matrix $ M \sim I_3$. 
%In order to compute the dimension of $\mathcal{S}(Y,q)$, we can use K\"unneth formula. 
The dimension of the concise Segre $X$ of $ 3\times 3$ matrices is $4 $ and % by direct computation using the Kunnet formula one has
%$h^0(O_{Y}(1,1))=h^0(O_{\mathbb{P}^2}(1))\cdot h^0(O_{\mathbb{P}^2}(1)) =3\cdot 3=9 $ and 
$\dim(\sigma_3(X))=\min \{\dim(\mathbb{P}^8), 3\dim(X)+2 \}=\min\{8,14\}=8$. Thus $\dim \mathcal{S}(Y,q)= 14-8=6 $ for all $ q \in \mathbb{P}^8$ of rank $ 3$.
\end{remark}

Consider now the third secant variety of the Segre embedding of $Y= \mathbb{P}^{n_1} \times \cdots \times \mathbb{P}^{n_k} $, where %$n_1 \geq \dots \geq n_k $ and 
$n_i\in \{1,2 \}$, the following Examples 
 \ref{caso3} and \ref{caso4} and
 %\ref{caso5} 
 Proposition \ref{x1.1} provide instances of not-identifiability that we will show to be essentially the only classes of not-identifiable rank-3  tensors in $\mathbb{C}^{n_1+1}\otimes \cdots \otimes \mathbb{C}^{n_k+1}$ (cases \eqref{3.}, \eqref{4.} and \eqref{5.} respectively of our main Theorem \ref{main_theorem}) more than the well known ones (matrix case, points on tangential variety of $\nu((\mathbb{P}^1)^{\times 3})$, and elements of the defective $\sigma_3(\nu((\mathbb{P}^1)^{\times 4}))$ -- items \eqref{1.}, \eqref{2.} and \eqref{6.} respectively of Theorem \ref{main_theorem}). 

In the following remark we explain the behaviour on $\sigma_3((\mathbb{P}^{1})^{\times 4})$.

\begin{remark}\label{allP14}
It has been shown in \cite{AOP} (cf. also \cite{CGGrank, CGG}) that  the third secant variety of a Segre variety $X$ is never defective unless
either $X=\nu(\mathbb{P}^1 \times \mathbb{P}^1 \times \mathbb{P}^1 \times \mathbb{P}^1) $ or $X=\nu(\mathbb{P}^1 \times \mathbb{P}^1 \times \mathbb{P}^a) $, with $a\geq 3 $.

%[, thus these two cases are automatically excluded when studying identifiability \begin{AB}
%Perch\'e? Questo ti dimostra che l'elemento generico di quel rango ha infitie decomposizioni, ma non prova nulla per l'elemento particolare. Per non considerarlo nella dimostrazione devi dimostrare che \`e non indentificabile per ogni elemento della variet\`a, non basta per il generico. Taglia quello che ho messo tra parentesi[]. Poi lo dimostriamo come segue.] 

The case in which $q$ is a rank-3 tensor in $\langle \nu(\mathbb{P}^1 \times \mathbb{P}^1 \times \mathbb{P}^a) \rangle$ with $a\geq 3$ corresponds to a not-concise tensor (cf. Remark \ref{remarkconcision}) therefore it won't  play a role in our further discussion.

The case in which $X=\nu(\mathbb{P}^1 \times \mathbb{P}^1 \times \mathbb{P}^1 \times \mathbb{P}^1)$ and  $q\in \langle X \rangle$ can also be easily handled. The fact that  $\dim(\sigma_3(X))$ is strictly smaller than the expected dimension proves that the generic element of $\sigma_3(X)$ has an infinite number of rank-3 decompositions. By definition of dimension there is no element of $\sigma_3(X)$  s.t. its tangent space has dimension equal to the expected one: $\dim(T_q(\sigma_3(X)))\leq \dim \sigma_3(X)$ for all $q\in \sigma_3(X)$. This does not exclude the existence of certain special rank-3 tensors $q$ such that $\dim(T_q(\sigma_3(X)))=\dim(T_{q'}(AbSec_3(X)))< 14$ where $AbSec_3(X):=\{(p; p_1, p_2,p_3)\in \mathbb{P}^{15} \times X^{\times 3} \, | \, p \in \langle p_1, p_2, p_3 \rangle\}$ is the 3-th abstract secant of $X$ and $q'$ is the preimage of $q$ via the projection on the first factor. The impossibility of the existence of such a point is guaranteed by \cite[Cap II, Ex 3.22, part (b)]{Hart}. This proves that all the tensors of $\sigma_3^0(X)$ have an infinite number of rank-3 decompositions.
\end{remark}

Before explaining the other not-identifiable examples we need some preliminary results.

\begin{remark}\label{sulP2aebindip}
Let $Y $ be a multiprojective space with at least two factors where at least one of them is of projective dimension $2 $. By relabeling, if necessary, we can assume that the first factor is a $\mathbb{P}^2 $. Let $q\in \sigma_3^0(\nu(Y))$, with $\nu(Y)$ the concise Segre of $q$ 
and let $A,B \in \mathcal{S}(Y,q)$ be two disjoint subsets evincing the rank of $q $. By Autarky $\langle \pi_1(A)\rangle = \langle \pi_1(B)\rangle = \mathbb{P}^2 $;
  moreover when considering the restrictions of the projections $\pi_{1\vert A} $ and $\pi_{1\vert B} $ to the subsets $A $ and $B $ respectively, they are both injective and both $\pi_1(A) $ and $\pi_1(B) $ contain linearly independent points.
\end{remark}

%We will also show that there are no other cases of non-identifiability for the $3 $-secant variety of a minimal multiprojective space. 
\begin{remark}
\label{dimensione caso2} 
Consider  $Y= \mathbb{P}^2 \times \mathbb{P}^1 \times \mathbb{P}^1 $  and an irreducible divisor $G \in |\mathcal{O}_Y(0,1,1) | $. Then $\sigma_2(\nu(G))\subsetneq \sigma_3(\nu (G)) =\langle \nu (G) \rangle = \mathbb{P}^8$.
%$\dim \langle \nu(G) \rangle=8 $, $\sigma_3(\nu(G))= \langle \nu (G) \rangle $ and $\sigma_2(\nu(G))\subsetneq \sigma_3(\nu (G)) $. 
Indeed $G$ is nothing else than the Segre-Veronese variety (\cite{b}) of $\mathbb{P}^2\times \mathbb{P}^1$ embedded in bi--degree (1,2), i.e. $G\cong \mathbb{P}^2 \times \mathbb{P}^1 $, $ \mathcal{O}_Y(1,1,1)_{\rvert _{G}}\cong \mathcal{O}_{\mathbb{P}^2 \times \mathbb{P}^1}(1,2)$ and $\mathcal{O}_Y(1,0,0)\cong \mathcal{O}_Y(1,1,1)(-G)$. The classification of the dimensions of secant varieties of such a Segre-Veronese can be found in 
%The case for $\mathbb{P}^ 1 \times \mathbb{P}^ 2$ embedded in any bi-degree $(d_{1}, d_{2})$ is done by K. Baur and J. Draisma in 
\cite{BD,ChCi,BCC, BBC}.
%. L. Chiantini and C. Ciliberto in 
%\cite{ChCi}, 
%handle the case $\mathbb{P}^ 1
%\times \mathbb{P}^ n$ embedded in bi-degree $ (d,1)$.
%In the paper \cite{CGG4}  one can find  the cases $\mathbb{P}^ m \times \mathbb{P}^ n$
%with bi-degree $ (n+1,1)$, 
%$\mathbb{P}^ 1 \times \mathbb{P}^ 1$ with bi-degree $
%(d_{1}, d_{2})$ and $\mathbb{P}^ 2 \times \mathbb{P}^ 2 $ with bi-degree %$(2,2)$.
%In \cite{Abrescia} S. Abrescia studies  the cases $\mathbb{P}^ n \times \mathbb{P}^ 1 $ in bi-degree
%$(2,2d+1)$, 
%$\mathbb{P}^ n \times \mathbb{P}^ 1$ in bi-degree $ (2,2d)$, and 
%$\mathbb{P}^n \times \mathbb{P}^ 1 $ in bi-degree $ (3,d)$. 
%In 
%\cite{BCC} 
%the authors study the cases of $\mathbb{P}^n \times \mathbb{P}^m$ embedded in bi-degree $(1,d)$.
%and \cite{BBC}.
\end{remark}

\begin{proposition}\label{dimensione caso3}
For the Segre embedding of $Y=\mathbb{P}^2 \times \mathbb{P}^1 \times \mathbb{P}^1 $ fix $G_1 \in |\mathcal{O}_Y(0,1,0) | $ and $G_2 \in |\mathcal{O}_Y(0,0,1) | $ and define $G:= G_1 \cup G_2$ to be their union. 
%is a reducible element of $|\mathcal{O}_Y(0,1,1) | $. 
We have that for $\{i,j\}=\{1,2\}$, $ \dim\langle \nu(G_i)\rangle=5 $, $\dim\langle \nu(G)\rangle=8$, $\sigma_2(\nu(G_i))=\langle \nu (G_i) \rangle $ and $ \langle \nu (G) \rangle$ is the join of $\sigma_2(\nu(G_i)) $ and $\nu (G_j) $.
\end{proposition}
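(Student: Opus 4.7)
The plan is to reduce everything to standard facts about the Segre embedding of $\mathbb{P}^2\times\mathbb{P}^1$, a short residual exact sequence, and the fact that joins with a linear space are just linear spans.

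First I would fix coordinates so that $G_1=\mathbb{P}^2\times\{p_2\}\times\mathbb{P}^1$ and $G_2=\mathbb{P}^2\times\mathbb{P}^1\times\{p_3\}$, so each $G_i\cong\mathbb{P}^2\times\mathbb{P}^1$ and the restriction of $\mathcal{O}_Y(1,1,1)$ to $G_i$ is $\mathcal{O}_{\mathbb{P}^2\times\mathbb{P}^1}(1,1)$. This identifies $\nu(G_i)$ with the classical Segre embedding of $\mathbb{P}^2\times\mathbb{P}^1$, whose span is a $\mathbb{P}^5$ parameterising $3\times 2$ matrices; hence $\dim\langle\nu(G_i)\rangle=5$. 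Since $\nu(G_i)$ parameterises the rank-$1$ matrices and every $3\times 2$ matrix has rank at most $2$, we obtain $\sigma_2(\nu(G_i))=\langle\nu(G_i)\rangle$.

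Next I would compute $\dim\langle\nu(G)\rangle$ by observing that $G=G_1+G_2$ is an effective Cartier divisor in $|\mathcal{O}_Y(0,1,1)|$. The residual/restriction exact sequence reads
$$0\to\mathcal{O}_Y(1,0,0)\to\mathcal{O}_Y(1,1,1)\to\mathcal{O}_G(1,1,1)\to 0,$$
and is exact on global sections because Künneth gives $h^1(\mathcal{O}_Y(1,0,0))=0$. From $h^0(\mathcal{O}_Y(1,1,1))=12$ and $h^0(\mathcal{O}_Y(1,0,0))=3$ I read off $h^0(G,\mathcal{O}_G(1,1,1))=9$, so $\dim\langle\nu(G)\rangle=8$.

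Finally, because $\sigma_2(\nu(G_i))=\langle\nu(G_i)\rangle$ is a linear subspace, the join of any variety with a linear subspace is simply the linear span of their union. Therefore
$$J\bigl(\sigma_2(\nu(G_i)),\nu(G_j)\bigr)=\langle\nu(G_i)\cup\nu(G_j)\rangle=\langle\nu(G)\rangle,$$
which closes the proposition. The only mildly delicate point I foresee is the cohomological step yielding $h^0(G,\mathcal{O}_G(1,1,1))=9$; the rest is essentially bookkeeping with the isomorphism $G_i\cong\mathbb{P}^2\times\mathbb{P}^1$, the rank bound for $3\times 2$ matrices, and the trivial behaviour of the join with a linear space.
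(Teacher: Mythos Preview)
Your first two steps are fine and essentially match the paper: the identification $G_i\cong\mathbb{P}^2\times\mathbb{P}^1$ under $\mathcal{O}(1,1)$ gives $\dim\langle\nu(G_i)\rangle=5$ and $\sigma_2(\nu(G_i))=\langle\nu(G_i)\rangle$, and the residual sequence with $h^1(\mathcal{O}_Y(1,0,0))=0$ yields $\dim\langle\nu(G)\rangle=8$.

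The gap is in the last step. The assertion ``the join of any variety with a linear subspace is simply the linear span of their union'' is false. For instance, if $\Lambda=\{p\}$ is a point and $V\subset\mathbb{P}^3$ is a twisted cubic, then $J(\Lambda,V)$ is the cone over $V$ with vertex $p$, a surface, while $\langle\Lambda\cup V\rangle=\mathbb{P}^3$. In general, for a linear space $\Lambda$ the join $J(\Lambda,V)$ is the cone with vertex $\Lambda$ over $\overline{\pi_\Lambda(V)}$, which equals $\langle\Lambda\cup V\rangle$ only when that projection is surjective onto the complementary linear space. So from $\sigma_2(\nu(G_i))=\langle\nu(G_i)\rangle$ alone you cannot conclude $J(\sigma_2(\nu(G_i)),\nu(G_j))=\langle\nu(G)\rangle$; you still owe a computation.

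Here the missing computation is short: projecting from $\Lambda=\langle\nu(G_1)\rangle=\mathbb{P}(V_1\otimes p_2\otimes V_3)$ inside $\langle\nu(G)\rangle\cong\mathbb{P}^8$ lands in $\mathbb{P}^2\cong\mathbb{P}(V_1)$, and a rank-one tensor $[u\otimes v\otimes p_3]\in\nu(G_2)$ with $v\notin\langle p_2\rangle$ is sent to $[u]$; as $u$ ranges over $\mathbb{P}(V_1)$ this projection is onto, so the cone is all of $\mathbb{P}^8$. The paper reaches the same conclusion by instead computing the dimension of $\sigma_2(\nu(G_i))\cap\nu(G_j)$ and doing a dimension count for the join. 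Either route closes the argument, but one of them must actually be carried out.
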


\begin{proof} 
 First of all remark that, for $i=1,2$, $G_i\cong \mathbb{P}^2 \times \mathbb{P}^1 $,  $\mathcal{O}_Y(1,1,1)_{\rvert _{G_i}} \cong \mathcal{O}_{\mathbb{P}^2\times \mathbb{P}^1}(1,1) $ and $G$ is a reducible element of $|\mathcal{O}_Y(0,1,1) | $. 
  With an analogous computation of the one in Remark \ref{dimensione caso2} one sees that $\dim \langle \nu(G) \rangle=8 $ and %with similar computations one also gets the proof of the dimensions of the linear spaces and the equality
  $\sigma_2(\nu(G_i))=\langle \nu (G_i) \rangle $. 
It remains to show that $ \langle \nu (G) \rangle= \mathcal{J}$, where $\mathcal{J}$ denotes the join of $\sigma_2(\nu(G_i)) $ and $\nu (G_j) $ with $\{i,j\}=\{1,2\}$.
We remark that since $\sigma_2 (\nu(G))= \mathbb{P}^5$, then $\mathcal{J}=\mathrm{Join}(\mathbb{P}^3,\nu(G_1),\nu(G_2))  $. In order to show that $\mathcal{J}=\mathbb{P}^8$ it is sufficient to see that $\dim(\sigma_2(\nu(G_i)\cap \nu(G_j)))=1$ and this is a straightforward computation since the elements of $\nu(G_1)$ are tensors with a second factor fixed, while the elements of $\nu(G_2)$ have the third factor fixed, and in order to have the equality between an element of $\sigma_2(\nu(G_1))$ and an element of $\nu(G_2)$ it is sufficient to impose two linear independent conditions and therefore since $\dim(\nu(G_2))=3$ we have that the intersection has dimension 1.
\end{proof}

\begin{example}\label{caso3}
Take $Y=\mathbb{P}^2 \times \mathbb{P}^1 \times \mathbb{P}^1 $, consider the Segre embedding on the last two factors and take a hyperplane section which intersects $\nu(\mathbb{P}^1 \times \mathbb{P}^1) $ in a conic
$\mathcal{C }$, then take a point $q \in \langle \nu (\mathbb{P}^2 \times \mathcal{C} )\rangle $. Such a construction is equivalent to consider an irreducible divisor $G \in |\mathcal{O}_Y(0,1,1) | $, so $ G\cong \mathbb{P}^2 \times \mathbb{P}^1$ embedded via $\mathcal{O}(1,2)$, then $\dim \sigma_2(\nu(G))=7 $, thus $\sigma_2(\nu(G)) \subsetneq \langle \nu(G) \rangle \simeq \mathbb{P}^8 $.  
As a direct consequence we get that a general point $q \in \langle \nu(G) \rangle $ has $\nu(G) $-rank $3 $ and it is not-identifiable because of the not-identifiability of the points on $\langle \mathcal{C} \rangle $ and by \cite[Cap II, Ex 3.22, part (b)]{Hart}.  
Thus $ \dim(\mathcal{S}(G,q))= 3 $. 
\end{example}

 The following example is in the same setting of the previous one, but in this case we deal with a reducible conic and in such a case we get a $4 $-dimensional family of solutions.

\begin{example}\label{caso4} 
Fix $Y=\mathbb{P}^2 \times \mathbb{P}^1 \times \mathbb{P}^1$. Consider $ G_1 \in |\mathcal{O}_Y(0,0,1)|$, $ G_2 \in |\mathcal{O}_Y(0,1,0)|$ and call $G=G_1 \cup G_2$ which is a reducible element of $ |\mathcal{O}_Y(0,1,1)|$. By Proposition \ref{dimensione caso3},  $\dim \langle \nu(G) \rangle=8 $, moreover by a dimension count we have $\langle \nu(G_i)\rangle= \sigma_2(G_i) $, for $i=1,2 $, both having dimension $5 $. 
By Proposition \ref{dimensione caso3} we also have that $\langle \nu(G) \rangle=\mathcal{J}_1=\mathcal{J}_2$, where $\mathcal{J}_1 =\textnormal{Join}( \sigma_2(\nu(G_1)), \nu(G_2))$ and $\mathcal{J}_2= \textnormal{Join}( \sigma_2(\nu(G_2)) , \nu(G_1))$. 
A general $q \in \langle \nu(G)\rangle $ has rank $3$ and for the subsets evincing its rank we have a $4 $-dimensional family of sets $A $ such that $\sharp(A)=3$, $\sharp(A\cap G_1)=2$, $\sharp (A \cap G_2)=1 $, $A\cap G_1 \cap G_2=\emptyset $ and $q \in \langle \nu(A)\rangle $. Such a family has dimension 4 since $G_1$ is a non defective threefold in $\mathbb{P}^5$, therefore there exists a $2$-dimensional family of sets of cardinality $2$ in $G_1$ spanning a general point of $\mathbb{P}^5$; moreover $q$ sits in a $2$-dimensional family of lines joining points of $G_1$ and $G_2$. 
Analogously, by looking at $q $ as an element of $\mathcal{J}_2 $, we get the existence of a $4 $-dimensional family of sets $B $ such that $\sharp(B)=3$, $\sharp(B\cap G_2)=2$, $\sharp (B \cap G_1)=1,$ $ A\cap G_1 \cap G_2=\emptyset $ and $q \in \langle \nu(B) \rangle $. So we proved that $\mathcal{S}(G,q)$ contains at least two dimensional families of solution. Thus $\dim\mathcal{S}(G,q)\geq 4 $. 
\end{example}

\begin{proposition}\label{nonidesempi} 
Let $q\in \sigma_3^0(\nu(\mathbb{P}^{2}\times \mathbb{P}^{1}\times \mathbb{P}^{1}))$ and suppose that there exist $A,B\in \mathcal{S}(Y,q)$ s.t. $\sharp (A\cup B)=6$. Then there exist a unique $G\in |\mathcal{O}_Y(0,1,1)|$ containing $S=A\cup B$. For such a $G$ we have that $\mathcal{S}(Y,q)=\mathcal{S}(G,q)$.
\end{proposition}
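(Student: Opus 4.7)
My plan is to exploit the first flattening of $q$. Write $V_1,V_2,V_3$ for the underlying vector spaces (of dimensions $3,2,2$) and view $q$ as the contraction map $T_q\colon V_1^{*}\to V_2\otimes V_3$. Because the concise Segre of $q$ is all of $\mathbb{P}^2\times\mathbb{P}^1\times\mathbb{P}^1$, Remark \ref{remarkconcision} forces $T_q$ to be injective: its image $W:=\mathrm{Im}(T_q)$ is a $3$-dimensional subspace of the $4$-dimensional $V_2\otimes V_3$, hence a hyperplane $H:=\mathbb{P}(W)\subset\mathbb{P}(V_2\otimes V_3)=\mathbb{P}^3$. I then set $C_q:=H\cap\nu_1(\mathbb{P}^1\times\mathbb{P}^1)$, a hyperplane section of the Segre quadric, i.e.\ a divisor of class $(1,1)$ on $\mathbb{P}^1\times\mathbb{P}^1$; my candidate divisor is $G:=\eta_1^{-1}(C_q)\in|\mathcal{O}_Y(0,1,1)|$.

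The core linear-algebra observation is that for every decomposition $C=\{(p_l,r_l,s_l)\}_{l=1}^{3}\in\mathcal{S}(Y,q)$ the vectors $\{r_l\otimes s_l\}_{l=1}^{3}$ span $W$. Indeed, by Remark \ref{sulP2aebindip} (Autarky applied to the first factor) the set $\pi_1(C)=\{[p_l]\}$ is a triangle in $\mathbb{P}^2$, so the $p_l$ form a basis of $V_1$; then $T_q(\psi)=\sum_l\psi(p_l)\,r_l\otimes s_l$ shows $W=\mathrm{span}\{r_l\otimes s_l\}$. In particular $[r_l\otimes s_l]\in H\cap\nu_1(\mathbb{P}^1\times\mathbb{P}^1)=C_q$, so $\eta_1(C)\subset C_q$ and therefore $C\subset G$. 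Specializing to $C=A$ and $C=B$ gives $S=A\cup B\subset G$ (existence), and applying it to an arbitrary $C\in\mathcal{S}(Y,q)$ gives the inclusion $\mathcal{S}(Y,q)\subset\mathcal{S}(G,q)$.

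For uniqueness of $G$, any $G'\in|\mathcal{O}_Y(0,1,1)|$ containing $S$ is of the form $\eta_1^{-1}(C')$ with $C'=H'\cap\nu_1(\mathbb{P}^1\times\mathbb{P}^1)$ for some hyperplane $H'\subset\mathbb{P}^3$; but $H'\supset\eta_1(A)=\{[r_l\otimes s_l]\}$, and these span $W$, so $H'=H$ and $G'=G$. The reverse inclusion $\mathcal{S}(G,q)\subset\mathcal{S}(Y,q)$ will follow once we know $r_{\nu(G)}(q)=3$: the bound $\geq 3$ is immediate from $\nu(G)\subset X$, and the bound $\leq 3$ comes from the explicit decomposition $A\subset G$. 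The only delicate point I foresee is justifying that Autarky really pins down $p_1,p_2,p_3$ as a basis of $V_1$ for every rank decomposition (not only for $A$ and $B$); this is precisely what Remark \ref{sulP2aebindip} supplies, after which everything reduces to linear algebra of the first flattening.
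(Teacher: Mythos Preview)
Your argument is correct and takes a genuinely different route from the paper.

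The paper works cohomologically: it chooses $G\in|\mathcal{I}_B(0,1,1)|$ (which exists because $h^0(\mathcal{O}_Y(0,1,1))=4>3$), observes that $S\setminus(S\cap G)\subseteq A$ and $h^1(\mathcal{I}_A(1,0,0))=0$ by Remark~\ref{sulP2aebindip}, and then invokes the residual Lemma~\ref{lemmariformulato} to force $A\subset G$. Doing this for every $G\in|\mathcal{I}_B(0,1,1)|$ yields $\langle\nu_1(\eta_1(A))\rangle\subseteq\langle\nu_1(\eta_1(B))\rangle$, and symmetry gives equality; uniqueness of $G$ is then argued separately in the irreducible and reducible cases.

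Your approach bypasses the residual machinery entirely: you identify $G$ \emph{intrinsically} as $\eta_1^{-1}\bigl(\mathbb{P}(\mathrm{Im}\,T_q)\cap\nu_1(\mathbb{P}^1\times\mathbb{P}^1)\bigr)$ via the first flattening. The single linear-algebra step---that for any $C\in\mathcal{S}(Y,q)$ the vectors $\{p_l\}$ form a basis of $V_1$ (Autarky on the first factor), hence $\mathrm{Im}\,T_q=\mathrm{span}\{r_l\otimes s_l\}$---simultaneously delivers existence, uniqueness, and the inclusion $\mathcal{S}(Y,q)\subseteq\mathcal{S}(G,q)$ for \emph{every} decomposition, not just the given $A$ and $B$. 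In particular you never use the hypothesis $\sharp(A\cup B)=6$ beyond defining $S$; your proof actually establishes the stronger fact that a unique such $G$ exists and contains all of $\mathcal{S}(Y,q)$ whenever $\nu(Y)$ is the concise Segre of $q$. One small remark: Remark~\ref{sulP2aebindip} is phrased for a pair of disjoint sets, but as you note the relevant conclusion (that $\pi_1(C)$ spans $\mathbb{P}^2$) follows directly from Autarky for any single $C\in\mathcal{S}(Y,q)$, so your invocation is sound. What the paper's approach buys is methodological uniformity---the same residual-divisor technique drives most of the later proofs---whereas yours is shorter, more conceptual, and makes the dependence of $G$ on $q$ alone transparent.
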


%\begin{lemma}\label{nonidP2P1P1} 
%Let $Y=\mathbb{P}^2\times \mathbb{P}^1\times \mathbb{P}^1 $, let $q\in \sigma_3^0(\nu(Y)) $, where $\nu(Y)$ is the concise Segre of $q $. If $A,B \in \mathcal{S}(Y,q)$ are two disjoint sets evincing the rank of $q $, then $ \langle \nu_1(\eta_1(A))\rangle = \langle \nu_1(\eta_1(B))\rangle $ (where the maps $\nu_1$ and $\eta_1$ are as in Notation \ref{pi}).\end{lemma}

\begin{proof} Call $S:=A\cup B $, by Remark \ref{sulP2aebindip}, both $\pi_{1\vert A} $ and $\pi_{1\vert B} $ are injective and both $\pi_1(A) $ and $ \pi_1(B)$ are sets containing linearly independent points. So $ h^1(\mathcal{I}_A(1,0,0))=h^1(\mathcal{I}_B(1,0,0))=0$.  
%We work  with $A $ and an analogous result will hold for $B $. 
%By the Kunnet formula 
Now $h^0(\mathcal{O}_Y(0,1,1))=%h^0(\mathcal{O}_{\mathbb{P}^2}(0))h^0(\mathcal{O}_{\mathbb{P}^1}(1)) h^0(\mathcal{O}_{\mathbb{P}^1}(1))=
4$, so there exists $G \in \vert \mathcal{O}_Y(0,1,1) \vert $ containing $B $. Moreover $ S\setminus S\cap G\subseteq A$ but since $ h^1(\mathcal{I}_{A}(1,0,0))=0$ we have that $ S\subset G$. This holds for any $G \in \vert \mathcal{I}_B(0,1,1)\vert $, so $\langle \nu_1(\eta_1(A))\rangle \subset \langle \nu_1(\eta_1(B))\rangle $. The same holds exchanging the roles of $A $ and $B $, thus $ \langle \nu_1(\eta_1(A))\rangle = \langle \nu_1(\eta_1(B))\rangle $.

Assume $G $ is irreducible, then $B $ contains three linearly independent points on $G $, so the points of $ B$ are uniquely determined by $ G$. 

Assume $G $ is reducible, i.e. $G=G_1 \cup G_2 $, with $G_1 \in \vert \mathcal{O}_Y(0,1,0) \vert $ and $G_2 \in \vert \mathcal{O}_Y(0,0,1) \vert $. Remark that, by Autarky, it does not exist any $E \in \mathcal{S}(Y,q) $ which is all contained in $G_i$, for $i=1,2$, because $ G$ is a multiprojective subspace of $ Y$.
  %$E\subset G_i $.
Without loss of generality, we may assume that two points of $E$ lies in $G_1 $; then the three points of $ E$ are uniquely determined by a reducible conic, i.e. by the reducible element $G=G_1 \cup G_2 $  that contains them.
\end{proof}

\begin{corollary}\label{P2P1P1}
If $q\in \sigma_3^0(\nu(\mathbb{P}^{2}\times \mathbb{P}^{1}\times \mathbb{P}^{1}))$ is such that there exist two disjoint sets  $A, B\in \mathcal{S}(Y,q)$, then $q$ can be either as in Example \ref{caso3} and  $ \dim(\mathcal{S}(Y,q))= 3 $ or as in Example \ref{caso4} and  $ \dim(\mathcal{S}(Y,q))= 4 $.
\end{corollary}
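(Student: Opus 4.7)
The plan is to invoke Proposition \ref{nonidesempi} to replace $\mathcal{S}(Y,q)$ with $\mathcal{S}(G,q)$ for a uniquely determined divisor $G \in |\mathcal{O}_Y(0,1,1)|$, and then case-split according to whether $G$ is irreducible or reducible, matching each case to one of the examples.

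First I would observe that since $A,B \in \mathcal{S}(Y,q)$ both have cardinality $r_X(q)=3$ and are disjoint, the union $S=A\cup B$ has cardinality $6$; hence Proposition \ref{nonidesempi} applies and produces a unique $G \in |\mathcal{O}_Y(0,1,1)|$ containing $S$, together with the identification $\mathcal{S}(Y,q)=\mathcal{S}(G,q)$. The linear system $|\mathcal{O}_Y(0,1,1)|$ parametrizes divisors of bidegree $(1,1)$ on the $\mathbb{P}^1\times \mathbb{P}^1$ part of $Y$, so $G$ is either irreducible or splits as $G = G_1 \cup G_2$ with $G_1 \in |\mathcal{O}_Y(0,1,0)|$ and $G_2 \in |\mathcal{O}_Y(0,0,1)|$.

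In the irreducible case, $G \simeq \mathbb{P}^2 \times \mathbb{P}^1$ and the restriction of $\mathcal{O}_Y(1,1,1)$ to $G$ is $\mathcal{O}_{\mathbb{P}^2\times \mathbb{P}^1}(1,2)$ (as in Remark \ref{dimensione caso2}); this is exactly the Segre--Veronese configuration of Example \ref{caso3}, so $q$ falls into that case and $\dim \mathcal{S}(Y,q) = \dim \mathcal{S}(G,q) = 3$. In the reducible case $G = G_1\cup G_2$, Proposition \ref{dimensione caso3} places us directly in the setting of Example \ref{caso4}, which immediately supplies the lower bound $\dim \mathcal{S}(G,q)\geq 4$. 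For the matching upper bound, I would use autarky: no $E \in \mathcal{S}(G,q)$ can lie entirely in a single component $G_i$, since then $q \in \langle \nu(G_i)\rangle$ would force the concise Segre of $q$ to sit inside $\nu(G_i) \subsetneq \nu(Y)$, contradicting conciseness. Consequently every $E$ splits as two points in one component and one in the other, producing exactly the two families already described in Example \ref{caso4}; a dimension count built on the non-defectivity of $\nu(G_i) \simeq \mathbb{P}^2 \times \mathbb{P}^1$ in $\langle \nu(G_i)\rangle = \mathbb{P}^5$, together with the equality $\mathcal{J}_i = \mathrm{Join}(\sigma_2(\nu(G_i)),\nu(G_j)) = \mathbb{P}^8$ from Proposition \ref{dimensione caso3}, bounds each 2+1 family by $4$ and yields $\dim \mathcal{S}(Y,q) = 4$.

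The main obstacle is the dimension equality in the reducible case: Example \ref{caso4} only records the lower bound $\geq 4$, so the upper bound must be extracted from a fiber-dimension analysis of the natural map $\bigl(\nu(G_i)\bigr)^{(2)}\times \nu(G_j) \to \mathcal{J}_i = \mathbb{P}^8$, using that $\sigma_2(\nu(G_i)) = \langle \nu(G_i)\rangle$. Once this is in place, the two cases irreducible/reducible correspond cleanly to Examples \ref{caso3} and \ref{caso4}, completing the classification.
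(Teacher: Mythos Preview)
Your approach is essentially the paper's: the paper's proof is the single sentence ``This is a direct consequence of the uniqueness of the $G \in |\mathcal{O}_Y(0,1,1)|$ s.t.\ $\mathcal{S}(Y,q)=\mathcal{S}(G,q)$ in Proposition \ref{nonidesempi}'', and you are spelling out exactly that---invoke Proposition \ref{nonidesempi}, then split on whether the unique $G$ is irreducible (Example \ref{caso3}) or reducible (Example \ref{caso4}).

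Where you go beyond the paper is in worrying about the \emph{equality} $\dim\mathcal{S}(Y,q)=4$ in the reducible case. You are right that Example \ref{caso4} only records $\dim\mathcal{S}(G,q)\ge 4$, and the paper's one-line proof does not supply the matching upper bound; in fact the main Theorem \ref{main_theorem}, item \ref{4.}, retreats to the weaker statement ``$\mathcal{S}(Y,q)$ contains two different $4$-dimensional families''. So the extra fiber-dimension argument you sketch is not something the paper carries out, and the exact value $4$ in the corollary is not fully justified in the paper either. Your proposed route (autarky forces a $2+1$ split across $G_1,G_2$, then bound each family via the join description of Proposition \ref{dimensione caso3}) is a reasonable way to close that gap, though it is additional work rather than a divergence in strategy.
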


\begin{proof}
This is a direct consequence of the uniqueness of the $G \in |\mathcal{O}_Y(0,1,1)|$ s.t. $\mathcal{S}(Y,q)=\mathcal{S}(G,q)$ in Proposition \ref{nonidesempi}.
\end{proof}

\begin{proposition}\label{x1.1}
Let $ Y':=\mathbb{P}^1\times \mathbb{P}^1\times \{ u_3\} \times \cdots \times \{ u_k\}$ be a proper subset of $Y=\mathbb{P}^{n_1}\times \cdots \times \mathbb{P}^{n_k} $, $k\geq 2 $. Take $q' \in \langle \nu(Y') \rangle\setminus \nu(Y') $, $A\in \mathcal{S}(Y',q') $ and $p\in Y \setminus Y' $. Assume that $ Y$ is the minimal multiprojective space containing $ A\cup \{ p\}$ and take $q \in \langle  \{q',\nu(p) \}  \rangle \setminus \{q' , \nu(p) \}  $. 
\begin{enumerate}
    \item\label{f1} $\sum_{i=1}^k n_i\geq3$,  $n_1,n_2\leq 2$, $n_3, \ldots , n_k\leq 1$ and if $k\geq 3 $ then $ r_{\nu (Y)}(q)>1$;
    \item\label{f2} If $k\geq 3$ and $\sum_{i=1}^k n_i\geq 4$ then $r_{\nu(Y)}(q)=3$ and $\mathcal{S}(Y,q)=\{ \{p\} \cup A\}_{A\in \mathcal{S}(Y',q')}$.
  
    \item\label{f3} $\nu(Y)$ is the concise Segre of $ q$.
    
\end{enumerate}

\end{proposition}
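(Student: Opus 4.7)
The plan is to establish the three claims in sequence, combining structural analysis of the minimal multiprojective space with flattening/concision arguments and a careful use of Lemma \ref{lemmariformulato}.

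For part (1), I first compute the minimal $Y$. Writing $A=\{a,a'\}$ with $a,a'\in Y'$, the concision of $q'$ on $Y'$ forces $a_i\neq a_i'$ for $i=1,2$, so the $i$-th factor of $Y$ is $\langle a_i,a_i',p_i\rangle$ (dimension at most $2$, whence $n_i\leq 2$) for $i=1,2$, and $\langle u_i,p_i\rangle$ (dimension at most $1$, whence $n_i\leq 1$) for $i\geq 3$. Since $p\notin Y'$, at least one coordinate of $p$ contributes an extra unit beyond the baseline $n_1,n_2\geq 1$, giving $\sum n_i\geq 3$. For the rank bound $r_{\nu(Y)}(q)>1$ when $k\geq 3$: if $q=\nu(w)$ for some $w\in Y$, then $q'\in \langle \nu(w),\nu(p)\rangle$; $w=p$ would contradict $r(q')=2$, while $w\neq p$ makes $\{w,p\}$ a rank-$2$ decomposition of $q'$ in $Y$, forcing $\{w,p\}\subseteq Y'$ by Concision/Autarky (Lemma \ref{concision}), contradicting $p\notin Y'$.

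For part (2), the upper bound $r(q)\leq 3$ is immediate from $q\in \langle \nu(\{p\}\cup A)\rangle$. To show $r(q)\geq 3$, I argue by contradiction: suppose $r(q)=2$ with $B=\{b_1,b_2\}$ evincing the rank, and write $q=q_0\otimes u_3\otimes\cdots\otimes u_k+c\,\nu(p)$ with $q_0\in V_1'\otimes V_2'$ of matrix rank $2$. The image of the factor-$1$ flattening of $q$ equals $V_1'+\langle p_1\rangle$; if $n_1=2$ this is $3$-dimensional, violating the bound $\leq 2$ imposed by a rank-$2$ decomposition, and the case $n_2=2$ is symmetric. The remaining case $n_1=n_2=1$ forces at least two indices $i,j\geq 3$ with $p_i\neq u_i$ and $p_j\neq u_j$, so the concise Segre of $q$ is $(\mathbb{P}^1)^{\times m}$ with $m\geq 4$; Proposition \ref{sigma20} makes the rank-$2$ decomposition unique, and expanding $q=T_1+T_2$ in the bases $\{u_\ell,p_\ell\}$ for $\ell\in\{i,j\}$ (so that the $(u_i,u_j)$-slice equals $q_0$, the $(p_i,p_j)$-slice equals $c\,p_1\otimes p_2$, and the mixed slices vanish) leads, via a case analysis on the rank-$1$ outer factors $(b_s)_1\otimes(b_s)_2$, to the contradiction that a rank-$1$ matrix equals the rank-$2$ matrix $q_0$.

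With $r(q)=3$, the inclusion $\mathcal{S}(Y,q)\supseteq\{\{p\}\cup A:A\in \mathcal{S}(Y',q')\}$ is direct. For the reverse inclusion, take $C=\{c_1,c_2,c_3\}\in \mathcal{S}(Y,q)$ and write $q=\sum_{i=1}^{3}\alpha_i\nu(c_i)$. If $p\in C$, say $c_1=p$, then $q'=(\alpha_1-c)\nu(p)+\alpha_2\nu(c_2)+\alpha_3\nu(c_3)$; since $\nu(p)\notin \langle \nu(Y')\rangle$ while $q'\in \langle \nu(Y')\rangle$, the component along $\nu(p)$ must vanish, so $\alpha_1=c$ and $\{c_2,c_3\}$ is a rank-$2$ decomposition of $q'$ placed in $\mathcal{S}(Y',q')$ by Concision/Autarky. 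If $p\notin C$, I apply Lemma \ref{lemmariformulato} to $A^{*}=\{p\}\cup A$ and $B^{*}=C$ with a divisor $D$ containing $A$ but not $p$ (for example $D=\{\pi_j=u_j\}\in |\mathcal{O}_Y(\varepsilon_j)|$ for some $j\geq 3$ with $p_j\neq u_j$, or the analogous hyperplane in factor $1$ when $n_1=2$); verifying $h^1(\mathcal{I}_{S\setminus S\cap D}(\hat{\varepsilon_j}))=0$ on the residual scheme forces $A^{*}\cup C\subseteq D$, contradicting $p\in A^{*}$ and $p\notin D$. Finally, part (3) is immediate from Remark \ref{remarkconcision}: the concise Segre of $q$ equals $\prod_i\langle \pi_i(\{p\}\cup A)\rangle$, which by the minimality hypothesis is $Y$.

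The principal obstacle is the slice analysis ruling out $r(q)=2$ in the all-$\mathbb{P}^1$ case of part (2), combined with the cohomology verification in the $p\notin C$ subcase of the uniqueness; both steps hinge on the right choice of divisor and a patient tracking of the basis-dependent coordinates of $q$.
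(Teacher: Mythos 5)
Your part (1) and the rank lower bound in part (2) are essentially sound: the flattening argument for $n_1=2$ or $n_2=2$ and the slice analysis in the all-$\mathbb{P}^1$ case form a legitimate, and arguably more elementary, alternative to the paper's inductive route (though you should justify that the image of the factor-$1$ flattening is all of $V_1'+\langle p_1\rangle$ rather than a proper subspace; this uses $p_j\neq u_j$ for some $j\geq 3$). The genuine gaps are both in the uniqueness claim $\mathcal{S}(Y,q)\subseteq\{\{p\}\cup A\}_{A\in\mathcal{S}(Y',q')}$, which is the heart of item (2) and the input for item (3).

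In the subcase $p\in C$ you assert that ``the component along $\nu(p)$ must vanish'' because $\nu(p)\notin\langle\nu(Y')\rangle$ while $q'\in\langle\nu(Y')\rangle$. This is not a valid deduction: $\nu(c_2)$ and $\nu(c_3)$ need not lie in $\langle\nu(Y')\rangle$ either, so there is no direct-sum decomposition in which ``the component along $\nu(p)$'' of the identity $\lambda q'=(\alpha_1-c)\nu(p)+\alpha_2\nu(c_2)+\alpha_3\nu(c_3)$ makes sense. The step can be repaired --- if $\alpha_1\neq c$ then $\lambda q'-(\alpha_1-c)\nu(p)=\alpha_2\nu(c_2)+\alpha_3\nu(c_3)$ is a point of $\langle\{q',\nu(p)\}\rangle\setminus\{q',\nu(p)\}$ of rank at most $2$, contradicting the rank-$3$ statement you have just proved for every point of that punctured line --- but that argument is not the one you wrote.

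More seriously, in the subcase $p\notin C$ you pick a divisor $D$ containing $A$ but not $p$ and say that ``verifying $h^1(\mathcal{I}_{S\setminus S\cap D}(\hat{\varepsilon}_j))=0$'' forces $S\subseteq D$, hence a contradiction. There is no reason for this $h^1$ to vanish: the residual set $\{p\}\cup(C\setminus C\cap D)$ may consist of up to four points in arbitrary position (so even Lemma \ref{3points} does not apply directly), and Lemma \ref{lemmariformulato} gives nothing when the cohomology does not vanish. The entire difficulty of the paper's proof --- the induction on the number of factors in steps (A)--(D), the classification via Lemma \ref{3points} of the configurations with $h^1>0$, and the freedom to vary $A$ inside $\mathcal{S}(Y',q')$ and to vary the divisor --- is devoted precisely to disposing of the branch $h^1>0$, which your argument silently omits. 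As written, the uniqueness of the decompositions is not proved.
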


\begin{proof}

First of all remark that $r_{\nu(Y)}(q)>1,$ otherwise there exists $o\in Y$ s.t.  $q =\nu (o)$ and $q' \in \langle \nu (\{o,p\})\rangle$. Since $r_{\nu(Y)}(q') =2$,
we would have $\{o,p\}\in \mathcal{S} (Y,q')$ and by Autarky we get $\{o,p\}\subset Y'$, contradicting the assumption $p\notin Y'$.
\\
The fact that $ n_1+\cdots +n_k\geq 3$ is obvious from the fact that $p \notin Y' $ so $ Y\neq Y'$.
\\
Since $q' $ is a $2 \times 2 $ matrix of rank $2$,  $\dim \mathcal{S}(Y',q')=2 $ and $Y' $ is the minimal multiprojective subspace of $Y $ containing $ A$,  the minimal multiprojective subspace containing $Y'\cup \{ p\} $ is $Y $. So since $\mathbb{P}^{n_i}=\langle \pi_i(Y'\cup \{ p \} )\rangle  $, we get 
$1\leq n_i \leq 2$ for  $i=1,2$ and $n_i=1$ for all $i>2$. 
This ends item \ref{f1}.

Item  \ref{f3} will be 
a consequence of item \ref{f2}, in fact if the structure of the elements on $\mathcal{S}(Y,q)$ is of type $A\cup \{p\}$ with $A\in \mathcal{S}(Y',q')$, then Autarky and the fact that $Y$ is the minimal multiprojective space containing $A\cup \{p\}$ will imply that $\nu (Y)$ is the concise Segre of $q$. So let us prove item \ref{f2}.
 
The proof is by induction on the number of factors. Step \ref{esc1} is the basis of induction for the case in which $Y$ has at least one factor of projective dimension 2 ($k=3$), Step \ref{esc2} is the basis of induction for the case in which all the factors of $Y$ have projective dimension 1 ($k=4$), Steps \ref{esc2induzione} and \ref{esc1induzione} are the induction processes of Step \ref{esc2} and Step \ref{esc1} respectively.\\ 
Let $E\in \mathcal{S}(Y,q)$, if we will show that $E\supset \{ p \} $ and that there exists $B \in \mathcal{S}(Y',q') $, such that $E=B\cup \{p \} $, we will be done. Assume that there is no $ B\in \mathcal{S}(Y',q') $ such that $E=B\cup \{p \} $. Fix any $A\in \mathcal{S}(Y',q') $ and set $S:=A\cup \{ p \} \cup E $.

\begin{enumerate}[leftmargin=+.2in, label=(\Alph*)]

%+
    \item \label{esc1} {[Case $k=3$, ${n_1=2}$, ${n_2=n_3=1}$]} First assume $p\in E$ and set $E':= E\setminus \{p\}$ and $F= A\cup E'$. 
Since $\cap _{B\in \mathcal{S} (Y,q')}\eta _3(B)=\emptyset$, taking another $A\in
\mathcal{S} (Y,q')$ if necessary we may assume $\eta _3(A)\cap \eta _3(E') =\emptyset$.  Set $\{D\}:= |\mathcal{I} _p(0,0,1)|$. By Lemma \ref{lemmariformulato}, we have $h^1(\mathcal{I} _{S\setminus S\cap D}(1,1,0)) >0$ and
hence (since $\sharp F\le 4$)  $h^0(\mathcal{I} _{S\setminus S\cap D}(1,1,0))\ge 3$. This must be true for all $A\in \mathcal{S} (Y',q')$ and
hence we have $h^0(Y_3,\mathcal{I} _{\eta _3(Y')\cup \eta _3(E')}(1,1)) \ge 3$. Since $\eta_3(Y')\in \vert \mathcal{O}_{Y_3}(1,1) \vert $ we have $h^0(Y_3,\mathcal{I}_{\eta_3(Y')}(1,1)) =1$, contradicting the previous inequality.

From now on suppose $p\notin E$.
As above we may assume $\eta _3(A)\cap \eta _3(E) =\emptyset$.

Fix $o\in E$. Since $h^0(\mathcal{O} _Y(1,1,0)) =6$ and $\sharp A \cup \{ p \} \cup \{ o\}= 4$ there is $G\in |\mathcal{O} _Y(1,1,0)|$ containing $A\cup \{p\} \cup
\{o\}$. Assume for the moment $S\nsubseteq G$, i.e. $E\nsubseteq G$. We have $h^1(\mathcal{I} _{S\setminus S\cap G}(0,0,1)) >0$, thus  $\sharp E\ge 3$. Since $\sharp E \le 3$, we get $\sharp E =3$ (and hence $q$ has rank $3$ and $\nu (Y)$ is the concise Segre containing $q$), $S\setminus S\cap G =E\setminus
\{o\}$ and $\sharp \pi _3(E\setminus \{o\}) =1$. Taking a different $o\in E$ we get $\sharp \pi _3(E) =1$, i.e. $\nu (Y)$ is
not the concise Segre of $q$, a contradiction.

Now assume $S\subset G$.   Since this must be true for
all
$G\in |\mathcal{I} _{A\cup
\{p,o\}}(1,1,0)|$, we get
$|
\mathcal{I} _{A\cup
\{p,o\}}(1,1,0)| \supseteq |\mathcal{I} _{\{p\}\cup E}(1,1,0)| \ne \emptyset$. Note that $\eta _3(Y')\in \vert \mathcal{O} _{Y_3}(1,1)\vert $ and hence
$h^0(Y_3,\mathcal{I} _{\eta _3(Y')}(1,1)) =1$. Since $n_1=2$ and $Y$ is the minimal multiprojective space containing $q$, we have
$\eta _3( p)\notin \eta _3(Y')$. Thus $h^0(Y_3,\mathcal{I} _{\eta _3(Y')\cup \{\eta _3( p)\}}(1,1)) =0$, a contradiction since $|\mathcal{I} _{A\cup\{p,o\}}(1,1,0)|\neq \emptyset$.

%++
\item \label{esc2}  {[Case $k=4$, ${n_1=n_2=n_3=n_4=1}$]}
 Fix $G\in |\mathcal{O} _Y(0,0,1,1)|$ containing $E$. Assume $S\nsubseteq G$. Since $S\setminus E=A\cup \{ p\} $, by Lemma \ref{lemmariformulato}, we have $h^1(\mathcal{I} _{A \cup \{ p\} }(1,1,0,0)) >0$. Call $p'$ the projection of $p$ 
via $Y\to Y'$. Since $\mathcal{O} _{\mathbb{P}^1\times \mathbb{P}^1}(1,1)$ is very ample we get that either $p'\in A$ or that $\sharp (\pi
_i(A\cup \{p'\})) =1$ for some $i\in \{1,2\}$. The second possibility is excluded, because $\sharp (\pi _1(A))=\sharp (\pi
_2(A)) =2$ for any $A\in \mathcal{S} (Y',q')$. The first possibility is excluded taking instead of $A$ another general $A_1\in \mathcal{S}
(Y',q')$. Now assume $S\subset G$. We get $A\subset G$. 
This is ruled out taking another $A\in \mathcal{S} (Y',q')$ since a general
$a\in Y'$ is contained in some $B\in \mathcal{S} (Y',q')$. Thus we would have that $ Y'\subset G$ which is a contradiction.

%+3

\item \label{esc2induzione} {[Case $k\ge 5$, ${n_i=1}$ for all $i$'s]} We exclude this case by induction on $k$, the base case $k=4$ being excluded in \ref{esc2}. Fix $o\in \mathbb{P}^1\setminus \{p_k,u_k\}$, set
$M:= \pi _k^{-1}(o)$,  i.e. $M=(\mathbb{P}^1)^{\times k-1}\times \{ o\} $ and call $\Lambda := \langle \nu (M)\rangle$. Note that $(Y'\cup \{p\})\cap M=\emptyset$.  %We have 
Denote by $r=2^k-1$ and define $r':= \dim \Lambda = 2^{k-1}-1$.

Consider the following linear projection form $\Lambda $:
\begin{equation}\label{proj:l}
    \ell : \mathbb{P}^r\setminus \Lambda \to \mathbb{P}^{r'}.
    \end{equation}
Note that $\nu (Y) \cap \Lambda = \nu (Y_k)\times \{o\}$ and that $\ell _{|\nu (Y)\setminus M} =$ $\nu _k(\eta_k(Y\setminus M))$. We identify $\mathbb{P}^{r'}$ with the target projective space of $Y_k$. Since $(Y'\cup \{p\})\cap M = \emptyset$, $\ell$ is well-defined on $Y'\cup\{p \} $ and it acts as the composition of $\eta _k$ and the Segre embedding.

By the inductive assumption $\mathcal{S} (Y_k,\ell (q)) =\{ B \cup \eta _k({p}) \}_{B\in \mathcal{S} (\eta_k(Y'),\eta_k(q'))}$. Thus for any $E\in \mathcal{S} (Y,q)$ there is $B\in \mathcal{S}(Y',q')$
such that $\eta_k(E) =\eta_k(B\cup \{p\})$.  Since $\eta_{k|E}$ is injective by Remark  \ref{retta} and $\mathcal{S} (Y,q)\supseteq \{B\cup \{p\}\}_{B\in \mathcal{S} (Y',q')}$, we get $\mathcal{S} (Y,q) =\{B\cup \{p\}\}_{B\in \mathcal{S} (Y',q')}$.

%+4

\item \label{esc1induzione} {[Case $k\ge 3$, ${n_1=2}$, $n_1+\cdots +n_k \ge 5$]} 
If only one of the factors is a $\mathbb{P}^2 $ we use Step \ref{esc1} as base of the induction and then we construct a projection similar to \eqref{proj:l}. Indeed $Y=\mathbb{P}^2\times (\mathbb{P}^1)^{k-1} $, where $k\geq 4 $. Fix $o \in \mathbb{P}^1 \setminus \{ p_k, u_k\} $, set $M:=\pi_k^{-1}(o) $ and define $ \Lambda:= \langle \nu(M)\rangle$. Denote $ r=3\cdot 2^{k-1}-1$ and $r'=\dim \Lambda:=3\cdot 2^{k-2}-1 $. We consider the linear projection $\ell: \mathbb{P}^r \setminus \Lambda \to \mathbb{P}^{r'} $ which acts as the composition of $ \eta_k$ and the Segre embedding. By the inductive assumption $\mathcal{S} (Y_k,\ell (q)) =\{ B \cup \eta _k({p}) \}_{B\in \mathcal{S} (\eta_k(Y'),\eta_k(q'))}$.
Thus for any $E\in \mathcal{S} (Y,q)$ there is $B\in \mathcal{S}(Y',q')$
such that $\eta_k(E) =\eta_k(B\cup \{p\})$.  Since $\eta_{k|E}$ is injective by Remark  \ref{retta} and $\mathcal{S} (Y,q)\supseteq \{B\cup \{p\}\}_{B\in \mathcal{S} (Y',q')}$, we get $\mathcal{S} (Y,q) =\{B\cup \{p\}\}_{B\in \mathcal{S} (Y',q')}$.\\
Now assume also $n_2=2 $, so that we must have $k\geq 3 $. 
Let $Y=\mathbb{P}^2\times \mathbb{P}^2\times (\mathbb{P}^1)^{k-2} $ and fix $o\in \mathbb{P}^2\setminus \pi _2(Y')$. Set $M:= \pi _2^{-1}(o)$,  %i.e. $M=\mathbb{P}^2\times \{ o\} \times \mathbb{P}^1$ 
and $\Lambda:= \langle \nu (M)\rangle$. Then $r=9\cdot 2^{k-2}-1$, $\dim \Lambda =9\cdot 2^{k-3}-1$. Let $ r':=9\cdot 2^{k-3}-1$ and consider the linear projection
 $\ell : \mathbb{P}^r\setminus \Lambda \to \mathbb{P}^{r'}$ from $\Lambda$ which acts on $\nu (Y)$ as the composition of the Segre embedding and the map $\mathbb{P}^2\times \mathbb{P}^2\times (\mathbb{P}^1)^{k-2}\setminus \mathbb{P}^2\times \{o\}\times (\mathbb{P}^1)^{k-2} \to \mathbb{P}^2\times (\mathbb{P}^1)^{k-1}$, which is the linear projection $\mathbb{P}^2\setminus \{o\}\to \mathbb{P}^1$ on the second factor and the identity on any other factor. Since $(Y'\cup \{p\})\cap M =\emptyset$, $\ell (q)$ is well-defined. We conclude since we already proved the statement in the case where only one of the factors is a $ \mathbb{P}^2$.

\end{enumerate}\vspace{-0,8cm}\end{proof}

\section{Lemmas}
In this section we collect the basic lemmas that we will need all along the proof of the main theorem of the present paper, Theorem \ref{main_theorem}.

\medskip 

The following two lemmas describe two very basic properties that two different sets $A$ and $B$ evincing the rank of the same rank-3 point $q$ have to satisfy.

\begin{lemma} \label{dimS2ranknot3}
 Let $q$ be a not-identifiable tensor  and let $A$ and $B$ two distinct sets evincing the rank of $q$. Define $S:=A \cup B $.
If $\sharp (S) \geq 5 $ and $\dim\langle \nu(S) \rangle =2 $, then the rank of $q$ cannot be 3.
\end{lemma}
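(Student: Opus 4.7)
The plan is to argue by contradiction, assuming $r_X(q)=3$. Then $\sharp A=\sharp B=3$, and each of $\nu(A),\nu(B)$ is a linearly independent triple, so each spans a projective plane containing $q$. Both of these planes sit inside the $2$-plane $\langle \nu(S)\rangle$, so they coincide: set $\Pi:=\langle\nu(A)\rangle=\langle\nu(B)\rangle=\langle\nu(S)\rangle$. If $\Pi\subseteq X$ then $q$ would have rank $1$, contradiction; so $\Pi\not\subseteq X$.

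Since $X$ is cut out by quadrics (as already exploited in the proof of Proposition \ref{sigma20}), I would choose a quadric $Q$ in the ideal of $X$ with $Q|_\Pi\not\equiv 0$ and form the conic $C:=V(Q|_\Pi)\subseteq\Pi$; then $\nu(S)\subseteq X\cap\Pi\subseteq C$. The next step is to rule out $C$ being reducible: if $C=L_1\cup L_2$ with the $L_i$ lines of $\Pi$, each $L_i$ meets $X$ in a scheme of degree at most $2$ (either because $L_i\subseteq X$, in which case Remark \ref{retta} gives $\leq 1$ point of $\nu(A)$ and $\leq 1$ of $\nu(B)$ on $L_i$, or because $L_i\not\subseteq X$ and $X$ is cut by quadrics). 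So each $L_i$ contains at most $2$ points of $\nu(S)$, giving $\sharp S\leq 4$ and contradicting $\sharp S\geq 5$. Hence $C$ is a smooth conic.

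The heart of the argument is to show $C\subseteq X$. The exact sequence $0\to\mathcal{O}_\Pi\to\mathcal{O}_\Pi(2)\to\mathcal{O}_C(2)\to 0$ on $\Pi\cong\mathbb{P}^2$ shows that the restriction $H^0(\mathcal{O}_\Pi(2))\twoheadrightarrow H^0(\mathcal{O}_C(2))\cong H^0(\mathcal{O}_{\mathbb{P}^1}(4))$ has $1$-dimensional kernel consisting of multiples of $C$. Since $\nu(S)\subseteq C$ consists of $\geq 5$ distinct points, it imposes the full $5$ independent conditions on $H^0(\mathcal{O}_{\mathbb{P}^1}(4))\cong\mathbb{C}^5$, forcing $h^0(\mathcal{I}_{\nu(S),\Pi}(2))=1$. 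Thus $C$ is the unique conic of $\Pi$ through $\nu(S)$, and every quadric in the ideal of $X$ restricts to $\Pi$ as a scalar multiple of the equation of $C$ (possibly zero). In either case, its zero locus on $\Pi$ contains $C$, so intersecting over all such quadrics gives $C\subseteq X\cap\Pi\subseteq X$.

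To conclude, a smooth plane conic $C$ satisfies $\sigma_2(C)=\Pi$: through any $p\in\Pi$ all but two lines of the pencil of lines through $p$ meet $C$ in two distinct points. Therefore $q\in\Pi$ can be written as a sum of two rank-$1$ tensors lying on $C\subseteq X$, giving $r_X(q)\leq 2$, the desired contradiction. The main obstacle I expect is the cohomological identification of $C$ as the unique conic of $\Pi$ through $\nu(S)$; this is exactly where the bound $\sharp S\geq 5$ is used, since $4$ points on a conic of $\mathbb{P}^2$ would allow a $2$-dimensional family of conics through them and would not force $C\subseteq X$.
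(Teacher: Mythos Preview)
Your proof is correct and follows the same strategy as the paper: both show that the plane $\Pi=\langle\nu(S)\rangle$ meets $X$ in a conic whose second secant variety fills $\Pi$, forcing $r_X(q)\le 2$. Your version is considerably more rigorous---you justify via the cohomology of $\mathcal{O}_C(2)$ that the conic through $\nu(S)$ is unique (hence lies in $X$) and you dispose of the reducible case separately using Remark~\ref{retta}, whereas the paper simply asserts that $\Pi\cap X$ contains a conic and that $\sigma_2$ of any (possibly reducible) conic is the ambient plane.
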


\begin{proof}
Assume the existence of such a rank-3 tensor  $q $ with 2 distinct decompositions $A$ and $B$ s.t. $\sharp (A \cup B) \geq 5 $. 
The plane $\langle \nu(S) \rangle $  contains at least five not-collinear points. Note that $\langle \nu(S) \rangle \not\subseteq X $, otherwise also $q\in X $ which contradicts $r_X(q)=3 $. So $ \langle \nu(S) \rangle \cap X $ contains a conic $\mathcal{C}$. Either if it is reduced or not, the two secant variety of $ \mathcal{C}$ fills $\langle \nu(S) \rangle=\mathbb{P}^2$. So $r_X(q) \leq 2 $ , which is an absurd.
\end{proof}

\begin{lemma}\label{cardinalitatra5e6} 
Let $q$ be a not-identifiable rank-3 tensor and let $A,B \in \mathcal{S}(Y,q)$ be distinct. Then $\sharp (A \cap B) \leq 1 $.
\end{lemma}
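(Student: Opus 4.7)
The plan is to argue by contradiction: assume $\sharp(A\cap B)\ge 2$. Since $A\ne B$ both have cardinality $3$, we must have $\sharp(A\cap B)=2$, so we can write $A=\{x,y,z_1\}$, $B=\{x,y,z_2\}$ with $z_1\ne z_2$, and set $S:=A\cup B$, $\sharp S=4$.

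First I would observe that $\langle \nu(A)\rangle=\langle \nu(B)\rangle =: \Pi$ is a single plane. Indeed both are $2$-planes containing the line $L:=\langle \nu(x),\nu(y)\rangle$ and the point $q$, so if they were distinct their intersection would equal $L$, forcing $q\in L$ and $r_X(q)\le 2$, contradicting $r_X(q)=3$. In particular $\nu(z_2)\in\Pi$, and one may expand $\nu(z_2)=a\nu(x)+b\nu(y)+c\nu(z_1)$. The next step is to rule out $a=0$, $b=0$ or $c=0$: each would place three of the four points $\nu(x),\nu(y),\nu(z_1),\nu(z_2)$ on a common line; since $X$ is cut out by quadrics, any line meeting $X$ in three points is contained in $X$, and such a line would meet $\nu(A)$ in at least two points, contradicting Remark \ref{retta}. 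As a byproduct $L\not\subset X$, so by the same quadric argument $L\cap X=\{\nu(x),\nu(y)\}$.

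The core of the proof is Lemma \ref{lemmariformulato}: I will exhibit an effective divisor $D\in|\mathcal{O}_Y(\varepsilon)|$ containing $A\cap B=\{x,y\}$ but not all of $S$, with $h^1(\mathcal{I}_{S\setminus S\cap D}(\hat\varepsilon))=0$, yielding the contradictory conclusion $S\subset D$. I would split cases according to whether some projection satisfies $\pi_i(x)=\pi_i(y)$. If yes, Autarky for $A$ forces $n_i=1$ (otherwise $\pi_i(A)$ would have at most two distinct points and fail to span $\mathbb{P}^2$) and $\pi_i(z_1)\ne\pi_i(x)$, and similarly $\pi_i(z_2)\ne\pi_i(x)$ by Autarky for $B$; then $D=\pi_i^{-1}(\pi_i(x))\in|\mathcal{O}_Y(\varepsilon_i)|$ contains $\{x,y\}$ but avoids $\{z_1,z_2\}$, and the residual $\{z_1,z_2\}$ imposes independent conditions on $|\mathcal{O}_Y(\hat\varepsilon_i)|$ exactly when $\eta_i(z_1)\ne \eta_i(z_2)$, i.e.\ when $z_1,z_2$ differ in some factor other than $i$. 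If instead $\pi_i(x)\ne\pi_i(y)$ for every $i$, one works analogously with $D=\pi_i^{-1}(H_i)$ for the line $H_i$ through $\pi_i(x),\pi_i(y)$ whenever some $n_i=2$ (Autarky for $A,B$ forces $\pi_i(z_1),\pi_i(z_2)\notin H_i$), and with a suitable $D\in|\mathcal{O}_Y(\varepsilon_i+\varepsilon_j)|$ when $Y=(\mathbb{P}^1)^k$.

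In every branch the only remaining obstruction is the degenerate configuration in which $z_1,z_2$ agree in every coordinate except one. In that case $\langle \nu(z_1),\nu(z_2)\rangle$ lies inside a fibre-direction linear subspace $\mathbb{P}^{n_i}\subset X$ of the Segre, hence entirely inside $X$. Both this line and $L$ lie in the plane $\Pi$, so they meet in a point $p\in L\cap X=\{\nu(x),\nu(y)\}$; thus one of $\nu(x),\nu(y)$ is collinear with $\nu(z_1),\nu(z_2)$ on a line contained in $X$, and this line meets $\nu(A)$ in at least two points, once again contradicting Remark \ref{retta}. The hard part is the case-by-case bookkeeping that ensures every configuration falls either into an $h^1=0$ branch where Lemma \ref{lemmariformulato} applies, or into this fibre degeneration handled by the final planar argument.
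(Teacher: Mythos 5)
Your strategy is genuinely different from the paper's and is mostly sound, but as written it has a gap in the case $Y=(\mathbb{P}^1)^{\times k}$ with $\pi_i(x)\neq\pi_i(y)$ for every $i$: there you invoke ``a suitable $D\in|\mathcal{O}_Y(\varepsilon_i+\varepsilon_j)|$'' without producing it. For a given pair $\{i,j\}$ two things must hold. First, $|\mathcal{I}_{\{x,y\}}(\varepsilon_i+\varepsilon_j)|$ must contain a divisor missing at least one of $z_1,z_2$; since the base locus of this system on the quadric $\nu(\mathbb{P}^1_i\times\mathbb{P}^1_j)$ is exactly the pair of images $\bar{x},\bar{y}$ of $x,y$ (the line $\langle \bar{x},\bar{y}\rangle$ is not contained in the quadric because $x,y$ differ in both factors), this fails precisely when both $(\pi_i(z_l),\pi_j(z_l))$, $l=1,2$, coincide with $(\pi_i(x),\pi_j(x))$ or $(\pi_i(y),\pi_j(y))$. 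Second, when the chosen $D$ misses both $z_1$ and $z_2$, the residual $\{z_1,z_2\}$ must have $h^1(\mathcal{I}_{\{z_1,z_2\}}(\hat{\varepsilon}))=0$ for $\varepsilon=\varepsilon_i+\varepsilon_j$, i.e.\ $z_1,z_2$ must differ in some factor outside $\{i,j\}$. (Note that if $D$ misses exactly one of them the residual is a single point and the contradiction is automatic.) Showing that for every configuration not of your ``fibre degeneration'' type some pair $\{i,j\}$ passes both tests is a genuine combinatorial argument that you have not carried out, and the branches you do complete (some $\pi_i(x)=\pi_i(y)$, or some $n_i=2$) do not cover it. Everything else --- the reduction to $\sharp(A\cap B)=2$, the identity $\langle\nu(A)\rangle=\langle\nu(B)\rangle$, the use of Autarky to push $z_1,z_2$ off the chosen divisor, and the closing planar argument via Remark \ref{retta} --- is correct.

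The irony is that your closing planar argument, pushed one step further, proves the whole lemma with no case distinctions, and this is exactly what the paper does. In the common plane $\Pi$ the lines $L=\langle\nu(x),\nu(y)\rangle$ and $M=\langle\nu(z_1),\nu(z_2)\rangle$ are distinct (otherwise three of the four points are collinear, against Remark \ref{retta} and the quadratic equations of $X$), hence meet in a single point $q'$; one checks $q'\notin X$ by the same collinearity argument, so $q'$ has rank $2$ and admits the two disjoint decompositions $\{x,y\}$ and $\{z_1,z_2\}$. Proposition \ref{sigma20} then forces $q'$ into $\langle\nu(\mathbb{P}^1\times\mathbb{P}^1)\rangle$ for some multiprojective subspace $\mathbb{P}^1\times\mathbb{P}^1\subseteq Y$, Autarky places $x,y,z_1,z_2$ in that subspace, and therefore $q\in\langle\nu(A)\rangle$ lies in the $\mathbb{P}^3$ spanned by a quadric surface, giving $r_X(q)\le 2$, a contradiction. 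I would replace the divisor casework with this argument.
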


\begin{proof}
Suppose, by contradiction, that $A$ and $B$ have 2 distinct points in common and call the set of these two points $E$. Let $A=E \cup \{u\}$ and $B=E \cup \{v\}$.
Since the rank of $q$ is 3, $ q \notin \langle \nu(E) \rangle$, but since by definition $q\in \langle \nu(A) \rangle \cap \langle \nu(B) \rangle $ we have that
$\langle \nu(E)\rangle \subsetneq \langle \nu(A) \rangle \cap \langle \nu(B) \rangle $. Clearly $\langle \nu(E)\rangle$ is a line, therefore $\dim \langle \nu(A) \rangle \cap \langle \nu(B) \rangle > 1 $, but $\langle \nu(A) \rangle $ and $\langle \nu(B) \rangle  $ are both planes, so $\langle \nu(A) \rangle =\langle \nu(B) \rangle  $. In the plane $\langle \nu(A) \rangle $ we have two different lines: $\nu(E) $ and $ \langle \nu(u),\nu(v) \rangle$, which mutually intersect in at most a point $q'$. Remark that  $q' \notin X$ because otherwise the line $\langle \nu(E)\rangle $ would have at least $3$ points of rank 1 and so we would have $\langle \nu(E)\rangle \subset  X$, contradicting Remark \ref{retta}.  So $r_X(q')=2 $ and $\sharp \mathcal{S}(Y,q')\geq 2 $, by Proposition \ref{2fattorisigma2} we get that actually $q' \in \langle \nu(Y')\rangle $, where $Y'=\mathbb{P}^1 \times \mathbb{P}^1  $. But also $E, \{ u,v\} \subset Y' $, so $q \in \langle \nu(Y')\rangle  $, which contradicts the fact that $q $ has rank $3 $. 
 \end{proof}

An immediate corollary of Lemma \ref{cardinalitatra5e6}  is the following.

\begin{corollary}\label{solo5o6} If $q$ is a rank-3 tensor and $A$ and $B$ are two distinct sets evincing its rank, then the cardinality of $A \cup B$ can only be either 5 or 6.
\end{corollary}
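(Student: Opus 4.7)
The plan is essentially a counting argument that reduces immediately to Lemma \ref{cardinalitatra5e6}. Since $A$ and $B$ both evince the rank of $q$ and $r_X(q)=3$, we have $\sharp A = \sharp B = 3$. By inclusion-exclusion,
\[
\sharp(A \cup B) = \sharp A + \sharp B - \sharp(A \cap B) = 6 - \sharp(A \cap B),
\]
so it suffices to bound $\sharp(A \cap B)$ from above and below.

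First I would rule out $\sharp(A \cap B) = 3$: this would force $A = B$, contradicting the hypothesis that $A$ and $B$ are distinct. Hence $\sharp(A \cap B) \leq 2$, giving $\sharp(A \cup B) \geq 4$. Next I would invoke Lemma \ref{cardinalitatra5e6}, which gives directly $\sharp(A \cap B) \leq 1$. Combining this with $\sharp(A \cap B) \geq 0$ yields $\sharp(A \cup B) \in \{5,6\}$, as claimed.

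There is essentially no obstacle here: the entire content has already been absorbed into the statement of Lemma \ref{cardinalitatra5e6}, and the corollary is just the arithmetic reformulation $\sharp(A \cup B) = 6 - \sharp(A \cap B)$ combined with the two extreme cases being excluded.
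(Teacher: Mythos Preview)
Your proposal is correct and matches the paper's approach: the paper states the corollary as an immediate consequence of Lemma \ref{cardinalitatra5e6} without further proof, and your inclusion-exclusion argument $\sharp(A\cup B)=6-\sharp(A\cap B)$ together with $\sharp(A\cap B)\le 1$ from the lemma is exactly the intended derivation. The intermediate step ruling out $\sharp(A\cap B)=3$ is redundant once you invoke the lemma, but harmless.
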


This corollary turns out to be extremely useful for the proof of our main result, Theorem \ref{main_theorem}. We will be allowed to focus only on the structure of not-identifiable points of rank-3 with at least two decompositions $A$ and $B$ as in Corollary \ref{solo5o6}. This is the reason why we will study separately the case $\sharp A\cup B =5$ in Section \ref{section:5points} form the case $\sharp A\cup B=6$ in Section \ref{section:6points}.

\medskip

Another very useful behaviour that needs to be understood in order to study the identifiability of rank-3 tensors, is the structure of the not-independent sets of at most 3 rank-1 tensors. This is what is described by the following lemma.

\begin{lemma}\label{3points}
A set of points $E\subset Y\simeq \mathbb{P}^{n_1}\times \cdots \times \mathbb{P}^{n_k}$ of cardinality at most 3
does not impose independent conditions to multilinear forms over $Y_i\simeq \mathbb{P}^{n_1}\times \cdots \times \hat{\mathbb{P}^{n_i}} \times \cdots \times \mathbb{P}^{n_k}$, $i=1, \ldots , k$, (i.e.  $h^1(\mathcal{I}_E(\hat{\varepsilon}_i)) >0$)
 if and only if one of the following cases occurs:
\begin{enumerate}
\item\label{uno} $\sharp (E) =3$ and there is $j\in \{1,\dots ,k\}\setminus \{i\}$ such that $\sharp (\pi _h(E)) =1$ for all $h\notin \{i,j\}$; 
\item\label{due} there are $u, v \in E$ such that $u\ne v$ and $\eta _i(u)=\eta _i(v)$.
\end{enumerate}
\end{lemma}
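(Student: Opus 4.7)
I would reinterpret the cohomological condition $h^1(\mathcal{I}_E(\hat\varepsilon_i))>0$ geometrically. Pullback along $\eta_i$ identifies $H^0(Y,\mathcal{O}_Y(\hat\varepsilon_i))$ with $H^0(Y_i,\mathcal{O}_{Y_i}(1,\ldots,1))$, and via the Segre embedding $\nu_i$ these sections correspond to hyperplanes in $\mathbb{P}^{N'}:=\langle\nu_i(Y_i)\rangle$. Thus $E$ fails to impose independent conditions on sections of $\mathcal{O}_Y(\hat\varepsilon_i)$ if and only if the points $\nu_i(\eta_i(E))\subset\mathbb{P}^{N'}$ are linearly dependent, either because two of them coincide or because three are distinct and collinear.

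Sufficiency is then immediate. In case (2), if $u\ne v$ in $E$ satisfy $\eta_i(u)=\eta_i(v)$ then $\nu_i(\eta_i(u))=\nu_i(\eta_i(v))$, so $\nu_i(\eta_i(E))$ has strictly fewer than $\sharp E$ distinct points and cannot be linearly independent. In case (1) with $\sharp E=3$, the images $\eta_i(E)$ agree in every factor of $Y_i$ other than the $j$-th, so they all lie inside the linearly embedded $\mathbb{P}^{n_j}\subset\nu_i(Y_i)$ obtained as the fiber of $Y_i\to\prod_{h\ne i,j}\mathbb{P}^{n_h}$ over their common projection; three points on such an embedded $\mathbb{P}^{n_j}$ are collinear in $\mathbb{P}^{N'}$ as soon as they are collinear in $\mathbb{P}^{n_j}$ (which is automatic when $n_j=1$).

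For necessity, assume $h^1(\mathcal{I}_E(\hat\varepsilon_i))>0$ and that case (2) does not hold, so $\eta_i$ is injective on $E$. Then $\sharp E$ distinct points of $\nu_i(Y_i)$ must be linearly dependent; this rules out $\sharp E\le 2$, hence $\sharp E=3$ and the three points lie on some line $L\subset\mathbb{P}^{N'}$. Since the Segre variety $\nu_i(Y_i)$ is cut out by quadrics, any line meeting it in three points must be contained in it; by the standard classification of lines on a Segre product, such a line is the $\nu_i$-image of $\{\ast\}\times\cdots\times L_j\times\cdots\times\{\ast\}$ for some $j\ne i$ and some line $L_j\subset\mathbb{P}^{n_j}$. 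This forces the three points of $E$ to share coordinates in every factor $h\notin\{i,j\}$, which is exactly case (1). The main obstacle is the classification of lines inside a Segre variety, a standard but non-trivial fact (e.g.\ via Concision/Autarky applied to pure tensors parametrising $L$), which combined with the quadric generation of the ideal of the Segre pins down the structure of three collinear Segre points.
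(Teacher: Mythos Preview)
Your proposal is correct and follows essentially the same approach as the paper: both identify $H^0(\mathcal{O}_Y(\hat\varepsilon_i))$ with $H^0(\mathcal{O}_{Y_i}(1,\ldots,1))$ via pullback along $\eta_i$, split on whether $\eta_{i|E}$ is injective (giving case~(2) when it is not), and in the injective case conclude that the three image points are collinear under $\nu_i$ and hence lie on a line of a Segre variety, forcing case~(1). Your argument is somewhat more explicit than the paper's (you invoke quadric generation and the classification of lines on a Segre product, where the paper simply appeals to ``the structure of the Segre variety''), and your parenthetical remark that collinearity in $\mathbb{P}^{n_j}$ is automatic when $n_j=1$ correctly flags a subtlety in the sufficiency of case~(1) that the paper passes over as obvious.
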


\begin{proof} The fact that both items \ref{uno}. and \ref{due}. imply that  $h^1(\mathcal{I}_E(\hat{\varepsilon}_i)) >0$ is obvious. Let us describe the other implication.

By definition $H^0(\mathcal{O}_Y(\hat{\varepsilon}_i)) \cong H^0(\mathcal{O}_{Y_i}(1,\dots ,1)$, and  $\mathcal{O} _Y(\hat{\varepsilon}_i)$ is not a very ample line bundle. So we cannot be sure about the injectivity of the restriction $\eta_{i|E}$ of $\eta_i$ to the finite set $E$.\\
If $\eta_{i|E}$ is not injective one immediately gets that $h^1(\mathcal{I}_E(\hat{\varepsilon}_i) )>0$. Moreover if $\eta_{i|E}$ is not injective it means that there are 2 distinct points of $E$, say $u$ and $v$ which are mapped by $\eta_i$ onto the same point, i.e. we are in item \ref{due}. of this lemma.

Now assume that $\eta _{i|E}$ is injective (i.e. we are not in item \ref{due}.). This implies that $\sharp E = \sharp \eta_i(E)$. We have by hypothesis that $h^1(\mathcal{I}_E(\hat{\varepsilon}_i))>0$. Since 
by definition $h^1(\mathcal{I}_E(\hat{\varepsilon}_i))
= h^1(Y_i, \mathcal{I}_{\eta_i(E)}(1,\dots ,1))$ we have that $\eta_i(E)$ does not impose independent conditions to the multilinear forms over $Y_i$, therefore $\sharp (\eta_i(E))\geq 3$ which clearly implies that $\sharp (\eta_i(E))=3$ since by hypothesis the cardinality of $E$ is at most 3.
Now $\eta_i(E)$ is a set of 3 distinct points on $Y_i$  which does not impose independent conditions to the multilinear forms over $Y_i$, and
$\mathcal{O}_{Y_i}(1,\dots ,1)$ is very ample, 
therefore the $3$ points of $\eta_i(E)$  must be mapped to collinear points by the Segre embedding $\nu _i$ of $Y_i$. Hence, by the structure of the Segre variety $\nu_i(Y_i)$, we get that $\langle \nu _i(\eta_i(E))\rangle \subseteq \nu _i(Y_i)$ and  there is $j\in \{1,\dots ,k\}\setminus \{i\}$ such that $\sharp (\pi _h(\eta_i(E))) =1$ for all $h\notin \{i,j\}$.
Since $h\ne i$, we have $\pi _h(\eta_i(E)) =\pi _h(E)$.
\end{proof}

\section{Two different solutions with one common point}\label{section:5points}

We have seen in Corollary \ref{solo5o6} that if a rank-3 tensor $q$ is not-identifiable and $A$, $B$ are two sets of points on the Segre variety computing its rank, then $\sharp A\cup B$ can only be either 5 or 6. This section is fully devoted to the case in which $\sharp A\cup B=5$, i.e. $A$ and $B$ share only one point  and call it $p$:
\begin{equation}\label{A'B'}S:= A\cup B, \; \;
    \sharp S=5, \; \;  A\cap B =\{p\} \hbox{ and }  A'=A \setminus \{ p \}, \; \;B'=B\setminus \{ p \}.
\end{equation}

The matrix case is well known, therefore we will always assume that $q$ is an order-$k\geq 3$ tensor, i.e. $q\in \langle \nu(Y) \rangle$ with $Y=\prod_{i=1}^k\mathbb{P}^{n_i}$ and $k\geq 3$.

We will study separately the cases in which:
\begin{itemize}
    \item $Y$ contains at least one factor of projective dimension 2 and all the others of dimension either 1 or 2 (Proposition \ref{prop:P2P1i});
\item   $Y$
is a product of $\mathbb{P}^1$'s only (see Proposition \ref{prop_P1}).

\end{itemize}
This will completely cover the cases of not-identifiable rank-3 tensors with  the condition \eqref{A'B'} since, by Remark \ref{remarkconcision}, the concise Segre of a rank-3 point $q$  is $X_q= \nu(\mathbb{P}^{n_1} \times \cdots \times  \mathbb{P}^{n_k} )$, with $n_1, \dots ,n_k \in \{1,2 \}$.

\begin{proposition}\label{prop:P2P1i}
Let $Y$
be the multiprojective space with at least 3 factors and at least one them of projective dimension 2, i.e. $Y=\mathbb{P}^2\times \mathbb{P}^{n_2}\times \cdots \times \mathbb{P}^{n_k}$ with $n_i\in \{1,2\}$ for $i=1, \ldots , k$ and $k\geq 3$. Let $q\in \sigma_3^0(\nu(Y))$, with $\nu(Y)$ the concise Segre of $q$. If there exist two sets $A,B\in \mathcal{S}( Y,q)$ evincing the rank of $q$ such that $\sharp A\cap B=1$ then $q$ is as in  Proposition \ref{x1.1}.%Example \ref{caso5}. 
\end{proposition}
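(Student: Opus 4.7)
The plan is to use the unique (up to scalar) linear relation among $\nu(S)$ with $S=A\cup B$, and to split according to whether $\nu(p)$ appears in it. Autarky (Remark \ref{sulP2aebindip}) gives that $\pi_1(A),\pi_1(B)$ each span $\mathbb{P}^2$, while Lemma \ref{dimS2ranknot3} together with the fact that $\langle \nu(A)\rangle$ and $\langle \nu(B)\rangle$ are two distinct planes sharing the line $\langle \nu(p),q\rangle$ forces $\dim\langle\nu(S)\rangle = 3$. Thus $\nu(S)\subset\mathbb{P}^3$ admits a unique linear dependence, which by subtracting the two rank-$3$ expressions for $q$ reads $(\lambda_0-\mu_0)\nu(p)+\lambda_1\nu(a_1)+\lambda_2\nu(a_2)-\mu_1\nu(b_1)-\mu_2\nu(b_2)=0$ with all $\lambda_i,\mu_j\ne 0$.

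In the ``nice'' Case A, $\lambda_0=\mu_0$, so $q':=\lambda_1\nu(a_1)+\lambda_2\nu(a_2)=\mu_1\nu(b_1)+\mu_2\nu(b_2)$ is a rank-$2$ tensor with two distinct decompositions $A',B'$. Proposition \ref{sigma20} forces the concise Segre of $q'$ to be $\nu(\mathbb{P}^1\times\mathbb{P}^1)$, hence the minimal multiprojective subspace $Y'\subseteq Y$ containing $A'\cup B'$ has the form $\mathbb{P}^1\times\mathbb{P}^1\times\{u_3\}\times\cdots\times\{u_k\}$ (after relabeling). Conciseness of $\nu(Y)$ for $q$ prevents $p\in Y'$ (else $S\subset Y'$ would force the concise Segre of $q$ to be proper inside $Y$). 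The decomposition $q=\lambda_0\nu(p)+q'$ is then exactly the construction of Proposition \ref{x1.1}.

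The main work is ruling out Case B ($\lambda_0\ne\mu_0$). Choose a basis $e_1,e_2,e_3$ of $V_1$ identifying $\pi_1(p),\pi_1(a_1),\pi_1(a_2)$, write $\pi_1(b_j)=x_je_1+y_je_2+z_je_3$, and factor $\nu(p)=e_1\otimes P$, $\nu(a_i)=e_{i+1}\otimes A_i$, $\nu(b_j)=\pi_1(b_j)\otimes B_j$ in $V_1\otimes W$ with $W=V_2\otimes\cdots\otimes V_k$. Projecting the linear relation onto the three $V_1$-components, the $e_2$- and $e_3$-equations force $A_1,A_2\in\langle B_1,B_2\rangle$. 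So the four rank-$1$ elements $A_1,A_2,B_1,B_2$ give points of $\nu_1(Y_1)$ lying on a line $L'\subset\mathbb{P}(W)$, and since $\nu_1(Y_1)$ is cut out by quadrics either $L'\subset\nu_1(Y_1)$ or the four points collapse pairwise into $2$ distinct points on $\nu_1(Y_1)$.

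If $L'\subset\nu_1(Y_1)$ then $L'$ corresponds to a $\mathbb{P}^1$ in $Y_1$ along which only one factor $j\in\{2,\ldots,k\}$ varies; the $e_1$-equation $c_0P=-d_1x_1B_1-d_2x_2B_2$ places $P$ on the same line, so the five points of $S$ all agree on every factor $i\in\{2,\ldots,k\}\setminus\{j\}$, hence $q$ lives in a Segre sub-variety with at most two non-trivial factors, contradicting conciseness of $\nu(Y)$ since $k\ge 3$. In the pairing alternative (up to relabeling $A_i=B_i$), the $e_2$- and $e_3$-equations force $y_2=z_1=0$, and the $e_1$-equation writes $P$ as a combination of $A_1,A_2$: if $x_1=x_2=0$ then $P=0$, impossible; if exactly one $x_i$ vanishes then $\pi_1(b_i)=\pi_1(a_i)$ which, combined with $\eta_1(a_i)=\eta_1(b_i)$, gives $b_i=a_i$, contradicting $A'\cap B'=\emptyset$; if $x_1,x_2\ne 0$ then $P,A_1,A_2$ are three distinct rank-$1$ points of $\langle A_1,A_2\rangle$, so this line lies in $\nu_1(Y_1)$ and we reduce to the first alternative (and the pairing $A_1=B_2,A_2=B_1$ is handled symmetrically). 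The hardest technical point is this coordinate analysis in Case B and, in particular, checking that every degenerate pairing is reducible either to the Segre-line case (killed by the hypothesis $k\ge 3$) or to a forbidden coincidence of points in $A'\cup B'$.
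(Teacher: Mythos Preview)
Your proof is correct and takes a genuinely different route from the paper's.

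The paper argues via the residuation machinery: it picks a divisor $M\in|\mathcal{O}_Y(\varepsilon_1)|$ through $A'$, and Lemma~\ref{lemmariformulato} forces either $h^1(\mathcal{I}_{S\setminus S\cap M}(\hat\varepsilon_1))>0$ or $A'\cup B'\subset M$ with $p\notin M$. The first branch is killed using Lemma~\ref{3points} together with Autarky and Remark~\ref{retta}; in the second branch one finds the unique point $o\in\langle\nu(Y'')\rangle$ on the line $\langle\nu(p),q\rangle$, and this $o$ is a rank-$2$ tensor with decompositions $A'$ and $B'$, placing $q$ in the situation of Proposition~\ref{x1.1}.

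Your argument bypasses divisors entirely. The dimension count $\dim\langle\nu(S)\rangle=3$ pins down a single linear relation among the five points, and you case on whether $\nu(p)$ appears in it. Your Case~A is the paper's second branch in disguise (your $q'$ is the paper's $o$), reached by pure linear algebra and Proposition~\ref{sigma20}. Your Case~B replaces the cohomological obstruction $h^1>0$ by an explicit coordinate computation in $V_1\otimes W$: projecting the relation onto the three $V_1$-slices and using that Segre varieties are cut out by quadrics reproduces, by hand, the collapse that the paper extracts from Lemma~\ref{3points}. The paper's approach is shorter and consistent with the cohomological toolkit used throughout Sections~\ref{section:5points}--\ref{section:6points}; yours is more elementary and self-contained, trading the residuation lemma for a concrete tensor-factorization argument that makes transparent why the $\mathbb{P}^2$-factor matters (it supplies the three independent slices $e_1,e_2,e_3$ against which the relation is tested).
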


\begin{proof}
Consider a divisor $M\in \vert \mathcal{O}_Y(\varepsilon_1) \vert $ containing $A'=A\setminus \{ p\} $. By Concision/Autharky $S \nsubseteq M $, so, by Lemma \ref{lemmariformulato}, either $h^1(\mathcal{I}_{S \setminus S \cap M}(\hat{\varepsilon}_1))>0 $ or $p \notin M $ and $A'\cup B'\subset M $. We study separately the two cases.

\begin{enumerate}[leftmargin=
+.2in]%[label=(\textnormal{b.\arabic*})]
\item \label{b.1} 
First assume $h^1(\mathcal{I}_{S \setminus S \cap M}(\hat{\varepsilon}_1))>0 $.

The divisor $M$ contains $A'$ by definition so 
 $\sharp (S \setminus S\cap M) \leq 3$, moreover, if we define $Y_1:=\mathbb{P}^{n_2}\times \cdots \times \mathbb{P}^{n_k}$ with $n_i=1,2$ for $i=2, \ldots, k$, we have that $\mathcal{O}_{Y_1}(1,\ldots , 1) $ is very ample, 
therefore we can apply Lemma \ref{3points} and say that one of the following occurs:

\begin{enumerate}[label=\textnormal{(\roman*)}]
\item \label{bcaso1} $\sharp (S\setminus S\cap M)=3 $ and there exists a projection $\pi_i $, with $ i\in \{2, \ldots ,k \}$ such that $\sharp (\pi_i(S\setminus S\cap M))=1 $;
\item \label{bcaso2} There exist $u,v \in (S \setminus S\cap M) $ such that $ u\neq v$ and $\eta_1(u)=\eta_1(v). $
\end{enumerate}
We remark that case \ref{bcaso2} implies that $\pi_i(u)=\pi_i(v) $ for all $i >1 $.
Since $M  $ contains $A' $, we have that $S \setminus S \cap M =\{u,v\} \subseteq B $, we can exclude case \ref{bcaso2} thanks to Remark \ref{retta}.

So only case \ref{bcaso1} is possible. Since $\sharp (S\setminus S\cap M)=3 $ we have that  $S \setminus S \cap M=B $ 
and there exists an index $i \in \{ 2,\ldots , k \} $ such that $\sharp \pi_i(S\setminus S\cap M)=1 $. The fact that there is an $i\in \{2,\ldots , k\}$ such that $\sharp (\pi _i(B)) =1$,
means that $B$ only depends by $ k-1$ factors, contradicting Autarky.

\item \label{b.2} Now assume $A'\cup B'\subset M$.

Let $Y''$ be the minimal multiprojective space contained in $M$ and containing $A'\cup B'$. Since $q\in \langle \langle \nu (Y'')\rangle\cup \{p\})\rangle$ and $p\notin Y''$, there is a unique $o\in \langle
\nu (Y'')\rangle$ such that
$q\in \langle \{\nu ({p}),o\}\rangle$. Since $\langle \nu (A)\rangle$ (resp. $\langle \nu (B)\rangle$) is a plane containing
$\nu ({p})$ and $q$, there is a unique $o_1\in \langle \nu (A')\rangle$ (resp. $o_2\in \langle \nu (B')\rangle$) such that
$q\in \langle \{\nu ({p}),o_1\}\rangle$ (resp. $q\in \langle \{\nu ({p}),o_2\}\rangle$). The uniqueness of $o$ gives
$o=o_1=o_2$. Since $o_1=o_2$, we get a tensor of rank $2$ with $A'$ and $B'$ as solutions. Thus $ q$ is as described in Proposition \ref{x1.1}. 
\end{enumerate}
\vspace{-0.7cm}
\end{proof}

\begin{proposition}\label{prop_P1}
Let $Y=(\mathbb{P}^1)^{\times k}$ with $k\geq 3$ and let $q\in \sigma_3^0(\nu (Y))$ be such that there exist two different sets $A,B\in \mathcal{S}(Y,q)$ with the property $\sharp(A\cup B)=5$,  where $\nu(Y)$ is the concise Segre of $q$. Then $k$ can only be either 3 or 4.  If $k=3$ then $q$ belongs to a tangent space of $\nu((\mathbb{P}^1)^{\times 3})$ and $\dim (\mathcal{S}(Y,q))\geq 2$. If $k=4$ then $\dim (\mathcal{S}(Y,q))\geq 1$.
\end{proposition}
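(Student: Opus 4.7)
My plan splits the proof into three cases according to the value of $k$.

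When $k=4$, the claim $\dim\mathcal{S}(Y,q)\geq 1$ follows at once from Remark~\ref{allP14}: since $\sigma_3(\nu((\mathbb{P}^1)^{\times 4}))$ is strictly defective, the fiber-dimension argument sketched there (citing \cite[Cap II, Ex 3.22, part (b)]{Hart}) gives an infinite family of rank-$3$ decompositions for every $q\in\sigma_3^0$.

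For $k=3$, the crucial observation is that for any rank-$3$ decomposition of $q$ through the shared point $p$, the projective point $c:=q-\nu(p)$ is independent of the choice of decomposition and equals $\nu(a_1)+\nu(a_2)=\nu(b_1)+\nu(b_2)$ as points of $\mathbb{P}^N$. Thus $c$ has rank exactly $2$ with two distinct rank-$2$ decompositions $\{a_1,a_2\}$ and $\{b_1,b_2\}$, and Proposition~\ref{sigma20} gives that the concise Segre of $c$ is $\nu(\mathbb{P}^1\times\mathbb{P}^1)$. Hence the four points $a_1, a_2, b_1, b_2$ lie in a common $2$-factor sub-product $Z\subset Y$, and Corollary~\ref{spaziosolS2} produces a $2$-parameter family of rank-$2$ decompositions of $c$ in $Z$; each yields a distinct rank-$3$ decomposition $\{p, r_1, r_2\}$ of $q$, so $\dim\mathcal{S}(Y, q)\geq 2$. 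That $q$ belongs to a tangent space of $\nu((\mathbb{P}^1)^{\times 3})$ is automatic, since in $(\mathbb{P}^1)^{\times 3}$ we have $\sigma_2=\mathbb{P}^7$ and the rank-$3$ locus is precisely $\tau\setminus\sigma_2^0$.

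For $k\geq 5$, I aim to derive a contradiction via Lemma~\ref{lemmariformulato}. Repeating the $c=q-\nu(p)$ argument, the four points $a_1, a_2, b_1, b_2$ lie in a $2$-factor sub-product $Z\subset Y$, hence agree in a set of $k-2\geq 3$ coordinates. Concision of $q$ forces $\pi_i(p)$ to differ from the common value $u_i=\pi_i(a_j)=\pi_i(b_j)$ for each such coordinate $i$. Pick any such $i$ and consider the divisor $D:=\pi_i^{-1}(\pi_i(p))\in|\mathcal{O}_Y(\varepsilon_i)|$; then $p\in D\supseteq A\cap B$, and the residual $S\setminus S\cap D=\{a_1, a_2, b_1, b_2\}$ consists of four points sharing the same $\pi_i$-image. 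I intend to prove $h^1(\mathcal{I}_{S\setminus S\cap D}(\hat\varepsilon_i))=0$ by applying Lemma~\ref{3points} in $Y_i=(\mathbb{P}^1)^{\times(k-1)}$: injectivity of $\eta_i$ on these four points follows from Remark~\ref{retta}, and the remaining degenerate configuration of that lemma (three projected points lying on a line of $X_i$) is ruled out by the fact that $\{a_1,a_2\}$ and $\{b_1,b_2\}$ are concise rank-$2$ decompositions inside $Z$, so they differ in both factors of $Z$. Lemma~\ref{lemmariformulato} then forces $S\subset D$, contradicting $\pi_i(a_j)\neq \pi_i(p)$.

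The main obstacle I expect is the detailed cohomological verification in the $k\geq 5$ case; carrying it through will require a careful case-analysis of the possible configurations of the four residual points in $(\mathbb{P}^1)^{\times(k-1)}$ permitted by Lemma~\ref{3points}, ensuring that concision of the rank-$2$ decompositions of $c$ in $Z$ is strong enough to preclude every such degeneracy.
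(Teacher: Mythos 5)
Your $k=4$ case coincides with the paper's (both reduce to Remark \ref{allP14}). The rest of your argument, however, rests on a step you have not justified and which is in fact the crux of the whole matter: the claim that the point $c$ with $q\in\langle \nu(p),c\rangle$ is independent of the decomposition. From $A=\{p,a_1,a_2\}$ one gets a unique point $o_1=\langle \nu(p),q\rangle\cap\langle \nu(a_1),\nu(a_2)\rangle$, and from $B$ a unique $o_2\in\langle \nu(b_1),\nu(b_2)\rangle$; both lie on the line $\langle \nu(p),q\rangle$, but nothing forces $o_1=o_2$. The paper obtains such a coincidence (in case (2) of Proposition \ref{prop:P2P1i}) only \emph{after} a divisor/cohomology argument showing that $A'\cup B'$ sits inside a common proper multiprojective subspace $Y''$ with $\nu(p)\notin\langle\nu(Y'')\rangle$, so that the line $\langle\nu(p),q\rangle$ meets $\langle\nu(Y'')\rangle$ in a single point. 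Without that containment your reduction to a rank-$2$ point with two decompositions, and hence the appeal to Proposition \ref{sigma20} and Corollary \ref{spaziosolS2}, is unsupported. This affects your $k=3$ dimension count (which the paper instead settles by citing the known description of the tangential variety of $\nu((\mathbb{P}^1)^{\times 3})$) and, fatally, your $k\geq 5$ argument.

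Moreover, even granting that $c$ is well defined, the $k\geq 5$ contradiction cannot be completed as you propose. If the lines $\langle\nu(a_1),\nu(a_2)\rangle$ and $\langle\nu(b_1),\nu(b_2)\rangle$ meet at $c$, the four residual points span only a plane; since they share the $i$-th coordinate, their images in $Y_i$ span that same plane, so $h^1\big(\mathcal{I}_{S\setminus S\cap D}(\hat{\varepsilon}_i)\big)=1>0$ and the hypothesis of Lemma \ref{lemmariformulato} fails (note also that Lemma \ref{3points} is stated only for sets of cardinality at most $3$, so it cannot be applied to the four residual points as you suggest). No contradiction follows — and indeed none should be expected from this configuration, since it is exactly the one produced by Proposition \ref{x1.1} with all factors equal to $\mathbb{P}^1$, which is a genuine non-identifiability phenomenon recorded in item \ref{5.} of Theorem \ref{main_theorem}. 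The paper's own treatment of $k>4$ is entirely different: it recursively chooses divisors $M_2,M_3,M_4$ of multidegrees $\varepsilon_i$ meeting $S$, and analyses through Lemma \ref{lemmariformulato} the possible distributions of the five points of $S$ among them.
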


\begin{proof}
If $k=3$ then the only rank-3 tensors in $\langle \nu(\mathbb{P}^1)^{\times 3})\rangle$ are those belonging to the tangential variety of the Segre variety (cf. \cite{Buc,bb3}) for which $\dim (\mathcal{S}(Y,q))\geq 2$ (cf. \cite{AOP,BalBer3,CGGrank, CGGtecnica}).

The case $k=4$ is covered by Remark \ref{allP14}.

Assume $k>4$ and write $Y=\prod_{i=1}^k\mathbb{P}^1_i $. Let $S=A\cup B$ as in 
\eqref{A'B'}.
\\
We build a recursive set of divisors in order to being able to cover the whole set $S$ as follows.
Let $o_i\in \mathbb{P}^1_i$, $i=2,3,4$ be such that:
\begin{enumerate}[leftmargin=
+.7in]
    \item[$1^{\mathrm{st}}$ divisor:] $\pi _4^{-1}(o_4)\cap S \ne \emptyset$ and call $M_4:=\pi_4^{-1}(o_4)$;
    \item[$2^{\mathrm{nd}}$ divisor:] $\pi _3^{-1}(o_3)\cap (S \setminus (S\cap M_4)) \neq \emptyset$ and call $M_3:=\pi_3^{-1}(o_3)$.
        \item[$3^{\mathrm{th}}$ divisor:] If $M_3\cup M_4$ already covers the whole $S$ (i.e. $S\subset M_3\cup M_4 $), set $M_2$ to be any divisor $M_2\in |\mathcal{O} _Y(\hat{\varepsilon}_2)|$.
        \item[$3^{\mathrm{th}}$ divisor:]     Otherwise, if $S\nsubseteq M_3\cup M_4$, choose $o_2\in \mathbb{P}^1_2$  such that $\pi_2^{-1}(o_2)\cap(S \setminus S\cap (M_3\cup M_4))\neq \emptyset$ and set 
    $M_2:=\pi_2^{-1}(o_2)$.

\end{enumerate}

Now it may happen that either $S\subset M_2\cup M_3 \cup M_4$ or not. We study those two cases in \ref{contained} and \ref{notcontained} respectively.

\begin{enumerate}[leftmargin=+.2in, label=(\alph*)]
    \item \label{contained}

Here we assume that  $S\subset M_2\cup M_3 \cup M_4$. Since $\sharp (S)=5$ there is at least one of the $M_i$'s containing at
least two points of $S$, and there are two of the $M_i$'s whose union contains at least 4 points of $S$: wlog we may assume that $\sharp (S \cap (M_3\cup M_4))\geq 4$.
\begin{itemize}[leftmargin=+.1in]
\item
%\quad (c3.1.1) 
Assume $\sharp (S\cap (M_3\cup
M_4))= 4$. Since $\mathcal{O} _Y(1,1,0,0,\dots )$ is globally generated, we have that $h^1\big(\mathcal{I} _{S\setminus S\cap (M_3\cup
M_4)}(1,1,0,0,1,1,\dots )\big) =0$,
contradicting Lemma \ref{lemmariformulato}.

\item 
Assume $S\subset M_3\cup M_4$. Therefore there is one of the $M_i$'s containing at least 3 points of $S,$ let $\sharp (M_4\cap S) \ge 3$.
Since $S\nsubseteq M_4$, we get $h^1\big(\mathcal{I} _{S\setminus S\cap M_4}(\hat{\varepsilon}_4)\big) >0$ (by Lemma \ref{lemmariformulato}), hence $\sharp (S\setminus S\cap M_4) =2$ and 
\begin{equation}\label{uv}
S\setminus S\cap M_4=\{u,v\}
%say
%$S\setminus
%\{u,v\}$ 
\hbox{ with }
\pi _i(u)=\pi _i(v), \; \forall
i\ne 4.\end{equation}
Since $h^1\big(\mathcal{I} _{S\setminus S\cap M_3}(\hat{\varepsilon}_3)\big) >0$ (again by Lemma \ref{lemmariformulato}, we get that either there are $w, z\in S\setminus S\cap
M_3$
such that $w\ne z$, $\pi _i(w)=\pi _i(z)$ for all $i\ne 3$ or $\nu _4(\eta _4(S\cap M_4))$ (remind Notation \ref{pi}) is made by $3$ collinear points, say
with a line corresponding to the $i$-th factor. The latter case cannot arise because $S$  does not depend only on the third,
fourth and $i$-th factor of $Y$. Thus there exist
\begin{equation}\label{wz}
w, z\in S\setminus S\cap
M_3 \hbox{ such that } w\ne z, \, \pi _i(w)=\pi _i(z) \; \forall i\ne 3.
\end{equation} 
In \eqref{uv} and \eqref{wz} we have $4$ distinct points $u, v, w, z$ such that $\sharp (\pi _5(\{u,v,w,z\})) =1$. Take $M_5\in |\mathcal{O}
_Y(\varepsilon _5)|$ containing
$\{u,v,w,z\}$. Since $h^1\big(\mathcal{I} _{S\setminus S\cap M_5}(\hat{\varepsilon}_5)\big) =0$, Autarky and Lemma \ref{lemmariformulato} give a
contradiction.
\end{itemize}

\item \label{notcontained}
Assume $S\nsubseteq M_2\cup M_3\cup M_4$. By Lemma \ref{lemmariformulato} we get $h^1(\mathcal{I} _{S\setminus S\cap (M_2\cup M_3\cup M_4)}(1,0,0,0,1,1,\dots ))>0$. Thus $\sharp (S\setminus (M_2\cup M_3\cup M_4)) =2$, say $S\setminus (M_2\cup M_3\cup M_4) =\{u,v\}$
and $\pi _i(u) =\pi _i(v)$ for all $i\neq2,3,4$. But in this case it is sufficient to change the orginal choice of $o_4$ and take as $o_4$ the point $\pi _4(u)$ and the the new divisor $M_4$ will contain 2 points of  $S$, i.e. $u,v$ therefore we are able to
 get new divisors
$M_2$, $M_3$ with the same contruction as above leading to the case $S\subset M_2\cup M_3\cup M_4$ excluded in step \ref{contained}. 
\end{enumerate}
\vspace{-0.7cm}
\end{proof}

\section{Two disjoint  solutions}\label{section:6points}

We have seen in Corollary \ref{solo5o6} that if a rank-3 tensor $q$ is not-identifiable and $A$, $B$ are two sets of points on the Segre variety computing its rank, then $\sharp A\cup B$ can only be either 5 or 6. This section is fully devoted to the case in which $\sharp A\cup B=6$, i.e. $A$ and $B$ are disjoint:
\begin{equation}\label{disjoint}S:= A\cup B, \; \;
    \sharp S=6, \; \; A:=\{a_1,a_2,a_3\}, B:=\{b_1,b_2,b_3\} \; \;  A\cap B =\emptyset.
\end{equation}

First of all let us show that if $q$ is a rank-3 tensor whose concise Segre $\nu(Y)$ has at least two factors of projective dimension $2$, it  never happens that in $\mathcal{S}(Y,q)$ there are two disjoint sets.

    \begin{remark}\label{remark2pti}
    Let $Y=(\mathbb{P}^{2})^{\times k_1}\times (\mathbb{P}^{1})^{\times k_2}$ and $S\subset Y$ a set of 6 distinct points. Consider $I\subseteq \{k_1+1, \ldots , k_1+k_2\}$  and $\varepsilon:=\sum_{i\in I}\varepsilon_i$. %Take a 
    Suppose there exists a divisor $ M\in |\mathcal{O}_Y(\varepsilon)|$ intersecting $S$ in 4 points. Call $\{u,v\}:=S\setminus (S\cap M)$. In this setting one can apply Lemma \ref{lemmariformulato} and get that $h^1\big(\mathcal{I}_{\{u,v\}}(\widehat{\varepsilon}
    %+\sum_{j=1}^{k_1}\varepsilon_{j}}
    )\big)>0 $ (where $\widehat{\varepsilon}$ is a $(k_1+k_2)$-uple with $0$'s in position of the indices appearing in $\varepsilon$ of $I$ and 1's everywhere else) and $\pi_h(u)=\pi_h(v)$ for any   $h\in \{1,\ldots , k_1+k_2\}\setminus I$.
\end{remark}

\begin{proposition}\label{6pP2P2Pn}
Let $Y $ be a multiprojective space with at least three factors and at least two of them of projective dimension $2$, i.e. $ Y=\mathbb{P}^2\times \mathbb{P}^2\times \mathbb{P}^{n_3}\times \cdots \times \mathbb{P}^{n_k}$ with $n_i\in \{ 1,2\} $ for  $i=1,\dots,k$ and $k\geq 3 $. Let $q\in \sigma_3^0(\nu(Y))$, with $\nu(Y)$ the concise Segre of $q$. If $A,B\in \mathcal{S}( Y,q)$ evince the rank of $q$, then $A$ and $B$ cannot be disjoint.
\end{proposition}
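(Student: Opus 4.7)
The plan is to suppose for contradiction that $A,B\in\mathcal{S}(Y,q)$ are disjoint with $S:=A\cup B$ of cardinality $6$, and derive a contradiction via a carefully chosen divisor together with Lemma \ref{lemmariformulato}. As a preliminary step, apply Remark \ref{sulP2aebindip} to each of the two $\mathbb{P}^2$-factors in turn: this gives that $\pi_i|_A$ and $\pi_i|_B$ are injective and $\pi_i(A),\pi_i(B)$ are triples of linearly independent points spanning $\mathbb{P}^2$ for both $i=1,2$. In particular, no divisor in $|\mathcal{O}_Y(\varepsilon_i)|$ with $i\in\{1,2\}$ can contain $A$ or $B$ entirely, and analogous linear-independence constraints hold for the remaining factors by Autarky.

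The main construction is as follows: let $L\subset \mathbb{P}^2$ be the line through $\pi_1(b_1)$ and $\pi_1(b_2)$ and set $M:=\pi_1^{-1}(L)\in|\mathcal{O}_Y(\varepsilon_1)|$. By linear independence of $\pi_1(B)$ we have $b_3\notin M$, and by linear independence of $\pi_1(A)$ at most two points of $A$ lie in $M$. In each subcase according to $\sharp(A\cap M)\in\{0,1,2\}$, the goal is to verify the vanishing $h^1(\mathcal{I}_{\mathrm{Res}_M(S)}(\widehat{\varepsilon_1}))=0$; Lemma \ref{lemmariformulato} would then force $S\subset M$, contradicting $b_3\notin M$.

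When $\sharp\mathrm{Res}_M(S)\leq 3$ the vanishing will follow from Lemma \ref{3points}: the coincidence $\eta_1(u)=\eta_1(v)$ with $u\neq v$ is ruled out by Remark \ref{retta} when $u,v$ both lie in $A$ or both in $B$, and for a mixed pair it would force equality of the $\pi_2$-coordinates, contradicting the linear independence of $\pi_2(A)$ or $\pi_2(B)$ given by Remark \ref{sulP2aebindip}; the alternative condition ``$\sharp\pi_h(\mathrm{Res}_M(S))=1$ for all $h\notin\{1,j\}$'' is excluded by Autarky on the second $\mathbb{P}^2$-factor. The delicate subcase is $\sharp\mathrm{Res}_M(S)=4$ (which arises when $L\cap\pi_1(A)=\emptyset$), where Lemma \ref{3points} no longer applies; here I would introduce an auxiliary divisor $M'\in|\mathcal{O}_Y(\varepsilon_2)|$ pulled back from a line through two points of $\pi_2(A)$, and apply Lemma \ref{lemmariformulato} to the reducible divisor $M\cup M'\in|\mathcal{O}_Y(\varepsilon_1+\varepsilon_2)|$. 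Its residual is then contained in $\{a_3,b_3\}$ together with any points of $S$ that escape both $M$ and $M'$, which is small enough for a direct verification of $h^1(\mathcal{I}_{\mathrm{Res}_{M\cup M'}(S)}(\widehat{\varepsilon_1+\varepsilon_2}))=0$ using Remark \ref{sulP2aebindip} on the remaining factors.

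For $k\geq 4$ the statement reduces to $k=3$ by the linear projection technique of Step \ref{esc1induzione} of Proposition \ref{x1.1}: pick a generic $o\in\mathbb{P}^{n_k}$ with $o\notin\pi_k(S)$, set $\Lambda:=\langle\nu(\pi_k^{-1}(o))\rangle$, and project linearly from $\Lambda$; the image of $q$ lies in a Segre of one fewer factor still containing the two $\mathbb{P}^2$-factors, and any two disjoint decompositions of $q$ project to disjoint decompositions of its image, contradicting the inductive hypothesis. The main obstacle I expect is the size-$4$ residual subcase in the base case $k=3$, $n_3=1$ (i.e.\ $Y=\mathbb{P}^2\times\mathbb{P}^2\times\mathbb{P}^1$), where the cohomology vanishing on a $4$-point subset of $Y_1\cong \mathbb{P}^2\times\mathbb{P}^1$ embedded by $\mathcal{O}(1,1)$ has to be established without Lemma \ref{3points}; the parallel base case $k=3$, $n_3=2$ is easier because Remark \ref{sulP2aebindip} applies to the third factor too, adding extra linear independence constraints that force the cohomology vanishing directly.
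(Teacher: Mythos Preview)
Your proposal has a genuine gap. The assertion that a mixed coincidence $\eta_1(u)=\eta_1(v)$ with $u\in A$, $v\in B$ ``contradicts the linear independence of $\pi_2(A)$ or $\pi_2(B)$'' is incorrect: Remark~\ref{sulP2aebindip} only says that $\pi_2|_A$ and $\pi_2|_B$ are \emph{each} injective with linearly independent images, and this in no way forbids $\pi_2(a_i)=\pi_2(b_j)$ for some $a_i\in A$, $b_j\in B$. Hence in the size-$2$ residual case you cannot exclude $\eta_1(a_3)=\eta_1(b_3)$, and in the size-$3$ case you cannot exclude $\eta_1(a_i)=\eta_1(b_3)$; Lemma~\ref{3points} then gives you nothing. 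The same problem recurs in your size-$4$ fallback via $M\cup M'$: for the vanishing at multidegree $(0,0,1,\dots,1)$ you would need the images of $a_3$ and $b_3$ under the projection forgetting the first two factors to be distinct, but nothing established so far excludes $\pi_i(a_3)=\pi_i(b_3)$ for all $i\ge 3$.

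The inductive reduction from $k\ge 4$ to $k=3$ is also unjustified as written: projecting from $\Lambda$ sends $q$ to some $\ell(q)$, but you have not shown that $r_{\nu(Y_k)}(\ell(q))=3$, nor that $\eta_k(A),\eta_k(B)$ lie in $\mathcal{S}(Y_k,\ell(q))$, nor that they remain disjoint (again a mixed coincidence $\eta_k(a)=\eta_k(b)$ is not ruled out). The paper avoids both difficulties by arguing uniformly for all $k\ge 3$ with a different mechanism: since $\pi_1|_A$ is injective with linearly independent image, $h^1(\mathcal{I}_A(\varepsilon_1))=0$, so every $W\in|\mathcal{I}_B(\varepsilon_2+\varepsilon_3)|$ must contain all of $S$ by Lemma~\ref{lemmariformulato}. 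Choosing $W=M_2\cup M_3$ with $M_2=\pi_2^{-1}(L_2)$ through two of the $\pi_2(b_j)$ and $M_3=\pi_3^{-1}(J_3)$ through the remaining $\pi_3(b_i)$, Autarky forces some point of $A$ into $M_3$, yielding $\pi_3(a_{\sigma(i)})=\pi_3(b_i)$. Iterating over $i$ and over all factors (swapping the roles of factors $1$ and $2$ when needed) gives $\pi_j(a_i)=\pi_j(b_i)$ for every $i,j$ after a permutation, whence $A=B$, contradicting disjointness.
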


\begin{proof}
The proof is by absurd: assume that there exist $A,B\in \mathcal{S}( Y,q)$ with $A\cap B= \emptyset$. By Remark \ref{sulP2aebindip} we have that
 $\langle \pi _i(A)\rangle =
\langle \pi _i(B)\rangle =\mathbb{P}^2$ for $i=1,2$. 
Fix $W\in |\mathcal{I}_B(\varepsilon _2+\varepsilon _3)|$ (it exists, because $h^0(\mathcal{O}_Y(\varepsilon _2+\varepsilon _3)) = h^0(\mathbb{P}^2\times
\mathbb{P}^{n_3}, \mathcal{O}_{\mathbb{P}^2\times
\mathbb{P}^{n_3}}(1,1)) = 3(n_3+1)>4$). Since $\pi _{1|A}$ is injective, $h^1(\mathcal{I} _A(\varepsilon _1))=0$. Thus $S\subset W$ by Lemma \ref{lemmariformulato}.
In this way we have shown that 
\begin{equation}\label{eqtxt}
    \hbox{any divisor } D \in \vert \mathcal{O}_Y(\varepsilon_2+\varepsilon_3) \vert \hbox{ containing } B \hbox{ contains  also }A . \tag{*}
\end{equation}

\vspace{-1cm}

\begin{quote}

\begin{claim}\label{poker}$\pi_3(a_i)=\pi_3(b_i)$ where $a_i,b_i$ are as in \eqref{disjoint}, for $i=1,2,3$.
\end{claim}

\end{quote}

The proof of this claim can be repeated verbatim for all the other projections with only one caution that we will highlight in the sequel. 
Therefore, by repeating the argument  for all the projections, we will get that $\pi_j(a_i)=\pi_j(b_i)$ for $i=1,2,3$ and for $j=1, \ldots, k$ which is a contradiction with $A$ and $B$ being distinct. This will conclude the proof.

\begin{quote}
\begin{proof}[Proof of the Claim \ref{poker}]
Take a general hyperplane $J_3\subset\mathbb{P}^{n_3}$ containing $\pi _3({b_i})$, (where the $b_i$'s are as in  \eqref{disjoint}, $i=1,2,3$)
by genericity we may assume that if $n_3=2$ then $J_3$ is a line which does not contain any other point of that projection. Set $M_3:= \pi _3^{-1}(J_3)$. Take a line
\begin{equation}\label{L2M2}
L_2\subset \mathbb{P}^2 \hbox{ containing }\{\pi_2(b_j),\pi _2(b_k)\} \hbox{ with } j,k\neq i \hbox{ and set } M_2:= \pi
_2^{-1}(L_2).  \tag{**}
\end{equation}
We have
$B\subset M_2\cup M_3\in |\mathcal{O} _Y(\varepsilon _2+\varepsilon _3)|$.  Thus from \eqref{eqtxt} we get that $M_2\cup M_3$ contains also $ A$. Since $A\nsubseteq M_2$ by
Autarky, there is $a\in A\cap M_3$, i.e. there is $a\in A$ such that 
\begin{equation}\label{pi3a}
    \pi _3(a) =\pi _3({b_i})
\end{equation}
(in fact if $n_3=1$ it is trivial, if $n_3=2$ then we have already remarked that $\pi_3(b_i)$ is the only point of $J_3$ belonging to $\pi_3(Y)$).
Since $\pi_{i\vert  A} $ is injective for $i=1,2 $ (cf. Remark \ref{sulP2aebindip}), %Of course
the points of $A$ projecting on $\pi_3(b_i)$ are different for different $i$'s except if there are $b_i\neq b_j$ such that $\pi_3(b_i)=\pi_3(b_j)$. Suppose that this is the case.
By Lemma \ref{3points} we get that  $\sharp(S\setminus S\cap M_3)=2$.
 Thus if for $i\neq j$
 $\pi_3(b_i)=\pi_3(b_j) $
there are 2 points of $A$ and 2 points of $B$ in $M_3$, i.e. $\sharp(S\cap M_3)= 4$. Suppose that $ S\cap M_3=\{ a_3,b_3,a_2,b_2\}$.
By \cite[Lemmas 2.4  and 2.5]{BalBerChristGes} (also \cite[Lemma 5.1, item (b)]{BalBer2})  $h^1(\mathcal{I}_{S\setminus S\cap M_3}(\hat{\varepsilon}_3))>0 $, i.e. $ \pi_i(a_1)=\pi_i(b_1)$ for all $i\neq 3 $. 
This is a contradiction since we already know that  $\pi_3(a_2)=\pi_3(b_2)$ and we would have $a_2=b_2 $, which contradicts the assumption that  $A\cap B=\emptyset.$ 

Therefore the points $a\in A$ of \eqref{pi3a} are all different for different choices of $i$'s. So we may assume that $\pi_3(a_i)=\pi_3(b_i)$ for $i=1,2,3$ and the $\pi_3(b_i)\neq \pi_3(b_j)$ for $i\neq j$.
\end{proof}
\end{quote}

The argument of the proof of Claim \ref{poker} can be repeated verbatim for all the others $\pi_j$'s with the only caution that when we do the case $j=2$ we have to use a line $L_1\subset \mathbb{P}^2$ containing $\{\pi_1(b_j),\pi _1(b_k)\}$ with $j,k\neq i$ and set $M_1:= \pi
_1^{-1}(L_1)$ instead of $M_2$ and $L_2$ in \eqref{L2M2}. Moreover \eqref{eqtxt} clearly holds if we replace the $\varepsilon_2$ with $\varepsilon_1$ and $\varepsilon_3$ with $\varepsilon_j$ for any $j=3, \ldots , k$. As already highlighted this concludes the proves since $\pi_j(a_i)=\pi_j(b_i)$ for $i=1,2,3$ and for $j=1, \ldots, k$ which is a contradiction with $A$ and $B$ being distinct.  
\end{proof}

This shows that under the assumption \eqref{disjoint}, we can exclude the case where the Segre variety has at least two factors of projective dimension 2. 

\smallskip

Let us focus on the 4-factors case.

\def\lemmareimmesso{

\begin{lemma}\label{alphamax} Let $Y=\mathbb{P}^2\times \mathbb{P}^1\times \mathbb{P}^1\times \mathbb{P}^1 $, let $q\in \sigma_3^0(\nu(Y))$, with $\nu(Y)$ the concise Segre of $q$ and such that  there exist two disjoint sets $A,B\in \mathcal{S}(Y,q)$. If one among the projections $\eta_{2\vert S} $, $\eta_{3\vert S}$ and  $\eta_{4\vert S}$ is injective, then there exist a point $o$ in one of the $\mathbb{P}^1$'s such that it's preimage via either $\pi_2$ or $\pi_3$ or $\pi_4$ intersects $S:=A\cup B$ in 3 points and this is the maximum number of points that $S$ can share with such a preimage.
\end{lemma}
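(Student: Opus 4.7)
The proof splits into an upper bound (no fibre $\pi_i^{-1}(o)\cap S$ with $i\in\{2,3,4\}$ contains more than three points) and an existence statement (some such fibre contains exactly three points). Both parts proceed by contradiction, and the main tool is Lemma~\ref{lemmariformulato} combined with Lemma~\ref{3points}.

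First I would establish the upper bound. Assume $\sharp(\pi_i^{-1}(o)\cap S)\geq 4$ for some $i\in\{2,3,4\}$ and $o\in\mathbb{P}^1_i$, and set $M_i:=\pi_i^{-1}(o)\in|\mathcal{O}_Y(\varepsilon_i)|$. Since $A\cap B=\emptyset\subset M_i$, Lemma~\ref{lemmariformulato} applies. Autarky forbids $\pi_i(S)$ from collapsing to a single point, so $S\not\subset M_i$, forcing $h^1(\mathcal{I}_{S\setminus S\cap M_i}(\hat{\varepsilon}_i))>0$. A single point imposes independent conditions on a globally generated line bundle, so the residual set has exactly two points $\{u,v\}$; Lemma~\ref{3points} then forces $\eta_i(u)=\eta_i(v)$, meaning that $u$ and $v$ agree on every coordinate except the $i$-th. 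Autarky on $\pi_i$ also gives $\sharp(A\cap M_i),\sharp(B\cap M_i)\leq 2$, so $\sharp(A\cap M_i)=\sharp(B\cap M_i)=2$ and $u\in A$, $v\in B$. If the injective projection is $\eta_{i|S}$ (i.e.\ $j=i$), this already contradicts injectivity. For $j\neq i$, I would then consider the divisor $N_j:=\pi_j^{-1}(\pi_j(u))$: since $\pi_j(u)=\pi_j(v)$ (as $j\neq i$), $N_j$ automatically contains both $u$ and $v$, and a further application of Lemma~\ref{lemmariformulato} (with Autarky on $\pi_j$ preventing $S\subset N_j$) together with a case analysis on $\sharp(S\cap N_j)$ would contradict injectivity of $\eta_{j|S}$.

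For the existence part I would suppose that every fibre $\pi_i^{-1}(o)\cap S$ contains at most two points. Then each $\pi_i$ with $i\in\{2,3,4\}$ is at most two-to-one on $S$, $\sharp\pi_i(S)\geq 3$, and Autarky yields $\sharp\pi_i(A),\sharp\pi_i(B)\geq 2$. A short analysis of the partition sizes, combined with the injectivity of $\eta_{j|S}$ (which forbids two points of $S$ agreeing on all coordinates except the $j$-th), narrows down the possible distributions of $A$ and $B$ across the fibres. I would then apply Lemma~\ref{lemmariformulato} to an appropriate divisor of bidegree $\varepsilon_h+\varepsilon_\ell$ covering four of the six points of $S$: the two residual points would have to be identified by $\eta_h$, which in turn forces some fibre of $\pi_h$ to contain at least three points of $S$, contradicting the assumption.

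The main obstacle is the sub-case $j\neq i$ of the upper bound: a single application of Lemma~\ref{lemmariformulato} on $M_i$ only produces the coincidence $\eta_i(u)=\eta_i(v)$, which is a priori compatible with $\eta_{j|S}$ being injective. Propagating this into a contradiction requires either iterating with the second divisor $N_j$ adapted to the $j$-th factor, or exploiting the geometric information carried by the line $L=\langle\nu(u),\nu(v)\rangle\subset X$ and the induced relation $q_A-q_B\in L\cap\langle\nu(a_1),\nu(a_2),\nu(b_1),\nu(b_2)\rangle$ coming from the rank-$3$ decompositions $q=\nu(u)+q_A=\nu(v)+q_B$. The combinatorial bookkeeping needed to rule out every distribution of $A$ and $B$ across the fibres of $\pi_j$ is the delicate technical step.
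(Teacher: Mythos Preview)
Your two–part strategy (first bound every fibre by $3$, then produce a fibre of size $3$) is not the route the paper takes, and the obstacle you flag in the sub-case $j\neq i$ is exactly where your scheme stalls. When $\sharp(M_i\cap S)=4$ and the injective projection is $\eta_{j}$ with $j\neq i$, passing to $N_j=\pi_j^{-1}(\pi_j(u))$ does not close the argument: if $\sharp(S\cap N_j)=2$ the residual is the four points of $S\cap M_i$, and $h^1(\mathcal I_{S\cap M_i}(\hat\varepsilon_j))>0$ only says that these four points become linearly dependent in $\nu_j(Y_j)$ (they already sit in a $\mathbb P^2\times\mathbb P^1\times\{o\}$, a $\mathbb P^5$), which is perfectly compatible with $\eta_{j|S}$ being injective. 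So the iteration you propose does not by itself produce a contradiction, and the ``combinatorial bookkeeping'' you allude to would in effect have to rebuild most of the paper's argument.

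The paper avoids this difficulty by \emph{not} separating the upper bound from the existence. It introduces the recursive maxima
\[
\alpha_4=\max_{i=2,3,4;\,o}\sharp(\pi_i^{-1}(o)\cap S),\quad
\alpha_3=\max_{i=2,3;\,o}\sharp\bigl(\pi_i^{-1}(o)\cap(S\setminus K_4)\bigr),\quad
\alpha_2=\max_{o}\sharp\bigl(\pi_2^{-1}(o)\cap(S\setminus K_3\cup K_4)\bigr),
\]
and then excludes all configurations $(\alpha_2,\alpha_3,\alpha_4)$ except $(\alpha_3,\alpha_4)=(3,3)$. The key point is that the bound $\alpha_4\le 3$ is obtained \emph{only after} one has already shown $\alpha_3\ge 2$: since $K_3$ and $K_4$ meet $S$ in disjoint pieces one has $\alpha_3+\alpha_4\le 6$, so $\alpha_4=4$ forces $\alpha_3=2$, and the single remaining case $(\alpha_3,\alpha_4)=(2,4)$ is then killed by applying Remark~\ref{remark2pti} first to $K_4$ and then to a divisor of type $\pi_1^{-1}(\text{line})$. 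In other words the ``existence'' step feeds back into the ``upper bound'' step; treating them independently, as you do, loses precisely the leverage needed for $j\neq i$.

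A secondary remark: you organise the whole argument around contradicting the injectivity of $\eta_{j|S}$, but the paper's proof barely uses this hypothesis as a target. The recursive $\alpha$–analysis is essentially self-contained (and indeed in the final version, Proposition~\ref{6pP2P1P1P1P1}, the injectivity of some $\eta_{h|S}$ is \emph{derived} as a preliminary step rather than assumed). Trying to funnel every case into a violation of $\eta_{j|S}$ injectivity is what makes your $j\neq i$ sub-case intractable.
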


\begin{proof} \begin{AB}
Firstly define recursively for $j=4,3$ the integers such that the preimages of points $o\in \mathbb{P}^1$ intersect maximally the set $S$:

\begin{equation}\label{alpha4}
\alpha_4:=\max \{\sharp(\pi_i^{-1}(o)\cap S)\}_{o \in \mathbb{P}^1;i=2,\ldots , 4}.
\end{equation}
By rearranging if necessary, we can assume that the index $i=2, \ldots , 4$ realizing $\alpha_4$, is $i=4$. Then define 
\begin{equation}\label{alpha3}
\alpha_3:=\max \{\sharp(\pi_i^{-1}(o)\cap (S\setminus (S\cap K_4)))\}_{o \in \mathbb{P}^1;i=2,3}.
\end{equation}
By rearranging if necessary, we can assume that the index $i=2,3$ realizing $\alpha_3$, is $i=3$. Finally define
\begin{equation}\label{alpha2}
\alpha_2:=\max \{\sharp(\pi_2^{-1}(o)\cap (S\setminus (S\cap K_4\cup K_3)))\}_{o\in \mathbb{P}^1}.
\end{equation}

Now let $o_j\in \mathbb{P}^1$, $j=2,3,4$ be the points realizing $\alpha_2, \alpha_3, \alpha_4$ respectively, and call 
\begin{equation}\label{Kj}
   K_j:=\pi_j^{-1}(o_j) \hbox{ for } j=2,3,4.
\end{equation}  

\begin{quote}
    Remark that by Autarky assumption ${1\leq \alpha_3 \leq \alpha_4\leq 5 }$.
\end{quote}
It is easy to see that $\alpha_4$ cannot be 5. In fact if $\alpha_4=5$, then  $\sharp (S\setminus S\cap K_4)=1 $ which implies that $h^1(\mathcal{I}_{S\setminus S\cap K_4}(1,1,1,0))=0 $, which is a contradiction with \cite[Lemmas 2.4  and 2.5]{BalBerChristGes} (also \cite[Lemma 5.1, item (b)]{BalBer2}).
\begin{quote}
    So the possibilities for the $\alpha_j$'s are  $1\leq \alpha_3 \leq \alpha_4 \leq 4$.
\end{quote}
Now we exclude the cases $\alpha_i=1$, $i=3,4$ by showing that $\alpha_3\neq 1$.

\def\alphaneq1{
    If $\alpha_4=1$  the $\pi_i|_S$'s, for $i=2,3,4$, are all injective. By  Remark \ref{sulP2aebindip}, both $\pi_1|_A$ and $\pi_1|_B$ are also injective, so each fiber of an element of $\pi_{1\vert S} $ has at most cardinality $2$. Thus we may find $ M_i\in \vert \mathcal{O}_Y(\varepsilon_i) \vert$, with $i=1,2,3$, such that $\sharp (S \cap (M_2\cup M_3)     ) =2$ and such that $\sharp (S \cap (M_1\cup M_2\cup M_3) ) =4 $. Set
    \begin{equation}\label{F}
        F:= M_1\cup M_2\cup M_3 
    \end{equation} and let $\{ u,v\} :=S\setminus S\cap F$. By \cite[Lemmas 2.4  and 2.5]{BalBerChristGes} (also \cite[Lemma 5.1, item (b)]{BalBer2}) we get in particular that \begin{AB} $h^1(\mathcal{I}_{\{u,v\}}(\varepsilon_4))>0 $,\end{AB} i.e. $\pi_4(u)=\pi_4(v) $ which contradicts injectivity of $\pi_{4\vert S} $ that we have for free since we are assuming that $\alpha_4=1$. So $\alpha_4 \neq 1$.
    %\begin{quote}
    %    We are now left with $2\leq \alpha \leq 4$.
    %\end{quote}
    %We are still in the hypothesis that the maximum intersection between $S$ and the preimage of a point in $\mathbb{P}^{1}$ is realized by $\pi_4$ and $\alpha >1$. We can therefore look at the other preiamges different from $\pi_4$ and set
    %
    %\begin{equation}\label{beta}
    %\beta:=\max \{\sharp(\pi_i^{-1}(o)\cap S)\}_{o \in \mathbb{P}^1;i=2,3}.
    %\end{equation}
    %
    %
    %By rearranging if necessary, we can assume that the index realizing $\beta$ is $i=3$ and let $o_3\in \mathbb{P}^1$ the point such that $\beta=\sharp (\pi_3^{-1}(o_3)\cap S)$ and  call 
    %\begin{equation}\label{K3}
    %  K_3:=\pi_i^{-1}(o_3).
    %\end{equation}  
}
First of all, remark that if $\alpha_3=1$, then $\pi_3|_S$ is injective. So the idea is to
%the proof works as for $\alpha_4=1$ by building a divisor 
build a divisor $F$ such that $\sharp (S\setminus F)=2$ and applying Remark \ref{remark2pti} to $F$: the existence of such an $F$ will contradict %(as for $\alpha=1$)
the injectivity of  $\pi_{3\vert S} $. % that we have for free since we are assuming that $\alpha_3=1$. 
The divisor $F$ 
%, for the case $\alpha_3=1$,
is as follows: 
$F=K_4 \cup H_2 \cup H_3$
where $K_4$ is as in \eqref{Kj}  and, for $i=2,3$,
$H_i \in |\mathcal{O}_Y(\epsilon_i)|
\hbox{ such that }H_i\cap (S\setminus S \cup K_4) \neq \emptyset$. 
This shows $\alpha_3 \neq 1$.
\begin{quote}
    We are therefore left with $1 < \alpha_3 \leq \alpha_4 =2,3,4$.
\end{quote}

Suppose that $\alpha_3=\alpha_4=2$.
%The divisor $F$ such that $\sharp (S\setminus F) =2$ is now $F=S\cap (K_3 \cup K_4)$ where the $K_j$'s, $j=3,4$, are as in \eqref{Kj}. This fact alone is not a contradiction anymore since we don't have for free the injecrivity of one of the $\pi_i$'s.
By the construction of the $K_i$'s in \eqref{Kj} for $i=2,3,4$, it's easy to show that $$S=\mathbin{\coprod}_{i=2}^4 S\cap K_i.$$
%the set of 2 points in which $K_3$ intersects $S$ is disjoint by the set of 2 points in which $K_4$ intersects $S$.
%Consider $S_{3,4}=S\cap (K_3 \cup K_4)$ where the $K_j$'s, $j=3,4$, are as in \eqref{Kj}, then by construction $\sharp S_{3,4}=4$.
%\\
The only non-obvious fact is that $\sharp (S\cap K_2)=2$. On one hand we may always take $H\in \vert \mathcal{O}_Y(\varepsilon_2)\vert$ such that $   \sharp(S\setminus S\cap (K_3 \cup K_4)) \cap H)\neq 0$, so such a $H$  intersects $S$ non trivially and $K_2$ is among those $H$'s. On the other hand we may also suppose
\begin{equation}\label{dag}
    \alpha_2\neq 1\tag{\dag}
\end{equation}
(where $\alpha_2$ is defined in \eqref{alpha2})
%Let $K_2\in \vert \mathcal{O}_Y(\varepsilon_2)
%\vert$ be
%a divisor intersecting $S$ out of $K_3$ and $K_4$ non trivially: $  \sharp(S\setminus S\cap (K_3 \cup K_4)) \cap K_2)\neq 0$. We can assume that $S\cup K_2=2$ 
because if $\alpha_2=1$ then
 the divisor $K_4\cup K_3\cup K_2\in |\mathcal{O}_Y(\hat \varepsilon_1)|$ therefore 
$h^1(\mathcal{I}_{S\setminus (K_4 \cup K_3 \cup K_2)}(\varepsilon_1))=0$ which is a 
contradiction with \cite[Lemmas 2.4  and 2.5]{BalBerChristGes}
(also \cite[Lemma 5.1, item (b)]{BalBer2}). 
%So we may assume  that $   \sharp(S\setminus S\cap (K_3 \cup K_4)) \cap K_2)=2$.
%By construction we have that $S\setminus (F\cap H_2)\neq \emptyset$.
%\\ The construction is such that 
%$$S \cap K_3\cap K_4=S\cap K_3\cap K_2=S\cap K_4 \cap K_2=\emptyset ,$$ so by 

So, since $S=\mathbin{\coprod}_{i=2}^4 S\cap K_i$ and $\sharp (S\cap K_i)=2$ for $i=2,3,4$, we can apply Remark  \ref{remark2pti} %\cite[Lemmas 2.4  and 2.5]{BalBerChristGes} 
separately to the divisors $K_i\cup K_j $ with $i\neq j$ and
%, $K_3 \cup K_4$ and $K_4 \cup K_2$. 
get that $h^1(\mathcal{I}_{S\cap K_i}(\varepsilon_1+\varepsilon_i))>0 $ for $i=2,3,4$ and so 
%, $h^1(\mathcal{I}_{S\cap K_3}(1,0,1,0))>0 $ and  $h^1(\mathcal{I}_{S\cap K_2}(1,1,0,0))>0 $ respectively;  
$\pi_1(S\cap K_i)=1$ for $i=2,3,4$.
%$\sharp \pi_1(S\cap K_4)=\sharp \pi_4(S\cap K_4)=\sharp\pi_1(S\cap K_3)=\sharp \pi_1(S\cap K_2)=1 $.
In order to get a contradiction it is sufficient to apply again Remark \ref{remark2pti} to $\pi_1^{-1}(\langle \pi_1(S\cap K_3) , \pi_1(S\cap K_2)  \rangle) $. This shows that 
%Now consider a line $L_1\subset \mathbb{P}^2 $ containing $\pi_1(S\cap K_3) $ and $\pi_1(S\cap K_2) $. Since $S \cap K_4\cap K_3=S\cap K_4\cap H_2=S\cap K_3 \cap K_2=\emptyset $, by applying \cite[Lemmas 2.4  and 2.5]{BalBerChristGes} to the residual of these schemes with respect to $L_1$ we get that $h^1(\mathcal{I}_{S\cap K_4}(0,1,1,1))>0$. So 
$\sharp (\pi_i(S\cap K_4))=1 $ for  $i=2,3,4$. Now since also $\sharp (\pi_1(S\cap K_4))=1 $, then $\sharp (S\cap K_4)=1 $, which is a contradiction with the assumption $\alpha_3=2 $. 

\begin{quote}
   This proves that  $\alpha_3>1$, and $2<\alpha_4=3,4$.
\end{quote}

The same argument used to prove \eqref{dag} allows to exclude also the cases $(\alpha_2, \alpha_3, \alpha_4)=(1,2,3)$.
Moreover also $(\alpha_3,\alpha_4)=(2,4)$ can be excluded using the same argument of the case $(\alpha_2,\alpha_3,\alpha_4)=(2,2,2)$ above applying Remark \ref{remark2pti} since if $(\alpha_3,\alpha_4)=(2,4)$ we have that $K_4$ plays the role of $M$ in the remark.

\begin{quote}
    We are therefore left with the unique possibility of $(\alpha_3, \alpha_4)=(3,3)$.
\end{quote}

\end{AB}

\end{proof}

}
%\begin{pp}
%provare a vedere se si può escludere che $ \alpha_2\neq 0$ qui giù
%\end{pp}

\begin{proposition}
\label{6pP2P1P1P1P1}
Let $Y=\mathbb{P}^2\times\mathbb{P}^1\times\mathbb{P}^1 \times \mathbb{P}^{1}$. Let $q\in \sigma_3^0(\nu(Y))$, with $\nu(Y)$ the concise Segre of $q$. There do not exist two disjoint sets $ A,B \in \mathcal{S}(Y,q)$ evincing the rank of $ q$.
\end{proposition}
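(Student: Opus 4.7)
The plan is to argue by contradiction. Suppose $A=\{a_1,a_2,a_3\}$ and $B=\{b_1,b_2,b_3\}$ in $\mathcal{S}(Y,q)$ are two disjoint subsets evincing the rank of $q$, and set $S:=A\cup B$. By Remark \ref{sulP2aebindip} both $\pi_{1|A}$ and $\pi_{1|B}$ are injective with $\langle\pi_1(A)\rangle=\langle\pi_1(B)\rangle=\mathbb{P}^2$, so in particular $h^1(\mathcal{I}_A(\varepsilon_1))=h^1(\mathcal{I}_B(\varepsilon_1))=0$.

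The first phase is to control fibre-intersections of $S$. Set
\[
\alpha:=\max\{\,\sharp(S\cap\pi_i^{-1}(o))\;:\;i\in\{2,3,4\},\;o\in\mathbb{P}^1\,\},
\]
and, after fixing a fibre $K_4:=\pi_4^{-1}(o_4)$ realising this maximum, define $\alpha_3$ as the analogous maximum computed on $S\setminus(S\cap K_4)$ over fibres of $\pi_2,\pi_3$. Via a case analysis using Lemma \ref{lemmariformulato} and Remark \ref{remark2pti} I would exclude every value of $(\alpha_3,\alpha)$ other than $(3,3)$. Concretely: $\alpha=1$ (all three $\pi_{i|S}$ injective) is excluded by building a divisor $F=M_1\cup M_2\cup M_3\in|\mathcal{O}_Y(\varepsilon_1+\varepsilon_2+\varepsilon_3)|$ with $\sharp(S\cap F)=4$ and using Lemma \ref{lemmariformulato} to force the residual pair to collapse under $\pi_4$; $\alpha=4$ or $\alpha=5$ is ruled out by Remark \ref{remark2pti}, which forces the residual pair to lie on a line of $X$, contradicting either Remark \ref{retta} (if the pair lies in the same decomposition) or the injectivity of $\pi_{1|A},\pi_{1|B}$; the case $\alpha=2$ is eliminated by iterating Remark \ref{remark2pti} on fibres of the $\pi_i$'s to force $\sharp\pi_1(S\cap K)=1$ on each containing pair, again contradicting injectivity; analogous arguments dispose of $\alpha_3=1$ and of the mixed cases $(\alpha_3,\alpha)=(2,3),(2,4)$. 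The upshot is $\alpha=\alpha_3=3$, so that setting $K_3:=\pi_3^{-1}(o_3)$ for a suitable $o_3$ we have $S=S_1\sqcup S_2$ with $S_1:=S\cap K_4$, $S_2:=S\cap K_3$ and $\sharp S_1=\sharp S_2=3$.

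The next step exploits the $(3,3)$-structure. By Autarky neither $A$ nor $B$ can be contained in $K_4$, hence $\sharp(A\cap S_1),\sharp(B\cap S_1)\leq 2$, so up to interchanging $A$ and $B$ we may assume $A\cap S_1=\{a_1,a_2\}$, $A\cap S_2=\{a_3\}$, $B\cap S_1=\{b_3\}$, $B\cap S_2=\{b_1,b_2\}$. I would then apply Lemma \ref{lemmariformulato} with $D=K_3$: the residual is $S_1$ and, since $S\not\subseteq K_3$, we must have $h^1(\mathcal{I}_{S_1}(\hat{\varepsilon_3}))>0$. As $\pi_4$ is constant on $S_1$, this reduces to three points on the Segre embedding of $\mathbb{P}^2\times\mathbb{P}^1$ in bidegree $(1,1)$; by very-ampleness these points must lie on a line of that Segre, and the two possible line types combined with injectivity of $\pi_{1|A}$ force a common $\pi_2$-value $o_2^{(1)}$ on $S_1$ and collinear $\pi_1$-images in $\mathbb{P}^2$. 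The symmetric application with $D=K_4$ produces a common $\pi_2$-value $o_2^{(2)}$ on $S_2$, and Autarky applied to $\pi_2(A)$ forces $o_2^{(1)}\neq o_2^{(2)}$.

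Finally I would close the argument by a second round of Lemma \ref{lemmariformulato}, this time with $D=\pi_2^{-1}(o_2^{(1)})$: then $S\cap D=S_1$ and $\mathrm{Res}_D(S)=S_2$, and the same $\mathbb{P}^2\times\mathbb{P}^1$ collinearity analysis (using now $\pi_{1|B}$ injective) forces $\pi_4$ to be constant on $S_2$; the mirror choice $D=\pi_2^{-1}(o_2^{(2)})$ forces $\pi_3$ to be constant on $S_1$. In particular $a_1$ and $a_2$ now agree on $\pi_2,\pi_3$ and $\pi_4$, whence $\nu(a_1),\nu(a_2)$ both lie on the linearly embedded $\mathbb{P}^2$-fibre $\nu(\mathbb{P}^2\times\{(\pi_2(a_1),\pi_3(a_1),o_4)\})\subseteq X$, hence on a line of $X$, contradicting Remark \ref{retta}. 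The main obstacle is the fibre-size reduction to $(\alpha_3,\alpha)=(3,3)$: each subcase requires its own divisor and a careful application of Lemma \ref{lemmariformulato} or Remark \ref{remark2pti}, and carries the bulk of the technical bookkeeping; once that reduction is in hand, the two rounds of Lemma \ref{lemmariformulato} described above cleanly produce the line-in-$X$ contradiction.
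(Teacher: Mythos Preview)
Your approach mirrors the paper's closely: both reduce, via a case analysis on the fibre-intersection sizes $\alpha_4,\alpha_3$, to the configuration $(\alpha_3,\alpha_4)=(3,3)$ and then derive a contradiction from the resulting partition $S=S_1\sqcup S_2$. Your final contradiction (forcing $a_1,a_2$ onto a line of $X$ and invoking Remark~\ref{retta}) is essentially what the paper obtains as well: the paper proves $\sharp\pi_i(S\cap K_4)=1$ for all $i>1$, which likewise places $a_1,a_2$ in a common linearly embedded $\mathbb{P}^2$-fibre of $X$.

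There is, however, a genuine gap in your collinearity step. When you apply Lemma~\ref{lemmariformulato} with $D=K_3$ to obtain $h^1(\mathcal{I}_{S_1}(\hat\varepsilon_3))>0$ and reduce to ``three collinear points'' in the Segre of $\mathbb{P}^2\times\mathbb{P}^1$, you are tacitly assuming that the projection of $S_1=\{a_1,a_2,b_3\}$ to factors $(1,2)$ is injective. If instead some $a_i$ and $b_3$ share the same $(\pi_1,\pi_2)$-image (this is \emph{not} excluded by injectivity of $\pi_{1|A}$ or $\pi_{1|B}$, nor by Remark~\ref{retta}, since $a_i\in A$ and $b_3\in B$), then $h^1>0$ holds for free and you cannot conclude that $\pi_2$ is constant on all of $S_1$. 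The paper handles precisely this degenerate case in the second paragraph of its Claim (asserting $\sharp\pi_2(S\cap K_4)=1$), using an auxiliary divisor $T\in|\mathcal{I}_E(1,1,0,0)|$ for a two-point subset $E\subset S_1$ and a further application of Lemma~\ref{lemmariformulato} to force $T\supset S_1$. Your sketch for $\alpha=4$ has an analogous issue: when the residual pair $\{u,v\}$ with $\eta_4(u)=\eta_4(v)$ consists of one point from $A$ and one from $B$, neither Remark~\ref{retta} nor injectivity of $\pi_{1|A},\pi_{1|B}$ gives a contradiction; the paper instead disposes of $(\alpha_3,\alpha_4)=(2,4)$ by a separate divisor argument.
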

 
\begin{proof}

Assume by contradiction that there exist two disjoint sets $A,B \in \mathcal{S}(Y,q) $ evincing the rank of $q$ and moreover assume that no $\eta_{i\vert S}$ is injective, for $i=2,3,4$.

By Remark \ref{retta}, for each $ i=2,3,4$ there exists $a \in A $, $b \in B $ such that $\eta_i(a)=\eta_i(b) $.
Fix $H:=\pi_1^{-1}(L) $, where $L\subset \mathbb{P}^2 $ is a line containing $ \pi_1(a_1)$ and $\pi_1(a_2) $, where $a_1,a_2\in A $. Since we assumed that no $ \eta_{i\vert S}$ is injective, then there exist $b_1,b_2 \in B$ such that $\pi_1(a_i)=\pi_1(b_i) $, for $i=1,2 $. 
Thus $H\supset \{a_1,a_2,b_1,b_2 \} $ and by Autarky $S \not\subset H $, so there is at least an element of $S $ out of $ H$, e.g. $a_3\in S\setminus \{a_1,a_2,b_1,b_2 \} $. 
Thus we have $h^1\big(\mathcal{I}_{S\setminus S\cap H}(0,1,1,1)\big)=0 $ contradicting Lemma \ref{lemmariformulato}.
So there exists at least one integer $h\in \{2,\dots,4\} $ such that $ \eta_{h\vert S}$ is injective.

Firstly define recursively the integers such that the preimages of points $o\in \mathbb{P}^1$ intersect maximally the set $S$:

\begin{equation}\label{alpha4}
\alpha_4:=\max \{\sharp(\pi_i^{-1}(o)\cap S)\}_{o \in \mathbb{P}^1;i=2,\ldots , 4}.
\end{equation}
By rearranging if necessary, we can assume that the index $i=2, \ldots , 4$ realizing $\alpha_4$, is $i=4$. Call $ K_4:=\pi_4^{-1}(o)$. Then define 
\begin{equation}\label{alpha3}
\alpha_3:=\max \{\sharp(\pi_i^{-1}(o)\cap (S\setminus (S\cap K_4)))\}_{o \in \mathbb{P}^1;i=2,3}.
\end{equation}
By rearranging if necessary, we can assume that the index $i=2,3$ realizing $\alpha_3$, is $i=3$. Call $ K_3:=\pi_3^{-1}(o)$. Finally define
\begin{equation}\label{alpha2}
\alpha_2:=\max \{\sharp(\pi_2^{-1}(o)\cap (S\setminus (S\cap K_4\cup K_3)))\}_{o\in \mathbb{P}^1}.
\end{equation} 
So if we denote by $o_j\in \mathbb{P}^1$, $j=2,3,4$ the points realizing $\alpha_2, \alpha_3, \alpha_4$ respectively then we call 
\begin{equation}\label{Kj}
   K_j:=\pi_j^{-1}(o_j) \hbox{ for } j=2,3.
\end{equation}  

\begin{quote}
    Remark that by Autarky assumption 
     ${1 \leq \alpha_3 \leq \alpha_4\leq 5 }$.
\end{quote}
It is easy to see that $\alpha_4$ cannot be 5. In fact if $\alpha_4=5$, then  $\sharp (S\setminus S\cap K_4)=1 $ which implies that $h^1(\mathcal{I}_{S\setminus S\cap K_4}(1,1,1,0))=0 $, which is a contradiction with Lemma \ref{lemmariformulato}.
\begin{quote}
    So the possibilities for  $\alpha_3$ and $\alpha_4$ are
    $1\leq  \alpha_3 \leq \alpha_4 \leq 4$.\end{quote}

Let us show that
\begin{equation}\label{dag}    \alpha_2\neq 1.\tag{\dag}
\end{equation}

Assume that $(\alpha_2, \alpha_3, \alpha_4)=(1,1,1) $. 
In such a case the divisor $ K_2\cup K_3\cup K_4\in \vert \mathcal{O}_Y(\hat{\varepsilon}_1) \vert $ would contain exactly 3 points of $ S$.
Moreover if $ h^1\big(\mathcal{I}_{S\setminus (S\cap K_2\cup K_3\cup K_4)}(\varepsilon_1)\big)>0$ then by Lemma \ref{3points} we would have a contradiction with $(\alpha_3,\alpha_4)=(1,1) $. Therefore if $(\alpha_2, \alpha_3, \alpha_4)=(1,1,1) $ we must have $ h^1\big(\mathcal{I}_{S\setminus (S\cap K_2\cup K_3\cup K_4)}(\varepsilon_1)\big)=0$, but this is a contradiction with Lemma \ref{lemmariformulato}. 
Thus if $ \alpha_2=1$ then $K_3\cup K_4 $ should contain at least 3 points of $ S$, i.e. $ \alpha_3\geq 1$ and $ \alpha_4\geq 2$.

\def\alphaneq1{
    If $\alpha_4=1$  the $\pi_i|_S$'s, for $i=2,3,4$, are all injective. By  Remark \ref{sulP2aebindip}, both $\pi_1|_A$ and $\pi_1|_B$ are also injective, so each fiber of an element of $\pi_{1\vert S} $ has at most cardinality $2$. Thus we may find $ M_i\in \vert \mathcal{O}_Y(\varepsilon_i) \vert$, with $i=1,2,3$, such that $\sharp (S \cap (M_2\cup M_3)     ) =2$ and such that $\sharp (S \cap (M_1\cup M_2\cup M_3) ) =4 $. Set
    \begin{equation}\label{F}
        F:= M_1\cup M_2\cup M_3 
    \end{equation} and let $\{ u,v\} :=S\setminus S\cap F$. By \cite[Lemmas 2.4  and 2.5]{BalBerChristGes} (also \cite[Lemma 5.1, item (b)]{BalBer2}) we get in particular that $h^1(\mathcal{I}_{\{u,v\}}(\varepsilon_4))>0 $, i.e. $\pi_4(u)=\pi_4(v) $ which contradicts injectivity of $\pi_{4\vert S} $ that we have for free since we are assuming that $\alpha_4=1$. So $\alpha_4 \neq 1$.
}
 Now assume that $(\alpha_2,\alpha_3)=(1,1)$. Then $\pi_{3|S}$ is injective. The idea is to build a divisor $F\in \vert \mathcal{O}_Y(\varepsilon) \vert$ with $\varepsilon=\sum_{i\in I} \varepsilon_i $, for some finite $I \in \{1,\dots,k \}$, such that $\sharp (S\setminus F\cap S)=2$ and apply Remark \ref{remark2pti} to $F$: the existence of such a $F$ will contradict the injectivity of  $\pi_{3\vert S} $. 
 Let 
$H_i \in |\mathcal{O}_Y(\varepsilon_i)|
\hbox{ such that }H_i\cap (S\setminus S \cap K_4) \neq \emptyset$ for $i=2,3$.
The divisor $F$ is either  
$F=K_4$, or $ F=K_4\cup H_3$ or $ K_4 \cup H_2 \cup H_3$  if $\alpha_4=4,3,2$ respectively.
The case $ (\alpha_2,\alpha_3,\alpha_4)=(1,2,2)$  can be easily excluded since $\sharp (S\setminus S\cap K_2\cup K_3\cup K_4)=1 $ and by Lemma \ref{lemmariformulato} we would have $h^1\big(\mathcal{I}_{S\setminus S\cap (K_2\cup K_3\cup K_4)}(\varepsilon_1)\big)>0 $, which is absurd.
For the same reason $ (\alpha_2,\alpha_3,\alpha_4)=(1,2,3)$ is also impossible because then $\sharp (S\cap (K_3\cup K_4))=5 $ and by Lemma \ref{lemmariformulato} we would have $h^1\big(\mathcal{I}_{S\setminus S\cap (K_3\cup K_4)}(1,1,0,0)\big)>0 $, which is a contradiction.
This shows $\alpha_2\neq 1 $.

\begin{quote}
    We are therefore left with $\alpha_2\neq 1 < \alpha_3 \leq \alpha_4 =2,3,4$.
\end{quote}

Suppose that $\alpha_3=\alpha_4=2$. With these assumptions one also gets $\alpha_2=2$. Indeed on one hand we just showed that we
may always take $H\in \vert \mathcal{O}_Y(\varepsilon_2)\vert$ such that $   \sharp(S\setminus S\cap (K_3 \cup K_4)) \cap H)\neq 0$, so such a $H$  intersects $S$ non trivially and $K_2$ is among those $H$'s. On the other hand $\alpha_2\neq 1$ by \eqref{dag}. So $\sharp (S\cap K_2)=2$.
By the construction of the $K_i$'s in \eqref{Kj} for $i=2,3,4$, it's easy to show that $$S=\mathbin{\coprod}_{i=2}^4 S\cap K_i.$$
So, since $S=\mathbin{\coprod}_{i=2}^4 S\cap K_i$ and $\sharp (S\cap K_i)=2$ for $i=2,3,4$, we can apply Remark \ref{remark2pti} separately to the divisors $K_i\cup K_j $ with $i\neq j$ and get that $h^1\big(\mathcal{I}_{S\cap K_i}(\varepsilon_1+\varepsilon_i)\big)>0 $ for $i=2,3,4$ and so $\pi_1(S\cap K_i)=1$ for $i=2,3,4$.
In order to get a contradiction it is sufficient to apply again Remark \ref{remark2pti} to $\pi_1^{-1}(\langle \pi_1(S\cap K_3) , \pi_1(S\cap K_2)  \rangle) $. This shows that $\sharp (\pi_i(S\cap K_4))=1 $ for  $i=2,3,4$. Now since also $\sharp (\pi_1(S\cap K_4))=1 $, then $\sharp (S\cap K_4)=1 $, which is a contradiction with the assumption $\alpha_3=2 $. 

\begin{quote}
   This proves that  $1<\alpha_2 \leq \alpha_3$, and $2<\alpha_4=3,4$.
\end{quote}

The case $(\alpha_3,\alpha_4)=(2,4)$ can be excluded using the same argument of the case $(\alpha_2,\alpha_3,\alpha_4)=(2,2,2)$ above applying Remark \ref{remark2pti} since if $(\alpha_3,\alpha_4)=(2,4)$ we have that $K_4$ plays the role of $M$ in the remark.
\begin{quote}
    We are therefore left with the unique possibility of $(\alpha_3, \alpha_4)=(3,3)$.
\end{quote}

\vspace{-0.8cm}

\begin{quote}

\begin{claim}\label{claim63}
$\sharp (\pi_2(S\cap K_4))=1 $.
\end{claim}

\begin{proof}[Proof of Claim \ref{claim63}:]
Since we are in the hypothesis $\alpha_4=3$, the projection of $S\cap K_4$  onto the first two factors of $Y$ is made by at most 3 points.

Suppose that such a projection is made by exactly 3 points. Call $Z $ the image of the projection of $S\cap K_4 $ onto the first two factors. Since $h^1(\mathbb{P}^2\times \mathbb{P}^1,\mathcal{I}_Z(1,1))>0 $ those points must lie on a line $L$ when applying the Segre embedding.
Moreover from Remark \ref{sulP2aebindip} we know that
$\pi_1( A) $ and $\pi_1( B) $ are sets of linearly independent points
%$\pi_{1\vert A} $ and $\pi_{1\vert B} $ 
 and since linear subspaces of the Segre variety are all contained in a factor, we get that $L \subset \mathbb{P}^2 $. Thus $ \sharp (\pi_2(S\cap K_4))=1$ proving the claim in this case.

If the projection of $S\cap K_4$ onto the first two factors is made by less than 3 points, there exist at least two points, $u,v \in S \cap K_4 $ such that they share the same image under the projection. %\begin{pp}
Remark that if we consider $ E\subset S\cap K_4$ such that $\sharp E=2 $ and take $ T\in \vert \mathcal{I}_E(1,1,0,0) \vert$, then $T\supset S\cap K_4 $. Indeed if $ S\cap K_4\not\subset T$ then we have that $T\cup K_3 $ contains exactly five points of $S $, which leads to a contradiction because by Lemma \ref{lemmariformulato} we would have $h^1\big(\mathcal{I}_{S\setminus S\cap T \cup K_3}(\hat{\varepsilon}_3)\big)>0 $.
Therefore also the third point of $S\cap K_4 $ share the same image of $ u$ and $ v$ and we are done. 
\end{proof}
\end{quote}

Using the third factor instead of the second one, one gets $\sharp(\pi_3(K_4\cap S))=1 $ and since we assumed that $\alpha_4 $ is reached on the fourth factor we also have $\sharp(\pi_4(K_4\cap S))=1 $. The same argument can be applied to $S\cap K_3$ which leads to $\sharp(\pi_2(K_3\cap S))= \sharp(\pi_4(K_3\cap S))=1$.
Thus $ \sharp(\pi_i(K_4\cap S))= \sharp(\pi_i(K_3\cap S))=1$ for all $i>1 $ which contradicts Autarky.
\end{proof}

Since the identifiability of rank-3 tensors in $\langle \nu((\mathbb{P}^1)^{\times 4}) \rangle$ is already fully described by Remark \ref{allP14}, we are therefore done with the  order-4 tensors and we can focus on tensors of order bigger or equal than 5. So we will deal with $Y=\mathbb{P}^{n_1}\times (\mathbb{P}^1)^{l} $, with $n_1=1,2 $ and $l\geq4 $.

\begin{lemma}
\label{etainiettivi}

Let $q$ be a rank-3 tensor of order at least 5 and let $\nu(Y)$ be its concise Segre. If  there exist two disjoint sets $A,B\in \mathcal{S}(Y,q)$ as in \eqref{disjoint}, then there exists at least an index $i \in \{1,\dots, k \} $ such that $\eta_{i\vert S} $ and $\pi_{i\vert S}$ are injective. 

\end{lemma}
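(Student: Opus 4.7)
The plan is to find an index $i\in\{1,\ldots,k\}$ such that both $\eta_{i|S}$ and $\pi_{i|S}$ are injective. I would first show that no pair of distinct points $u,v\in S$ satisfies $|I(u,v)|=k-1$, where $I(u,v):=\{j:\pi_j(u)=\pi_j(v)\}$; equivalently, no two points of $S$ span a line contained in $X$. For same-decomposition pairs this is exactly Remark \ref{retta}. For a cross pair $(a,b)\in A\times B$ with $|I(a,b)|=k-1$, both $\nu(a)$ and $\nu(b)$ lie on a fiber line $L\subset X$ in the unique disagreeing direction $i^*$. Combining the two representations $q=\alpha\nu(a)+\beta\nu(a_2)+\gamma\nu(a_3)=\alpha'\nu(b)+\beta'\nu(b_2)+\gamma'\nu(b_3)$, I would show that $L\subset \langle\nu(A)\rangle\cap\langle\nu(B)\rangle$; since $q\notin L$ (otherwise $r_X(q)\leq 2$), this forces $\langle\nu(A)\rangle=\langle\nu(B)\rangle$, making all six points of $S$ coplanar and contradicting Lemma \ref{dimS2ranknot3} because $\sharp S=6\geq 5$.

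With the first step in hand, the injectivity of $\eta_{i|S}$ for every $i$ is automatic: $\eta_i(u)=\eta_i(v)$ with $u\neq v$ would give $|I(u,v)|=k-1$, which is now ruled out. For the remaining claim that some $i$ has $\pi_{i|S}$ injective, I would argue by contradiction. Assume $\pi_{i|S}$ is non-injective for every $i\in\{1,\ldots,k\}$, so that some pair in $S$ shares the $i$-th coordinate for each $i$. Autarky (Lemma \ref{concision}) forces $\pi_i(A)$ and $\pi_i(B)$ to span $\mathbb{P}^{n_i}$, whence the three sets $I(a_1,a_2), I(a_1,a_3), I(a_2,a_3)$ must be pairwise disjoint --- else some coordinate would be common to all three points of $A$ --- and similarly for $B$. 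Adapting the $\alpha_j$-style maximization of fiber intersections from the proof of Proposition \ref{6pP2P1P1P1P1}, I would build divisors $M_i=\pi_i^{-1}(o_i)\in|\mathcal{O}_Y(\varepsilon_i)|$ maximizing $\sharp(M_i\cap S)$ and apply Lemma \ref{lemmariformulato} iteratively to the resulting residual schemes, exploiting the hypothesis $k\geq 5$ to exhaust the available factors and produce a contradiction.

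The main obstacle is the cross-pair case of step one: establishing $L\subset \langle\nu(A)\rangle$ (and symmetrically $L\subset \langle\nu(B)\rangle$). Starting from $\nu(a)\in\langle\nu(A)\rangle$, one must exploit the two expressions of $q$ to manipulate the linear dependence relations among $\nu(a),\nu(a_2),\nu(a_3),\nu(b_2),\nu(b_3)$ --- invoking Remark \ref{retta} to exclude $\nu(a_2)$ and $\nu(a_3)$ from $L$ --- in order to conclude $\nu(b)\in\langle\nu(A)\rangle$. The counting/divisor argument of step three, while intricate, largely follows the pattern established in Proposition \ref{6pP2P1P1P1P1}, and so should reduce to a case analysis on the $(\alpha_2,\alpha_3,\alpha_4)$-like profiles already treated there.
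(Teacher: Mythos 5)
Your first step contains the decisive gap, and you have correctly identified it yourself: there is no justification for $L\subset \langle\nu(A)\rangle$. For a cross pair $a\in A$, $b\in B$ with $\eta_{i}(a)=\eta_{i}(b)$, the only point the two planes $\langle\nu(A)\rangle$ and $\langle\nu(B)\rangle$ are known to share is $q$ itself; nothing forces $\nu(b)$ into $\langle\nu(A)\rangle$, and the relation $q=\alpha'\nu(b)+\beta'\nu(b_2)+\gamma'\nu(b_3)$ only places $\nu(b)$ in $\langle \{q,\nu(b_2),\nu(b_3)\}\rangle$, whose other two generators are not in $\langle\nu(A)\rangle$. Remark \ref{retta} genuinely rules out same-decomposition pairs on a fiber line (there the line can be traded for a single point inside \emph{one} spanning set, dropping the rank), but that mechanism has no analogue for a pair split between $A$ and $B$. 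So the stronger statement you aim for in Step 1 --- injectivity of \emph{every} $\eta_{j|S}$ --- is not obtainable this way; in the paper it is only derived \emph{after} the lemma (in Proposition \ref{6pP1P1P1}), using the lemma's conclusion. The paper's own argument for the $\eta$-part is purely combinatorial and only produces \emph{one} good index: if no $\eta_{i|S}$ were injective, then (by Remark \ref{retta}) each index $i$ would be witnessed by a cross pair $(a,b)\in A\times B$ agreeing on all coordinates except the $i$-th; each such pair can witness at most one index (two indices would force $a=b$), and chasing the $\ge 5$ resulting incidences on the six points forces either $A\cap B\neq\emptyset$ or $\sharp\pi_h(A)=1$ (or $\sharp\pi_h(B)=1$) for some $h$, contradicting Autarky. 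This is where the hypothesis $k\ge 5$ is actually used.

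The second half of your plan is also not the paper's route and is left essentially unexecuted. The paper does not assume all $\pi_{i|S}$ non-injective; it fixes an index $i$ for which $\eta_{i|S}$ is injective and shows that $\pi_{i|S}$ must then be injective too, by taking $H\in|\mathcal{O}_Y(\varepsilon_i)|$ to be the preimage of a point (or a general line, if that factor is a $\mathbb{P}^2$) through a fiber of $\pi_{i|S}$ containing two points of $S$, invoking Lemma \ref{lemmariformulato}, and running a case analysis on $\sharp(S\setminus S\cap H)\in\{2,3,4\}$ via Lemma \ref{3points} and Proposition \ref{sigma20}. This implication ``$\eta_{i|S}$ injective $\Rightarrow$ $\pi_{i|S}$ injective'' is what guarantees that the \emph{same} index serves both conclusions; your architecture only delivers a common index because Step 1 would make all $\eta_{j|S}$ injective, so the gap in Step 1 propagates. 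Finally, transplanting the $(\alpha_2,\alpha_3,\alpha_4)$-profile analysis of Proposition \ref{6pP2P1P1P1P1} to $k\ge 5$ factors is not routine: that case analysis is tailored to exactly three $\mathbb{P}^1$-factors, and asserting it ``should reduce'' to the profiles already treated does not constitute a proof.
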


\begin{proof} 

\medskip

[Injectivity of $\eta_{i\vert S}$.]

Assume that no $\eta_{i\vert S} $ is injective, then by Remark \ref{retta} for any $i=1,\dots ,k $ there exist an element $a\in A$ and an element $b \in B$ such that $\pi_h(a) =\pi_h(b)$ for any $h\neq i $. It is easy to check that this condition, applied to two disjoint sets of 3 points each, and at least five $\eta_i$'s, imposes either that $A\cap B \neq \emptyset$ (contradiction) or that one of the two sets (either $A$ or $B$) depends only on 4 factors (contradicting Autarky). 

\def\tesi{
\begin{pp}
Since $\eta_{i\vert S} $ is not injective, for each $ i=1,\dots,3$ we have that $\pi_j(a_i) =\pi_j(b_i)$ for all $j\neq i $, with $ j=1,\dots,5$.\end{pp} The same holds for $\eta_{4\vert S} $: if $\eta_{4\vert S}(a_i)=\eta_{4\vert S}(b_i) $ for some $i \in \{1,2,3 \} $ we already get a contradiction with the condition $A\cap B=\emptyset $. We thus assume without loss of generality that $\eta_{4\vert S}(a_1)=\eta_{4\vert S}(b_2) $.\\
If $\eta_{5\vert S}(a_i)=\eta_{5\vert S}(b_i) $ with $i=1,2,3 $, as before we get a contradiction since $A\cap B=\emptyset $. \begin{AB}
If either $\eta_5(a_2)=\eta_5(b_1)$ or $\eta_5(a_1)=\eta_5(b_2) $
% share the same image under $\eta_5 $,
then $a_2=b_1 $ and $a_1=b_2$, which contradicts $A\cap B=\emptyset $. If either $\eta_5(a_3)=\eta_5(b_1) $ or $\eta_5(a_1)=\eta_5(b_3) $ 
%have the same image with $\eta_5 $, 
we would get that $b_1,b_2,b_3 $ have the same component on the second factor and on the first one, respectively ?????, which contradicts Autarky. Finally if $a_2,b_3 $ or $a_3,b_2 $ have equal image through $\eta_5$, we would have that $a_1,a_2,a_3 $ share the same first component, contradicting Autarky.
\end{AB}}

\medskip

[Injectivity of $\pi_{i\vert S}$.]

Assume that $\eta_{i|S}$ is injective and that $\pi_{i\vert S}$ is not injective. If $i=1 $ and the first factor of $ Y$ is $ \mathbb{P}^2$ take $ H\in \vert\mathcal{O}_Y(\varepsilon_i) \vert $ as the preimage of a general line that contains exactly one point of $S $; otherwise take $H\in \vert \mathcal{O}_Y(\varepsilon_i) \vert$ as the preimage of a point of $S $. We remark that in both cases we get that $\sharp (\pi_i(S\cap H))=1$. Since by Autarky $S\not\subset H $, by Lemma \ref{lemmariformulato} we have that $$ h^1\big(\mathcal{I}_{S\setminus S\cap H}(\hat{\varepsilon}_i)\big)>0.$$
We distinguish different cases depending on $\sharp (S\setminus S\cap H) $.

\begin{enumerate}

    \item\label{sharp4}

    Assume $\sharp (S\setminus S\cap H)=4 $ and call $S':=\eta_i(S\setminus S\cap H) $; let $A' \subset S' $ such that $\sharp A'=2 $ and call $B':=S'\setminus A' $, so $\sharp B'=2 $.
    Since  $\eta_{i \vert S}$ is injective we have that $h^1(Y_i,\mathcal{I}_{S'}(\hat{\varepsilon_i}))=h^1\big(\mathcal{I}_{S\setminus S\cap H}(\hat{\varepsilon}_i)\big)>0 $. So $\langle \nu_i(A')\rangle \cap \langle \nu_i(B') \rangle \neq \emptyset $, which means that we have at least a point $q' \in \langle \nu_5(Y_i)\rangle$ of rank $2 $ for which $A' $ and $B' $ are different subsets evincing its rank. Thus by Proposition \ref{sigma20}, since $ \sharp \mathcal{S}(Y_i,q')>1$, the points in $A' $ and $B' $ only depend on two factors, i.e. $\sharp (\pi_j(S'))=1 $ for at least two indices $j\in \{1,\dots,k \}$. Without loss of generality assume it happens for $j=1,2 $. If the first factor of $Y$ is $\mathbb{P}^2 $, let $M_1\in \vert \mathcal{I}_{\pi_1(S')}(\varepsilon_1) \vert  $ be the preimage of a general line containing $\pi_1(S')$  and let $\{ M_2\}:=\vert \mathcal{I}_{\pi_2(S')}(\varepsilon_2) \vert $. Otherwise let $\{ M_j\}:=\vert \mathcal{I}_{\pi_j(S')}(\varepsilon_j) \vert $, for $j=1,2 $; in both cases then $h^1\big(\mathcal{I}_{S\setminus S\cap M_j}(\hat{\varepsilon}_j)\big)>0 $. So $S\setminus S\cap M_j=S\cap H $ and $\sharp (\eta_j(S\cap H))=1 $, for $j=1,2 $. If we call $S\cap H=\{u,v \} $, it follows that $\eta_1(u)=\eta_1(v) $ and $\eta_2(u)=\eta_2(v) $, so in particular we get that $ \pi_j(u)=\pi_j(v)$ for any $j$, which is a contradiction.
    
    \item\label{sharp3} Assume $\sharp (S\setminus S\cap H)=3 $. By Proposition \ref{3points}  %either 
    there exists  $j\neq i$ such that  $\sharp (\pi_h(S\setminus S\cap H))=1$ for all $h\neq i,j $.
    For all $ h>1$ with $h\neq j,i $, since $h^0(\mathcal{O}_Y(\varepsilon_h))=2 $ we get $ h^0\big(\mathcal{I}_{S\setminus S\cap H}(\varepsilon_h)\big)=1$. Set $\{ M_h \}:=\vert \mathcal{I}_{S\setminus S\cap H}(\varepsilon_h) \vert  $, if $h=1 $ and the first factor is $\mathbb{P}^2 $ take $M_h\in \vert \mathcal{I}_{S\setminus S \cap H}(\varepsilon_h) \vert  $ as the preimage of a general line, otherwise call $\{ M_h \}:=\vert \mathcal{I}_{S\setminus S\cap H}(\varepsilon_h) \vert  $. Since we took $H $ such that $\sharp \pi_i(S\cap H)=1$, there exists at least an index $t\neq i $ such that $\sharp \pi_t(S\cap H)\geq 2 $. Thus we can find $D\in \vert \mathcal{O}_Y(\varepsilon_t)\vert $ containing exactly one point of $S\cap H $.\\ For all $s\neq t$ set $W_s:=M_s\cup D $, so $ \sharp (S\setminus S\cap W_s)=2$; we remark that $W_j\cap S=W_s\cap S $ for any $j,s $ thus we may call $E:=S\setminus S\cap W_s$. \\ 
    By Lemma \ref{lemmariformulato} we have that $ h^1(\mathcal{I}_E((1,\ldots ,1)-\varepsilon_s - \varepsilon_t))>0$, so $\sharp \pi_j(E)=1 $ for all $j\neq s,t$. 
    Since $E\subset H$ we have that $ \pi_i(E)=1$, moreover taking $s=1,2,3 $, if $t\neq j $, we get that  $\sharp E=1$, thus a contradiction. It remains to study what happens when $t=j $, i.e. if $\sharp (\pi_j(S\cap H))\geq 2 $. In such a case, when we let $s$ varies in  $\{ 1,\ldots , k \}\setminus \{i,j\}$, we get $\sharp \pi_s(S\cap H)=
    %\sharp \pi_3(S\cap H )=\sharp \pi_5(S\cap H)=\sharp \pi_2(S\cap H)=
    1$. Thus $\eta_j(S\cap H)=1 $, i.e. the three points of $S\cap H $ actually lies on a line, which is a contradiction with Remark \ref{retta}, because two of them are points of $A $ or $ B$.    
    
    \item\label{sharpleq2} Assume $\sharp (S\setminus S\cap H)\leq 2 $. Since $ h^1\big(\mathcal{I}_{S\setminus S \cap H}(\hat{\varepsilon}_i)\big)>0$, we get that $\sharp (S \setminus S\cap H)=2 $ and that $\sharp \eta_i(S\setminus S\cap H) =1$, which is a contradiction.
\end{enumerate}\vspace{-0,7cm}
\end{proof}

With these two lemmas we can conclude the case of two disjoint sets $A,B\in \mathcal{S}( 
Y,q)$ with $q$ of rank-3.

\begin{proposition}\label{6pP1P1P1} 
Let $q\in \sigma_3^0(\nu(Y))$  be a tensor of order $k\ge 5$ and let $\nu(Y)$ be its concise Segre. Then $\mathcal{S}( 
Y,q)$ does not contain  two disjoint sets.

\end{proposition}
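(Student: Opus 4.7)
The plan is to reduce the order-$k$ statement to smaller orders via a linear projection and conclude by induction on $k \ge 5$.

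By Proposition \ref{6pP2P2Pn}, I may assume at most one factor of $Y$ has projective dimension $2$, so $Y = \mathbb{P}^{n_1} \times (\mathbb{P}^1)^{k-1}$ with $n_1 \in \{1,2\}$ and $k-1 \ge 4$. Suppose for contradiction that disjoint $A, B \in \mathcal{S}(Y, q)$ exist, and set $S := A \cup B$. Since $k \ge 5$, Lemma \ref{etainiettivi} supplies an index $i$ for which both $\pi_{i|S}$ and $\eta_{i|S}$ are injective; in particular $\eta_i(A) \cap \eta_i(B) = \emptyset$ and both are triples in $Y_i$.

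Next I would set up a linear projection from a suitable fibre of $\pi_i$: pick $o \in \mathbb{P}^{n_i} \setminus \pi_i(S)$, set $M := \pi_i^{-1}(o)$ and $\Lambda := \langle \nu(M)\rangle$, and let $\ell : \mathbb{P}^N \setminus \Lambda \to \mathbb{P}^{N'}$ be the corresponding linear projection, which on $\nu(Y) \setminus \nu(M)$ coincides with $\nu_i \circ \eta_i$. Concision/Autarky (Lemma \ref{concision}) forces $q \notin \Lambda$, since otherwise the concise Segre of $q$ would omit the $i$-th factor; therefore $\ell(q)$ is well-defined and
\[
\ell(q) \in \langle \nu_i(\eta_i(A))\rangle \cap \langle \nu_i(\eta_i(B))\rangle.
\]
A short Autarky/concision check, using that $\pi_{j|A}$ spans $\mathbb{P}^{n_j}$ for every $j \ne i$, shows that $\ell(q)$ has $\nu_i(Y_i)$-rank exactly $3$ and that its concise Segre is $\nu_i(Y_i)$, so $\eta_i(A)$ and $\eta_i(B)$ are disjoint elements of $\mathcal{S}(Y_i, \ell(q))$.

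If $k \ge 6$ the inductive hypothesis applied to $\ell(q)$ in $Y_i$ (of order $k-1 \ge 5$) gives the contradiction. For the base case $k = 5$ with $n_1 = 2$, I arrange $i \ne 1$ using the freedom afforded by Lemma \ref{etainiettivi}, so $Y_i = \mathbb{P}^2 \times (\mathbb{P}^1)^3$ and Proposition \ref{6pP2P1P1P1P1} closes the argument.

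The main obstacle is the remaining base case $Y = (\mathbb{P}^1)^5$, where $Y_i = (\mathbb{P}^1)^4$ is genuinely defective by Remark \ref{allP14} and a single projection does not conclude. I would handle this by iterating the projection: pick a further index $j \ne i$ with the required injectivity on the projected image, project again to land in $(\mathbb{P}^1)^3$ equipped with two disjoint rank-$3$ decompositions, and then combine Proposition \ref{prop_P1} (which forces the image point to sit in the tangential variety of $\nu((\mathbb{P}^1)^3)$) with a lift-back argument to contradict either the rank of $q$ or the conciseness of $\nu(Y)$ for $q$ in $(\mathbb{P}^1)^5$.
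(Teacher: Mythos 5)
Your opening moves (reducing to at most one $\mathbb{P}^2$ factor via Proposition \ref{6pP2P2Pn} and invoking Lemma \ref{etainiettivi}) are fine, but the induction-by-projection scheme has two genuine gaps. First, the assertion that $\ell(q)$ has $\nu_i(Y_i)$-rank exactly $3$ with $\eta_i(A),\eta_i(B)\in\mathcal{S}(Y_i,\ell(q))$ is not a "short Autarky/concision check." What Autarky and the linear independence of $\nu_i(\eta_i(A))$ do give you is that $\ell(q)$ lies in no span of a \emph{proper subset} of $\eta_i(A)$ (or of $\eta_i(B)$); but this does not exclude that $\ell(q)$ has rank $1$ or $2$ via some entirely different set $E\subset Y_i$, in which case $\eta_i(A)$ and $\eta_i(B)$ do not evince the rank of $\ell(q)$ and the inductive hypothesis says nothing. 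Rank can drop under linear projection, and nothing in your argument controls this.

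Second, and more seriously, the base case $Y=(\mathbb{P}^1)^{\times 5}$ is where the content of the proof would have to live, and it is left as a promissory note. Projecting once lands in $(\mathbb{P}^1)^{\times 4}$, where by Remark \ref{allP14} \emph{every} rank-$3$ point has a positive-dimensional family of decompositions, so disjoint pairs exist in abundance; projecting again lands in $(\mathbb{P}^1)^{\times 3}$, where rank-$3$ points are tangential points with $\dim\mathcal{S}\ge 2$ and again admit disjoint decompositions. So neither target yields a contradiction on its own, and the unspecified "lift-back argument" is exactly the missing proof. For contrast, the paper's argument is direct and short: from Lemma \ref{etainiettivi} one index $i$ has $\eta_{i|S}$ (hence $\pi_{i|S}$) injective, and if some $\eta_{j|S}$ were non-injective then two points of $S$ would agree under every $\pi_h$ with $h\ne j$, in particular under $\pi_i$ — so \emph{all} $\pi_{i|S}$ are injective. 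One then picks divisors $H_i\in|\mathcal{I}_{a_i}(\varepsilon_i)|$ for $i=1,\dots,5$ (possible since $k\ge 5$), each meeting $S$ in exactly one point, so that $S\setminus S\cap(H_1\cup\dots\cup H_5)$ is a single point; then $h^1$ of a single point vanishes, contradicting Lemma \ref{lemmariformulato}. I would redirect your effort toward that kind of divisor-covering argument rather than trying to salvage the projection.
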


\begin{proof}

By Lemma \ref{etainiettivi} there exists at least an index $i \in \{1,\dots, k \} $ such that  $\eta_{i\vert S}$ is injective, from which follows that the corresponding $\pi_{i\vert S} $ is also injective.
Now if $\eta_{j\vert S} $ is not injective for some $j\neq i $ then  $\pi_{i\vert S} $ is not injective, which is a contradiction with the assumption that $ \eta_{i\vert S}$ is injective.
Therefore thus  $\eta_{j|S}$ and $\pi_{j\vert S} $ have to be injective for all $j=1, \ldots , k$.

Write $A:=\{ a_1,a_2,a_3\} $ and $B:=\{ a_4,a_5,a_6 \} $. If the first factor is a $ \mathbb{P}^2$ take $L_1 \in \mathbb{P}^2 $ as a general line containing $ \pi_1(a_1)$ and define $H_1\in \vert \mathcal{I}_{a_1}(\varepsilon_1)\vert $ as $H_1:=\pi_1^{-1}L_1 $. For $i=2,\dots,5 $ take $ \{ H_i \}:=\vert \mathcal{I}_{a_i}(\varepsilon_i) \vert $ (this is possible since by hypothesis $k\geq 5$). Otherwise for all $ i=1,\dots,k$ take $ \{ H_i \}:=\vert \mathcal{I}_{a_i}(\varepsilon_i) \vert $. In both cases, since every  $\pi_{i\vert S}$ is injective we get that $H_1\cup \cdots \cup H_5 $ contains exactly $5$ points of $ S$. Thus from Lemma \ref{lemmariformulato} we get that $h^1\big(\mathcal{I}_{S\setminus (S\cap H_1\cup \cdots \cup H_5)}(0,0,0,0,0,1,\dots,1)\big)>0 $ which is a contradiction since $\sharp (S\setminus (S\cap H_1\cup \cdots \cup H_5))=1 $.

\end{proof}

\def\propfinale{

\begin{pp}
\begin{proposition}
Proposizione finale con tutto tutto.\\
usiamo $Y=\mathbb{P}^{n_1}\times \cdots \times \mathbb{P}^{n_k} $, with $k\geq 5 $,  $n_i\in \{1,2 \}$ a piacimento.
\begin{AB}
enunciato?
\end{AB}
\end{proposition}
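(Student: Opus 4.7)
The strategy is proof by contradiction: assume $A=\{a_1,a_2,a_3\}$ and $B=\{a_4,a_5,a_6\}$ are disjoint elements of $\mathcal{S}(Y,q)$ and set $S:=A\cup B$. I will exhibit five divisors whose union meets $S$ in exactly five of its six points and then invoke Lemma \ref{lemmariformulato} to reach a contradiction.

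The first step is to upgrade the single-index injectivity supplied by Lemma \ref{etainiettivi} to simultaneous injectivity of every $\eta_{j\vert S}$ and every $\pi_{j\vert S}$. Let $i$ be the index furnished by that lemma, so that both $\eta_{i\vert S}$ and $\pi_{i\vert S}$ are injective. If $\eta_{j\vert S}$ were non-injective for some $j\neq i$, two distinct points of $S$ would agree on every coordinate except the $j$-th, and in particular on the $i$-th coordinate, contradicting injectivity of $\pi_{i\vert S}$. Hence every $\eta_{j\vert S}$ is injective. A parallel cohomological argument, reproducing the second half of the proof of Lemma \ref{etainiettivi} with the role of $i$ played by each $j$ in turn, then upgrades this to injectivity of every $\pi_{j\vert S}$.

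With all $\pi_{j\vert S}$ injective I construct divisors $H_j\in|\mathcal{O}_Y(\varepsilon_j)|$, for $j=1,\dots,5$, each meeting $S$ in the single point $a_j$. When the $j$-th factor of $Y$ is a $\mathbb{P}^1$, take $H_j:=\pi_j^{-1}(\pi_j(a_j))$; injectivity of $\pi_{j\vert S}$ yields $H_j\cap S=\{a_j\}$. If the first factor is a $\mathbb{P}^2$, I instead let $H_1$ be the preimage of a generic line in $\mathbb{P}^2$ through $\pi_1(a_1)$ avoiding the other points of $\pi_1(S)$. Since $k\geq 5$, five such divisors are available, and the union $D:=H_1\cup\cdots\cup H_5\in|\mathcal{O}_Y(\varepsilon_1+\cdots+\varepsilon_5)|$ satisfies $S\cap D=\{a_1,\dots,a_5\}$, so the residual set $S\setminus(S\cap D)$ is the single point $\{a_6\}$.

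Finally, I apply Lemma \ref{lemmariformulato} to $D$. The complementary multidegree is $(0,0,0,0,0,1,\dots,1)$, and a single point always imposes one independent condition on any globally generated line bundle (including the trivial one in the boundary case $k=5$), so $h^1(\mathcal{I}_{\{a_6\}}(0,0,0,0,0,1,\dots,1))=0$. The lemma then forces $S\subset D$, which is absurd because $a_6\notin D$. The main obstacle I anticipate is the second injectivity upgrade: passing from the single index supplied by Lemma \ref{etainiettivi} to all indices for the $\pi_{j\vert S}$'s requires reusing the cohomological case analysis internal to that lemma rather than a purely combinatorial argument, and one must handle the dichotomy between a $\mathbb{P}^2$-factor and the $\mathbb{P}^1$-factors when choosing the divisor $H_j$.
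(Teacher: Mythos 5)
Your proposal is correct and follows essentially the same route as the paper's own proof of Proposition \ref{6pP1P1P1}: invoke Lemma \ref{etainiettivi} to get one injective $\eta_{i\vert S}$, propagate injectivity to all $\eta_{j\vert S}$ and $\pi_{j\vert S}$, then choose five divisors $H_1,\dots,H_5$ (with the line-in-$\mathbb{P}^2$ adjustment for a $\mathbb{P}^2$ factor) whose union meets $S$ in exactly five points, and conclude via Lemma \ref{lemmariformulato} since a single residual point has vanishing $h^1$. Your explicit remark on the boundary case $k=5$ (trivial complementary multidegree) is a small clarification the paper leaves implicit.
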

\end{pp}

\begin{pp}
\begin{proof}
By Proposition \ref{6pP1P1P1} we exclude the case in which all factors have projective dimension $1 $, i.e. we exclude the case in which $q\in \sigma_3^0(\nu(Y))$, with $\nu(Y)$ the concise Segre of $q$ and $Y=(\mathbb{P}^1)^{\times k} $.\\
So we have that $n_1+\cdots +n_k >k $ because at least one between the  $n_i$'s must be $2$. By an argument similar to the one used in Lemma \ref{etainiettivi}, we have that at least one  $\eta_{i\vert S}$ is injective. Non so perchè ma ci riconduciamo al caso cinque fattori.\\
\begin{AB}
Questo è ricopiato identico da sopra... a che serve? dove si usa la dimensione 2 di uno dei fattori???
\end{AB} Thus by Lemma \ref{piiniettivo}, the corresponding $\pi_i$ is injective.
So we may say that $\pi_{i\vert S} $ is injective for all $ i$ such that $\eta_{i\vert S} $ is injective. But if $\eta_{i\vert S} $ is not injective for some $i\neq 5 $ then we get that $\pi_{5\vert S} $ is not injective, which is a contradiction with the injectivity of $ \eta_{5}$. Thus all $\pi_{i\vert S} $ are injective.\\ Write $A:=\{ a_1,a_2,a_3\} $ and $B:=\{ a_4,a_5,a_6 \} $ and take $ \{ H_i \}:=\vert \mathcal{I}_{a_i}(\varepsilon_i) \vert $ , for $i=1,\dots,5 $. Since every  $\pi_{i\vert S}$ is injective we get that $H_1\cup \cdots \cup H_5 $ contains exactly $5$ points of $ S$, contradicting the assumption $A,B\in \mathcal{S}(Y,q) $. questa ultima frase che ci porta a una contraddizione non l ho capita.

\end{proof}
\end{pp}
}

\section{Identifiability of rank-3 tensors}\label{sec.last}

The following theorem completely characterizes the identifiability of 
any rank-3 tensor and it is the main theorem of the present paper.

\begin{theorem}\label{main_theorem} Let $Y=\mathbb{P}^{n_1}\times \cdots \times \mathbb{P}^{n_k}$ be the multiprojective space of the concise Segre of a rank-3 tensor $q$. Denote with $\mathcal{S}(Y,q)$ the set of all subsets of $Y$ computing the rank of $q$. The rank-3 tensor $q$ is identifiable except in the following cases:
\begin{enumerate}
\item\label{1.} $q$ is a rank-3 matrix, in this case $\dim(\mathcal{S}(Y,q))=6$;
\item\label{2.} $q$ belongs to a tangent space of the Segre embedding of $Y=\mathbb{P}^1\times \mathbb{P}^1\times\mathbb{P}^1$, in this case $\dim(\mathcal{S}(Y,q))\geq 2 $;
\item\label{6.} $q$ is an order-4 tensor of $\sigma_3^0(Y)$ with $Y=\mathbb{P}^1\times\mathbb{P}^1\times\mathbb{P}^1\times\mathbb{P}^1$, in this case $\dim(\mathcal{S}(Y,q))\geq 1 $. 
\item\label{3.} $q$ is as in Example \ref{caso3} where $Y=\mathbb{P}^2\times\mathbb{P}^1\times\mathbb{P}^1$, in this case $\dim(\mathcal{S}(Y,q))=3 $;
\item\label{4.} $q$ is as in Example \ref{caso4} where $Y=\mathbb{P}^2\times\mathbb{P}^1\times\mathbb{P}^1$, in this case $\mathcal{S}(Y,q) $ contains two different $4 $-dimensional families;

\item\label{5.} $q $ is as in Proposition \ref{x1.1} where $ Y=\mathbb{P}^{n_1}\times \cdots \times \mathbb{P}^{n_k}$ is such that $k\ge 3$, $n_i\in \{1,2\}$ for $i=1,2$, $n_i=1$ for $i>2$ and $\sum _{i=1}^{k} n_i\ge 4$. In this case $\dim(\mathcal{S}(Y,q))\geq 2 $ and if $ n_1+n_2+k\geq 6$ then $\dim(\mathcal{S}(Y,q))=2 $. 

\end{enumerate}

\end{theorem}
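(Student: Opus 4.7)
The plan is to assemble the theorem from the results already proved. First, by Concision/Autarky (Lemma \ref{concision}) together with Remark \ref{remarkconcision}, I may assume $Y=\mathbb{P}^{n_1}\times\cdots\times\mathbb{P}^{n_k}$ with each $n_i\in\{1,2\}$, since the concise Segre of a rank-3 tensor has this form. If $k=2$ we are in the matrix case, and Remark \ref{caso1} gives us case \eqref{1.} directly by a standard defectivity computation on $\sigma_3(\nu(\mathbb{P}^2\times\mathbb{P}^2))$. So from here on I assume $k\geq 3$. Suppose $q$ is not identifiable: pick two distinct $A,B\in\mathcal{S}(Y,q)$. By Corollary \ref{solo5o6}, $\sharp(A\cup B)\in\{5,6\}$, and I split the analysis along this dichotomy.

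For the case $\sharp(A\cup B)=5$, I would invoke Proposition \ref{prop:P2P1i} whenever $Y$ has at least one factor of projective dimension $2$, and Proposition \ref{prop_P1} when all factors are $\mathbb{P}^1$. The former forces $q$ to be of the shape in Proposition \ref{x1.1}, which is exactly case \eqref{5.}; by Proposition \ref{x1.1} itself we know that $\mathcal{S}(Y,q)$ is parametrized by $\mathcal{S}(Y',q')$ for a $2\times 2$ rank-$2$ matrix $q'$, which (via Corollary \ref{spaziosolS2}) gives $\dim\mathcal{S}(Y,q)\geq 2$, with equality when the ambient space has enough factors, as stated in the theorem. The latter proposition leaves only $k=3$ (tangential locus on $\nu((\mathbb{P}^1)^{\times 3})$, case \eqref{2.}) or $k=4$ (defective $\sigma_3$, case \eqref{6.}), and both dimension bounds come respectively from \cite{BalBer3} and from Remark \ref{allP14}.

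For the case $\sharp(A\cup B)=6$, I would rule out most configurations. Proposition \ref{6pP2P2Pn} excludes $Y$ having two or more factors equal to $\mathbb{P}^2$; Proposition \ref{6pP2P1P1P1P1} excludes $Y=\mathbb{P}^2\times(\mathbb{P}^1)^{\times 3}$; and Proposition \ref{6pP1P1P1} excludes $k\geq 5$ whenever $Y$ is a product of $\mathbb{P}^1$'s (and by the same method also when exactly one factor is $\mathbb{P}^2$, after possibly invoking Lemma \ref{etainiettivi}). The only survivors are therefore $Y=\mathbb{P}^2\times\mathbb{P}^1\times\mathbb{P}^1$ and $Y=(\mathbb{P}^1)^{\times 4}$; the second again falls under case \eqref{6.} by Remark \ref{allP14}, and for the first Corollary \ref{P2P1P1} says that $q$ has to arise either from the irreducible-conic construction of Example \ref{caso3} (yielding case \eqref{3.} with $\dim\mathcal{S}(Y,q)=3$) or from the reducible-conic construction of Example \ref{caso4} (yielding case \eqref{4.} with two $4$-dimensional families).

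The proof is therefore essentially bookkeeping, but the step I expect to require the most care is verifying that the propositions from Sections \ref{section:5points} and \ref{section:6points} really cover all $(n_1,\dots,n_k)$ with $n_i\in\{1,2\}$ and $k\geq 3$, in particular handling the overlap between the ``one $\mathbb{P}^2$ factor plus several $\mathbb{P}^1$'s'' case in both the $5$-point and $6$-point regimes and making sure the dimension claims match. For case \eqref{5.} in particular, I would double-check the dimension $\dim\mathcal{S}(Y,q)=2$ when $n_1+n_2+k\geq 6$ by noting that the family $\{A\cup\{p\}: A\in\mathcal{S}(Y',q')\}$ of Proposition \ref{x1.1} is in bijection with $\mathcal{S}(Y',q')$, which is $2$-dimensional by Corollary \ref{spaziosolS2}, so the upper bound matches the lower bound and closes the argument.
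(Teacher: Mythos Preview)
Your proposal is correct and follows the paper's own proof essentially line for line: the paper likewise splits on $\sharp(A\cup B)\in\{5,6\}$ via Corollary \ref{solo5o6}, handles $\sharp=5$ by Propositions \ref{prop:P2P1i} and \ref{prop_P1}, and for $\sharp=6$ invokes Propositions \ref{6pP2P2Pn}, \ref{6pP2P1P1P1P1}, \ref{6pP1P1P1} and Corollary \ref{P2P1P1}. The one slip in your bookkeeping is the list of survivors in the $\sharp=6$ case: after you exclude $\ge 2$ copies of $\mathbb{P}^2$, the case $\mathbb{P}^2\times(\mathbb{P}^1)^3$, and all $k\ge 5$, what remains for $k=3$ is not only $\mathbb{P}^2\times\mathbb{P}^1\times\mathbb{P}^1$ but also $(\mathbb{P}^1)^3$; the paper handles this exactly as in the proof of Proposition \ref{prop_P1}, by recalling that every rank-$3$ tensor with concise Segre $\nu((\mathbb{P}^1)^3)$ lies on the tangential variety, hence falls under case \eqref{2.}.
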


\begin{proof}

In case \ref{1.}. the point $q$ is a rank-3 matrix therefore it is highly not-identifiable. See Remark \ref{caso1} for the computation of the dimension of $\mathcal{S}(Y,q)$.

Case \ref{2.}. is also well known: see \cite[Remark 3]{BalBer3}.

Case \ref{6.}. corresponds to the defective 3-th secant variety of the Segre embedding of $Y=(\mathbb{P}^1)^{\times 4}$ and the fact that all the elements of $\sigma_3^0(\nu(Y))$ are not-identifiable is shown in Remark \ref{allP14}. 
The fact that $\dim(\mathcal{S}(Y,q))=1 $ for the generic rank-3 tensor depends on the fact that the 3-th defect $\delta_3$ of  $\nu((\mathbb{P}^1)^{\times 4})$ is exactly 1 (cf. \cite{AOP}).  Moreover by \cite[Cap II, Ex 3.22, part (b)]{Hart} we get that for any rank $ 3$ tensor $ q$, the dimension $\dim(\mathcal{S}(Y,q))\geq 1 $.

Cases \ref{3.}., \ref{4.}. and \ref{5.}. are treated in Examples \ref{caso3} and \ref{caso4} and in Proposition \ref{x1.1}  %\ref{caso5}
respectively.

\smallskip

All the above considerations prove that the list of cases enumerated in the statement corresponds to non indentifiable rank-3 tensors. We need to show that such a list is exhaustive. Since the matrix case is already fully covered by case \ref{1.} we only need to care about tensors of order at least 3. 

First of all recall that by Remark \ref{remarkconcision}, the concise Segre of a rank-3 tensor $q$  is $\nu(\mathbb{P}^{n_1} \times \cdots \times  \mathbb{P}^{n_k} )$, with $n_1, \dots ,n_k \in \{1,2 \}$. 
Then consider two distinct sets $A,B \in \mathcal{S}(Y,q)$. By Corollay \ref{solo5o6} it can only happen that $\sharp(A\cup B)=5,6$.

If $\sharp(A\cup B)=5$, the fact that our list of not-identifiable rank-3 tensors is exhaustive is proved in Propositions \ref{prop:P2P1i} and \ref{prop_P1}.

If $\sharp(A\cup B)=6$ we can firstly use Proposition \ref{6pP2P2Pn} to exclude the all the cases in which $Y$ has at least two factors of dimension 2. Then we start arguing by the number of factors of $Y$.
\\
If $Y$ has 3 factors and it is the product of $\mathbb{P}^1$'s only, then the unique tensors of rank-3 are those of the tangential variety to the Segre variety and this is case \ref{2.} of our theorem. The case of $Y=\mathbb{P}^{2}\times \mathbb{P}^{1}\times\mathbb{P}^1$ is completely covered by Proposition \ref{nonidesempi} together with Examples \ref{caso3} and \ref{caso4} (cf. Corollary \ref{P2P1P1}).
\\
If $Y$ has 4 factors and one of them is a $\mathbb{P}^2$, there is Proposition  \ref{6pP2P1P1P1P1} assuring that $\mathcal{S}(Y,q)$ does not contain two disjoint sets. If $Y$ is a product of four $\mathbb{P}^1$'s we are in case \ref{6.} of our theorem.
\\
The fact that if $Y$ has at least 5 factors then  $\mathcal{S}(Y,q)$ does not contain two disjoint sets is done in Proposition \ref{6pP1P1P1}.

\end{proof}

\section*{Acknowledgements}

The first two authors were partially supported by GNSAGA of INDAM.
We thanks the referee for the useful comments.
%%%%%%%%%%%%%%%%%%%%%%%%%

\end{document}